\newcommand{\Idl}{\ensuremath{\mathrm{Idl}}}      
\newcommand{\RIdl}{\ensuremath{\mathrm{RIdl}}}    
\newcommand{\RSpec}{\ensuremath{\mathrm{RSpec}}}  
\newcommand{\Sh}{\ensuremath{\mathrm{Sh}}}        
\newcommand{\cC}[1]{\ensuremath{\mathcal{C}(#1)}} 
\newcommand{\Power}{\mathcal{P}}                  
\newcommand{\Cat}[1]{\ensuremath{\mathrm{\textbf{#1}}}} 
\newcommand{\Set}{\Cat{Set}\xspace}              
\newcommand{\cov}{\ensuremath{\vartriangleleft}}  
\newcommand{\uS}{\ensuremath{\underline{\Sigma}}} 
\newcommand{\Loc}{\Cat{Loc}\xspace}               
\newcommand{\spe}{\sqsubseteq}                    
\newcommand{\sier}{\mathbb{S}}                    
\newcommand{\ua}{\underline{A}}                   
\newcommand{\scal}{\ensuremath{\mathbb{Q}[i]}}    
\newcommand{\upperR}{\ensuremath{\overleftarrow{[0,\infty]}}}   
\newcommand{\aqftcov}{\mathcal{P}\ltimes\uS}        
\newcommand{\Str}[2]{\ensuremath{\mathbf{Str}_{#1}(#2)}} 
\newcommand{\Mod}[2]{\ensuremath{\mathbf{Mod}_{#1}(#2)}} 
\newcommand{\turndown}[1]{%
  \rotatebox[origin=c]{270}{\ensuremath#1}}       
\newcommand{\twoheaddownarrow}{\turndown{\twoheadrightarrow}}       
\newcommand{\dd}{\twoheaddownarrow}               
\newcommand{\turnround}[1]{%
  \rotatebox[origin=c]{180}{\ensuremath#1}}       
\newcommand{\wi}{\eqslantless}                    
\newcommand{\downset}{\ensuremath{\mathop{\downarrow}}}  
\newcommand{\upset}{\ensuremath{\mathop{\uparrow}}}  
\newcommand{\oc}{\ensuremath{[\mathrm{set}]}}     
\newcommand{\frk}{\mathfrak}
\renewcommand{\coprod}{\turnround{\prod}}
\newcommand{\ev}{\mathrm{ev}}                     
\newcommand{\id}{\mathrm{id}}                     
\newcommand{\opens}{\mathcal{O}}                  
\newcommand{\pt}{\mathrm{pt}}                     
\newtheorem{theorem}{Theorem}
\newtheorem{lemma}[theorem]{Lemma}
\newtheorem{proposition}[theorem]{Proposition}
\newtheorem{definition}[theorem]{Definition}
\begin{document}
 \title{Gelfand spectra in Grothendieck toposes\\
  using geometric mathematics}
 \def\titlerunning{Gelfand spectra in Grothendieck toposes}
 \def\authorrunning{Bas Spitters, Steven Vickers \& Sander Wolters}
 \author{Bas Spitters
   \institute{VALS-LRI, Universit\'e Paris-Sud/INRIA Saclay}
   \email{bas.spitters@inria.fr}
 \and Steven Vickers
   \thanks{Supported by the UK Engineering and Physical Sciences Research Council,
     on the project EP/G046298/1
     ``Applications of geometric logic to topos approaches to quantum theory''.}
   \institute{School of Computer Science, University of Birmingham,\\
     Birmingham, B15 2TT, UK.}
   \email{s.j.vickers@cs.bham.ac.uk}
 \and Sander Wolters
 \thanks{Supported by N.W.O. through project 613.000.811.}
\institute{Radboud University Nijmegen, IMAPP}
   \email{s.wolters@math.ru.nl}
 }
\maketitle
\begin{abstract}
In the (covariant) topos approach to quantum theory by Heunen, Landsman and Spitters,
one associates to each unital C*-algebra $A$ a topos $\mathcal{T}(A)$
of sheaves on a locale and a commutative C*-algebra $\ua$ within that topos.
The Gelfand spectrum of $\ua$ is a locale $\uS$ in this topos,
which is equivalent to a bundle over the base locale.
We further develop this external presentation of the locale $\uS$,
by noting that the construction of the Gelfand spectrum in a general topos
can be described using geometric logic.
As a consequence, the spectrum, seen as a bundle, is computed fibrewise.

As a by-product of the geometricity of Gelfand spectra, we find an explicit external description of the spectrum whenever the topos is a functor category. As an intermediate result we show that locally perfect maps compose, so that the externalization of a locally compact locale in a topos of sheaves over a locally compact locale is locally compact, too.
\end{abstract}

\section{Introduction} \label{sec:intro}

The main subject of this paper is the interplay between geometric logic and topos-theoretic approaches to C*-algebras (motivated by quantum theory).
In particular, we consider the approach of Heunen, Landsman and Spitters \cite{CaspersHLS:IntQLnLS,qtopos,Bohrification_ql,Bohrification},
although some of the ideas and techniques in this paper may turn out to be of interest to the related approach by
Butterfield, Isham and D\"oring~\cite{butterfieldisham1, butterfieldisham2, butterfieldisham4, butterfieldisham3,
DoeringIsham:WhatThingTTFP} as well; see~\cite{Wolters} for a comparison. We are mainly
interested in the spectral object of the topos approach and its construction using geometric logic.

We assume that the reader is familiar with the basics of topos and locale theory.%
\footnote{
  The standard reference~\cite{maclanemoerdijk92} contains much of the needed material.
  In particular, Chapter II of this book demonstrates how sheaves on a topological space can be seen as bundles.
  Chapter III gives an introduction to Grothendieck toposes.
  Section VII.1 contains background information on geometric morphisms.
  Finally Chapter IX gives all background information on locales.
  At certain points in this paper, in particular in Subsection~\ref{sec:functor},
  sheaf semantics is used.
  The relevant background material can be found in Chapter VI of~\cite{maclanemoerdijk92}.
  Another standard reference is the massive work~\cite{Elephant1,Elephant2}.
}
All toposes are assumed to be Grothendieck toposes,
and, in particular, every topos has a natural numbers object (NNO).
In its general form, the theory of Grothendieck toposes is that of bounded toposes over some base topos
$\mathcal{S}$ that embodies the ambient logic.
We shall rarely need to be explicit about $\mathcal{S}$,
but our techniques will be valid for $\mathcal{S}$ an arbitrary elementary topos with NNO and thus have
wide constructive applicability.

The paper is split into two sections.
Section~\ref{sec:geo} gives background information on geometric logic and, more importantly,
the practical impact of the geometric mathematics that develops from it.
For this we can give a first definition as topos-valid constructions on sets
(understood as objects of a topos) which are preserved
by inverse image functors of geometric morphisms.
This is the geometric mathematics of \emph{sets}, as objects of a topos.

More profoundly, we can also consider geometricity of topos-valid constructions on locales,
and much of our ability to do this follows from two results in \cite{joyaltierney84}.
The first provides a localic version of a well known result from point-set topology,
that sheaves over $X$ are equivalent to local homeomorphisms with codomain $X$.
On the one hand, the sheaves are the ``sets'' in the topos $\Sh(X)$ of sheaves,
or the discrete locales -- the frames are the powerobjects.
On the other hand, the local homeomorphisms can be understood as the fibrewise
discrete bundles over $X$, where we understand ``bundle'' in the very general sense of
locale map with codomain $X$.
Applying an inverse image functor $f^{\ast}$ to the sheaf corresponds to
pulling back the bundle along $f$.
Hence geometricity of a construction on the sheaves corresponds to preservation
under pullback of the corresponding construction on the bundles.

The second result in \cite{joyaltierney84} is that internal frames in $\Sh(X)$ are dual
to localic bundles over $X$.
This immediately allows us to extend our definition of geometricity to constructions
on locales, namely as preservation under pullback of the bundle constructions.
Since the fibres are got as pullbacks along points, the geometric mathematics works
fibrewise and provides a fibrewise topology of bundles.
This idea has already been explored in an ad hoc fashion in point-set topology --
see, e.g., \cite{James:FibrewiseTop} --, and the notion of geometricity makes it
much more systematic when one combines point-free topology with toposes.

It should be noted that pullback of a bundle along a map $f$ is \emph{not} achieved
by applying $f^{\ast}$ to the internal frame.
This is already clear in the fibrewise discrete (local homeomorphism) case,
since $f^{\ast}$ does not preserve powerobjects, nor frame structure in general.
\cite{joyaltierney84} define a different functor $f^{\#}$ that transforms frames to frames,
but in practice it is often convenient to bypass the frames altogether and instead
use presentations of them by generators and relations.
\cite{PPExp} shows that applying $f^{\ast}$ to the presentation corresponds to pullback
of the bundle, even though the middle step of presenting a frame is not geometric.

The practical effect of switching to generators and relations is that a locale is described
by means of a geometric theory whose models are the (generalized) points of the locale.
Hence we explicitly describe the points rather than the opens,
though the nature of geometric theories ensures that the topology is described implicitly at the
same time, by presenting a frame.
Geometricity has the effect of restoring the points to point-free topology,
and allows us to define maps $f:Y\to X$ in two very intuitive ways as
geometric transformations:
as a map, $f$ transforms points of $Y$ to points of $X$,
while as a bundle it transforms points of $X$ to locales, the fibres,
and defines the bundle locale $Y$ at the same time.
This is explained in more detail in \cite{Vickers:ContIsGeom}.

This switch of emphasis can be disconcerting if one thinks that a locale \emph{is} its frame.
However, calculating an internal frame is error prone
-- experience shows this even in the simpler case of presheaf toposes --,
and one of the central messages of this paper is that the geometric methods often allow us to avoid
the frame.
On the other hand, sometimes the frame is still needed.
Later on we show how to exploit local compactness to give a geometric calculation of it.
Specifically, we prove a result about exponentiability of objects in a topos,
which entails that if $X$ is a locally compact locale and $\underline{Y}$
is a locally compact locale in $\Sh(X)$, with bundle locale $Y$ over $X$, then $Y$ is locally compact.

Section~\ref{sec:quantum} applies the previous discussion about geometricity to the HLS topos approach of quantum theory.
Since the topos is that of sheaves over a locale, the spectrum can be understood as a bundle
\cite{FRV:Born},
in which each fibre is the spectrum of a commutative C*-algebra,
and the geometric approach will emphasize this.
In particular, we concentrate on the external description of the bundle locale,
which is the phase space of the topos approach.
This phase space is the Gelfand spectrum of a unital commutative C*-algebra, internal to a functor category.
The first two subsections seek a fully geometric description of the construction of this spectrum.
The basic problem is that, geometrically, a C*-algebra has a natural topology and therefore should be
a locale rather than a set.
As far as we know, there is not yet a satisfactory account of Gelfand duality that deals with
the C*-algebra in this way.
\cite{banaschewskimulvey06} defines C*-algebra in a way that is topos-valid but not geometric -- it uses a non-geometric characterization of
completeness.
We show how to replace the complete C*-algebra by a geometric stucture that yields the same spectrum,
and following \cite{coquand05} we quickly reduce to the spectrum of a normal distributive lattice,
a general lattice theoretic way to present compact regular locales.
In the subsection that follows these ideas are used to find an explicit description of the phase space
in question. This is subsequently generalized to a description of Gelfand spectra
of arbitrary unital commutative C*-algebras in functor categories. In the final subsection we briefly
consider generalizing the topos approach to C*-algebras to algebraic quantum field theory.

\section{Geometricity}\label{sec:geo}

This section is divided into four parts.
In Subsection~\ref{sub:geometriclogic} we briefly discuss
geometric logic and associated mathematical constructions.
An important point is that the constructions that can be expressed by geometric mathematics
coincide with the constructions that are preserved when pulled back along the inverse image
functor of a geometric morphism.
Subsection~\ref{sub:geometricmath} shifts attention from geometric logic to geometric mathematics (in particular by using geometric type constructs). We discuss geometricity of topos-valid constructions on sets (in the generalized sense of objects in a topos) and on locales. Subsequently, in Subsection~\ref{sub:toppreshsh} the geometricity of such constructions is further analysed when these constructions are carried out in toposes that are functor categories and in toposes of sheaves on a topological space. Finally, in Subsection~\ref{sub:expo} we discuss exponentiability of locales.

The results of this section are important in two ways.
First, they allow us to talk about the opens, the elements of a frame
(which is itself not a geometric concept), in a geometric way.
Second, it entails that the external description of the Gelfand spectrum investigated in the next section is a locally compact locale.

\subsection{Geometric logic}\label{sub:geometriclogic}

We briefly describe geometric theories and their interpretations in toposes. The discussion is brief because the work in Section~\ref{sec:quantum} concentrates on geometric mathematics, as described in Subsection~\ref{sub:geometricmath}, rather than geometric logic as presented in this subsection. The reader is advised to browse rather than carefully read this subsection, and to consult it at a later stage, if needed.

In this subsection we follow
the discussion in \cite[Section D1]{Elephant1} where a much more detailed presentation is given.
Section 3 of \cite{LocTopSp} also treats this material, with more emphasis on geometric logic.
Another good source is \cite[Chapter X]{maclanemoerdijk92}, but
note that what is called \emph{geometric} there is called \emph{coherent} in \cite{Elephant1,Elephant2} and \cite{LocTopSp}:
the difference is that ``coherent'' forbids infinite disjunctions.
Finally, \cite{Vickers:ContIsGeom} provides an alternative introduction to some of the techniques presented in this section.

The language of geometric logic is that of an infinitary, first-order,
many-sorted predicate logic with equality%
\footnote{
  We shall not explicitly consider the geometric types of e.g.~\cite[Subsection 3.4]{LocTopSp}.
}.
We start with a first-order \textbf{signature} $\Sigma$
(where the notation $\Sigma$ has nothing to do with the spectrum $\Sigma$ in the following
sections).
The signature consists of a set $S$ of \textbf{sorts}, a set $F$ of \textbf{function symbols} and a set $R$
of \textbf{predicate symbols}. Each function symbol $f\in F$ has a type, which is a non-empty finite list of sorts
$A_{1},...,A_{n},B$. We write this as $f:A_{1}\times...\times A_{n}\to B$, in anticipation of the categorical
interpretation of the function symbols. The number $n$ is called the \textbf{arity} of the function symbol.
If the arity
of a function symbol $f$ is $0$, i.e., when its type is a single sort $B$ (we will write $f:1\to B$), $f$ is called a
\textbf{constant}. Each predicate $P\in R$ also has a type $A_{1},...,A_{n}$,
where we also allow the empty list ($n=0$)
as a type. We will write $P\subseteq A_{1}\times...\times A_{n}$, again in anticipation of the categorical
representation. A predicate with arity equal to $0$, written as $P\subseteq 1$, will be called a \textbf{propositional
symbol}.

Given the signature $\Sigma$, a \textbf{context}%
\footnote{This should not be confused with the non-logical notion of context used in the topos
approaches to quantum theory.}
is a finite set of variables, each associated with some sort.
It is customary to write the set as a vector $\vec{x}=(x_{1},...,x_{n})$ of distinct variables.
Using $\Sigma$ and the variables we can define terms and formulae over $\Sigma$ in that context.
\textbf{Terms}, which all have a sort assigned to them, are inductively defined as follows:
\begin{itemize}
\item Each variable $x$ of sort $A$ is a term of sort $A$.
\item If $f:A_{1}\times...\times A_{n}\to B$ is a function symbol, and $t_{1},..., t_{n}$ are terms of sort
$A_{1},...,A_{n}$, then $f(t_{1},...,t_{n})$ is a term of sort $B$. In particular, every constant is a term.
\end{itemize}

We can now construct geometric formulae in that context inductively as follows.
They form the smallest class which is closed under the following clauses:
\begin{enumerate}
\item
  If $R\subseteq A_{1}\times...\times A_{n}$ is a predicate, and $t_{1},...,t_{n}$ are terms of
  sort $A_{1},...,A_{n}$, then $R(t_{1},...t_{n})$ is a formula.
  For the particular case $n=0$, even in the empty context every propositional symbol is a formula.
\item
  If $s$ and $t$ are terms of the same sort, then $s=t$ is a formula.
\item
  Truth $\top$ is a formula. If $\phi$ and $\psi$ are formulae,
  then so is the conjunction $\phi\wedge\psi$.
\item
  Let $I$ be any (index) set and for every $i\in I$, let $\phi_{i}$ be a formula.
  Then $\bigvee_{i\in I}\phi_{i}$ is a formula.
\item
  If the variable $u$ is not in the context $\vec{x}$, and $\phi$ is a formula in context
  $\vec{x}\cup\{u\}$, then $(\exists u)\phi$ is a formula in context $\vec{x}$.
\end{enumerate}
Note that implication, negation and the universal quantifier are not allowed
as connectives in the construction of geometric formulae.

To show the context explicitly
we shall denote a formula or term in context by $\vec{x}.\phi$ or $\vec{x}.t$.
Note that a formula or term does not have to use all the variables in its context -- some may be unused.

We will also consider sequents $\phi\vdash_{\vec{x}}\psi$, where $\phi$ and $\psi$ are geometric formulae in context $\vec{x}$. We will think of the sequent as expressing that $\psi$ is a logical
consequence of $\phi$ in context $\vec{x}$. A \textbf{geometric theory} $\mathbb{T}$ over $\Sigma$ is simply a set of
such sequents $\phi\vdash_{\vec{x}}\psi$.

The next step is to consider interpretations of geometric theories in toposes.
The discussion will be very brief,
but the details can be found in the references stated at the beginning of this subsection.
Let $\mathcal{E}$ be a topos
(actually any category that has finite products would suffice for defining the structures,
and also assuming pullbacks makes the definition of a homomorphism of $\Sigma$-structures nicer).
Given a signature $\Sigma$, a \textbf{$\Sigma$-structure}
$M$ in $\mathcal{E}$ is defined as follows. For every sort $A$ in $\Sigma$ there is an associated object $MA$ in
$\mathcal{E}$. For every function symbol $f:A_{1}\times... A_{n}\to B$ there is an arrow
$Mf \colon MA_{1}\times...\times MA_{n}\to MB$ in $\mathcal{E}$.
A constant $c:1\to B$ is interpreted as an arrow $Mc:1\to MB$, where $1$ denotes the
terminal object of $\mathcal{E}$. A predicate $R$ of type $A_{1}\times... \times A_{n}$ is interpreted as a monic arrow
$MR\rightarrowtail MA_{1}\times...\times MA_{n}$.

If $M$ and $N$ are $\Sigma$-structures in $\mathcal{E}$, then a \textbf{homomorphism of $\Sigma$-structures}
$h$ is defined as follows. For each sort $A$ in $\Sigma$ there is an arrow $h_{A}: MA\to NA$. For each function symbol
$f:A_{1}\times...\times A_{n}\to B$, we demand $h_{B}\circ Mf=Nf\circ(h_{A_{1}}\times...\times h_{A_{n}})$. If
$R\subseteq A_{1}\times...\times A_{n}$ is a predicate, then we demand that $MR\subseteq(h_{A_{1}}\times...\times
h_{A_{n}})^*(NR)$ holds as subobjects of $MA_{1}\times...\times MA_{n}$, where the right hand side means pulling the
monic arrow $NR\rightarrowtail NA_{1}\times...\times NA_{n}$ back along $h_{A_{1}}\times...\times h_{A_{n}}$.

The $\Sigma$-structures in a topos $\mathcal{E}$ and their homomorphisms define a category
$\Str{\Sigma}{\mathcal{E}}$.
Let $F:\mathcal{E}\to\mathcal{F}$ be a functor between toposes. $F$ need not come
from a geometric morphism, but we do assume it to be left exact.
Any such functor
induces a functor $\Str{\Sigma}{F}: \Str{\Sigma}{\mathcal{E}}\to\Str{\Sigma}{\mathcal{F}}$ in a straightforward way.

The next step is to introduce models of a geometric theory $\mathbb{T}$ over $\Sigma$.
In order to do this we
need to interpret terms and formulae-in-context for a $\Sigma$-structure in $\mathcal{E}$. This can be done inductively,
in much the same way as in using the internal language of the topos. Details can be found in
\cite[Subsection D1.2]{Elephant1}. For a given $\Sigma$-structure $M$, the end result is that a formula-in-context $\vec{x}.\phi$, where
$\vec{x}=(x_{1},...,x_{n})$ are variables with associated sorts $A_{1},...,A_{n}$, is interpreted as a subobject
$M(\vec{x}.\phi)$ of $MA_{1}\times...\times MA_{n}$. A $\Sigma$-structure $M$ in a topos $\mathcal{E}$ is called a
\textbf{model} for a geometric theory $\mathbb{T}$ if for every sequent $\phi\vdash_{\vec{x}}\psi$ in $\mathbb{T}$ we
have $M(\vec{x}.\phi)\subseteq M(\vec{x}.\psi)$, where we view the interpretation of the formulae as subobjects of
$MA_{1}\times...\times MA_{n}$.
We write $\Mod{\mathbb{T}}{\mathcal{E}}$ for the full subcategory of $\Str{\Sigma}{\mathcal{E}}$
whose objects are the models of $\mathbb{T}$.

Although any left-exact functor $F:\mathcal{E}\to\mathcal{F}$ induces a functor
$\Str{\Sigma}{F}$ between the associated categories of $\Sigma$-structures,
in general $F$ does not preserve those ``geometric constructions'' used to interpret formulae,
and consequently $\Str{\Sigma}{F}$ does not restrict to model categories.
On the other hand, if $F=f^{\ast}$, the inverse image of a geometric morphism $f$,
then $\Str{\Sigma}{F}$ does restrict to a functor
$\Mod{\mathbb{T}}{F}: \Mod{\mathbb{T}}{\mathcal{E}}\to\Mod{\mathbb{T}}{\mathcal{F}}$.
A nice proof of the restriction is given in \cite[Section X.3]{maclanemoerdijk92}.
Although only finite joins are considered there, this proof is in particular
interesting because it also treats (non-geometric) formulae that use implication
and the universal quantifier, and shows that models of a theory using this additional structure
are only preserved by the inverse image functor of a
geometric morphism when the geometric morphism is open.
In general, we shall describe a topos-valid construction as \textbf{geometric}
if it is preserved (up to isomorphism) by inverse image functors.

One important kind of mathematical structure that can be expressed by
geometric logic is finitary algebraic theories, such as monoids, (Abelian) groups, rings and lattices.
Their axioms all take the shape $\top\vdash_{\vec{x}}s=t$ where $s$ and $t$ are terms.
These theories are clearly geometric, as are
many-sorted algebraic theories such as pairs of rings and modules.
The theory of partially ordered sets, but also ordered groups or ordered rings, is geometric.
More general examples are local rings, finite sets and (small) categories. See
\cite[Example D1.1.7]{Elephant1} for more details.

Sometimes it can be hard to see whether some given structure is geometric.
Just because it is presented by a theory that uses $\forall$ or $\Rightarrow$ in formulae,
does not mean that there is no equivalent geometric theory that describes the structure.

One example of structure that is definitely not geometric is that of frames:
complete lattices with binary meet distributing over arbitrary joins, and homomorphisms
preserving joins and finite meets.
This will be important in Section~\ref{sec:quantum},
where we will look at spectra of commutative unital C*-algebras in toposes:
the spectrum is described by a frame.
The non-geometricity of frames is shown by the fact that the inverse image functor
of a geometric morphism does not necessarily map frames to frames.
C*-algebras provide another example: inverse image functors need not map C*-algebras to C*-algebras.
Still, we can say a lot about the spectrum of a commutative C*-algebra by paying attention to
geometric constructions, as we will see in Subsection~\ref{sub:bohr}.

Although there is not a single geometric theory whose models are the frames,
they are closely related to the important class of \emph{propositional} geometric theories --
those with no declared sorts so that the signature consists entirely of propositional symbols.
Each such theory can be straightforwardly translated into a presentation of a frame
by generators and relations, with propositional symbols and axioms
becoming the generators and relations,
and then the points of the corresponding locale are equivalent to the models of the theory.

Defining the notion of geometric theory and their models in toposes is just the start of
an introduction to geometric logic.
The next step would be to discuss the rules of inference. However, for the purposes of this paper we have treated enough and the reader interested
in the rules of inference is directed to \cite[Section D1.3]{Elephant1}.

More important for us will be the way that geometric theories are used to describe the
\textbf{generalized points} of a Grothendieck topos $\mathcal{F}$, the geometric morphisms
$\mathcal{E}\to \mathcal{F}$ to $\mathcal{F}$ from another topos.
Whenever we refer to \textbf{points} we mean \textbf{generalized} points in the above sense, and not the smaller class of \textbf{global points}, i.e., points where $\mathcal{E}=\Set$.
A key fact is that for each $\mathcal{F}$ there is a geometric theory $\mathbb{T}$ such that
the points of $\mathcal{F}$ are equivalent to the models of $\mathbb{T}$ in $\mathcal{E}$.
$\mathcal{F}$ is said to \textbf{classify} the theory $\mathbb{T}$.
Conversely, every geometric theory has a classifying topos.

The same techniques can also be applied to \textbf{localic} toposes, i.e., those of the form
$\Sh X$, where $X$ is a locale. They classify propositional geometric theories.

The geometric approach will continually ask -- What are the points? What theory does it classify?
This will come as a shock to those used to thinking of the points as insufficient -- as,
indeed, the global points are -- and calculating concretely with the topos or the frame.

\subsection{Geometric mathematics} \label{sub:geometricmath}

In Subsection~\ref{sub:geometriclogic} we gave formal definitions of geometric theories and their
interpretation in toposes.
In what follows, however, we shall make little use of formal geometric theories.
This is not just for the usual reason, that in practical mathematics it is tedious to work formally.
We shall also be extensively using type-theoretic features of the logic that have not been fully formalized.
Our informal treatment will be what we describe as ``geometric mathematics''.

It has two levels. A geometric mathematics of ``sets'' (understood as objects of a topos) consists of those
structures, constructions and theorems that are preserved by inverse image parts of geometric morphisms.
The next level is a geometric mathematics of locales, and we shall explain geometricity here
in terms of bundles.

A key property of geometric logic, distinguishing it from finitary first-order logics,
is that the infinitary disjunctions allow us to characterize some constructions (necessarily geometric)
uniquely up to isomorphism by using geometric structure and axioms.
These constructions include $\mathbb{N}, \mathbb{Z}$ and $\mathbb{Q}$ (though not $\mathbb{R}$ and
$\mathbb{C})$, free algebras more generally, colimits and finite limits.
Using these constructions informally, but in the knowledge that they could be formalized within
geometric theories, allows us to deal with geometric theories and their models in a way
that is closer to a ``geometric mathematics'' than to logic.
We shall do this extensively in the rest of the paper.

To explain how this extends to constructions on locales,
we shall need to recall the equivalence between internal locales and bundles.
For simplicity we assume we are working in a topos $\mathcal{E}=\Sh(X)$,
where $X$ is a locale in $\Set$,%
\footnote{
  -- or an arbitrary base topos $\mathcal{S}$, elementary with NNO.
  We do not assume classical sets.
}
though the idea works more generally.
Then the category $\mathbf{Loc}_{\Sh(X)}$ of locales in $\Sh(X)$
(defined as the dual of the category of internal frames) is
equivalent to the slice category $\mathbf{Loc}/X$,
of locales in $\Set$ over $X$
(\cite{joyaltierney84} or \cite[Section C1.6]{Elephant1}).
A locale $\underline{Y}$ in the topos $\mathcal{E}=\Sh(X)$ can be represented by a locale map $p:Y\to X$.
We will call a map a \textbf{bundle} when we view it in this way,
and think of it as a family of locales, the fibres, parametrized by (generalized) points of $X$.
Let $x:X'\to X$ be a point of $X$, and consider the pullback diagram
\[
  \xymatrix{ X'\times_{X}Y \ar[d] \ar[r] & Y \ar[d]^{p}\\
  X' \ar[r]_{x} & X}
\]
If $X' = 1$, so $x$ is an ordinary (global) point of $X$, then the pullback $X'\times_{X}Y$ is the fibre of
$p$ over $x$.
When we generalize to arbitrary $X'$ then we may think of the pullback as the generalized fibre of $p$
over the generalized point $x$.

If we define toposes $\mathcal{E}=\Sh(X)$ and $\mathcal{F}=\Sh(X')$, then $x$ becomes a geometric
morphism $x: \mathcal{F}\to\mathcal{E}$.
As we have noted earlier, the inverse image part $x^{*}: \mathcal{E}\to\mathcal{F}$
does not preserve frame structure.
Nonetheless, there is a functor
$x^{\#}:\mathbf{Frm}_{\mathcal{E}}\to\mathbf{Frm}_{\mathcal{F}}$
whose action on the corresponding bundles is by pullback along $x$.
Moreover it is a right adjoint to
$x_{\ast}: \mathbf{Frm}_{\mathcal{F}}\to\mathbf{Frm}_{\mathcal{E}}$,
the direct image part of $x$, which,
unlike $x^{\ast}$, does preserve frames.

For each object $U$ in a topos $\mathcal{E}$ there is a corresponding \textbf{discrete locale}
whose frame is the powerobject $\Power U$.
The corresponding bundles are the \'etale bundles, or local homeomorphisms (\cite[Chapter II]{maclanemoerdijk92}),
whose fibres are normally called stalks.
If $F$ is a geometric morphism with codomain $\mathcal{E}$ then
$f^{\#}(\Power U)\cong \Power f^{*}(U)$,
and it follows that applying $f^{*}$ corresponds to pulling back the bundle of the discrete locale.
Our notion of geometricity with regard to constructions on objects of toposes consisted of  preservation under
inverse image functors. If we translate the construction into one on the bundles, we see that
geometricity comes to mean preservation under pullback.

In this form we have a notion of geometricity that can also be understood with regard to locales,
once they are interpreted as bundles.
Another way to say the same thing is that the mathematics works fibrewise (for generalized fibres),
since fibres are just pullbacks of bundles.

We often make use of the following observation. Let $p:Y \to X$ be an arbitrary localic bundle: we take it that there is an internal locale
in $\Sh X$ corresponding to it.
If $y: W \to Y$ is a (generalized) point of $Y$, then we obtain a point $x = py$ of X
and a map $y': W \to x^{*}Y$ to the pullback $x^{*}Y$:
\[
  \xymatrix{
    W \ar@{=}[ddr] \ar@/^/[drr]^y \ar[dr]^{y'}  \\
      & x^{\ast}Y \ar[d] \ar[r]    & Y \ar[d]^p               \\
      & W   \ar[r]^x               & X
  }
\]
Thus we may describe the points of $Y$ as the pairs $(x,y')$, where $x$ is a point of $X$
and $y'$ is a point of the fibre $x^{*}Y$.

This description is geometric, because the structure and properties of the diagram are preserved by any
change of base (pullback along maps into $W$).
It follows that, for a geometric theory for $Y$, we need one whose models are the points $(x,y')$.

We shall also need to consider the specialization order $\spe$ on $Y$,
and for this the analysis needs to be more refined.
The straightforward part is that within a single fibre $x^{*}Y$
we have $y'_1 \spe y'_2$ iff $(x,y'_1)\spe(x,y'_2)$.
The $\Rightarrow$ direction follows immediately from the map $x^{*}Y\to Y$,
while the converse follows from the fact that in $\Loc$ the pullback property determines
the hom-\emph{po}sets into a fibre, as well as the homsets.
Also, if $(x_1,y'_1)\spe(x_2,y'_2)$ then $x_1\spe x_2$ by the map $p$.

For a more precise analysis of the specialization between fibres we shall assume
that $p$ is an \emph{opfibration} in the 2-categorical sense.
(A similar analysis applies when $p$ is a fibration.)%
\footnote{
  There are also topological notions of fibration, and they are different from the one we use.
}
For a definition, see \cite[B4.4.1]{Elephant1}.
However, easier to understand in our situation is to define the opfibrational
structure generically, as in \cite{FRV:Born},
which also discusses other ways in which fibrations and opfibrations relate to
the topos approaches to quantum theory.
Consider the two maps $X^i: X^\sier \to X$ where $i$ is either of the two principal
points $\bot,\top$ of $\sier$.
We have obvious maps $Y^\sier \to (X^i)^{*}Y$,
defined on $(x_\bot,y'_\bot,x_\top,y'_\top)$ by forgetting one of the $y'_i$s.
Then $p$ is an \textbf{opfibration} if the map $Y^\sier \to (X^\bot)^{*}Y$
has a left adjoint over $(X^\bot)^{*}Y$. We can write it in the form
\[
   (x_\bot, x_\top, y'_\bot) \mapsto (x_\bot, y'_\bot, x_\top, r_{x_\bot x_\top}(y'_\bot))\text{,}
\]
thus focusing attention on the \textbf{fibre map} $r_{x_\bot x_\top}: x_\bot^{*}Y \to x_\top^{*}Y$.

\begin{proposition}\label{prp:opfibspec}
  Let $p: Y \to X$ be a locale map that is a 2-categorical opfibration.
  Then $(x_1,y'_1) \spe (x_2,y'_2)$ in $Y$ iff $x_1 \spe x_2$ in $X$ and
  $r_{x_1 x_2}(y'_1) \spe y'_2$ in $x_2^{*}Y$.
\end{proposition}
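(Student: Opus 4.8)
The plan is to work entirely with (generalized) points and the poset-enrichment of $\Loc$, reading everything off from the generic description of the opfibration structure. Recall that the points of $Y^\sier$ are exactly the specialization pairs $(x_1,y'_1)\spe(x_2,y'_2)$ of $Y$; that the points of $(X^\bot)^{*}Y$ are the triples $(x_\bot,x_\top,y'_\bot)$ with $x_\bot\spe x_\top$ in $X$ and $y'_\bot$ a point of $x_\bot^{*}Y$; and that the comparison map $q\colon Y^\sier\to(X^\bot)^{*}Y$ acts by $(x_\bot,y'_\bot,x_\top,y'_\top)\mapsto(x_\bot,x_\top,y'_\bot)$, i.e.\ by forgetting $y'_\top$. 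By hypothesis $q$ has a left adjoint $\ell$ over $(X^\bot)^{*}Y$, given on points by $\ell(x_\bot,x_\top,y'_\bot)=(x_\bot,y'_\bot,x_\top,r_{x_\bot x_\top}(y'_\bot))$; since $q\ell=\id$ the unit is an identity and the whole force of the adjunction is carried by the counit $\epsilon\colon\ell q\Rightarrow\id_{Y^\sier}$, a $2$-cell that is vertical over $(X^\bot)^{*}Y$. Throughout I use that a $2$-cell $f\Rightarrow g$ in $\Loc$ is precisely a specialization inequality $f\spe g$, and that every locale map preserves $\spe$ (composition being monotone in the hom-posets of $\Loc$).

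For the direction ($\Leftarrow$), assume $x_1\spe x_2$ and $r_{x_1 x_2}(y'_1)\spe y'_2$. The inequality $x_1\spe x_2$ gives a point $(x_1,x_2)$ of $X^\sier$, hence a point $(x_1,x_2,y'_1)$ of $(X^\bot)^{*}Y$. Applying $\ell$ produces the point $(x_1,y'_1,x_2,r_{x_1 x_2}(y'_1))$ of $Y^\sier$, which by the description of the points of $Y^\sier$ is precisely a witness of $(x_1,y'_1)\spe(x_2,r_{x_1 x_2}(y'_1))$ in $Y$. On the other hand the hypothesis $r_{x_1 x_2}(y'_1)\spe y'_2$ lives inside the single fibre $x_2^{*}Y$, so by the within-fibre comparison already established it yields $(x_2,r_{x_1 x_2}(y'_1))\spe(x_2,y'_2)$. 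Transitivity of $\spe$ then gives $(x_1,y'_1)\spe(x_2,y'_2)$.

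For the direction ($\Rightarrow$), suppose $(x_1,y'_1)\spe(x_2,y'_2)$, i.e.\ we are given a point $w=(x_1,y'_1,x_2,y'_2)$ of $Y^\sier$. Composing with $p$ shows $x_1\spe x_2$, as already noted. Whiskering the counit with $w$ gives a $2$-cell $\ell q(w)\Rightarrow w$, that is $\ell q(w)\spe w$ in the hom-poset of generalized points of $Y^\sier$, where $\ell q(w)=(x_1,y'_1,x_2,r_{x_1 x_2}(y'_1))$. Applying the evaluation map $Y^\top\colon Y^\sier\to Y$ at $\top$ (the analogue for $Y$ of $X^\top$, realized as $Y^\sier\to(X^\top)^{*}Y\to Y$), which is monotone, yields $(x_2,r_{x_1 x_2}(y'_1))\spe(x_2,y'_2)$ in $Y$. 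Both sides lie over $x_2$, so the within-fibre comparison turns this into $r_{x_1 x_2}(y'_1)\spe y'_2$ in $x_2^{*}Y$, as required.

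The main obstacle is essentially bookkeeping the order conventions: one must verify that the opfibration's left adjoint really contributes its counit in the direction $\ell q\spe\id$ (rather than the reverse), so that evaluation at $\top$ produces $r_{x_1 x_2}(y'_1)\spe y'_2$ and not the opposite inequality. This is exactly the point where the choice of $\ell$ as a \emph{left} (rather than right) adjoint, together with the convention relating $2$-cells of $\Loc$ to $\spe$, must be pinned down carefully. Everything else reduces to the two facts used above — that locale maps are monotone for $\spe$, and that over a fixed point the specialization order of $Y$ restricts to that of the fibre.
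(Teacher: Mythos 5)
Your proof is correct and takes essentially the same route as the paper's: the ($\Leftarrow$) direction is exactly the paper's observation that the left adjoint lands in $Y^\sier$, giving $(x_1,y'_1)\spe(x_2,r_{x_1 x_2}(y'_1))\spe(x_2,y'_2)$, and the ($\Rightarrow$) direction is the paper's use of the adjunction, namely the counit inequality $(x_1,y'_1,x_2,r_{x_1 x_2}(y'_1))\spe(x_1,y'_1,x_2,y'_2)$, followed by projecting at $\top$ and restricting to the fibre. You simply make explicit the bookkeeping the paper leaves implicit (the point descriptions of $Y^\sier$ and $(X^\bot)^{*}Y$, the triviality of the unit, and the within-fibre comparison), all of which is accurate.
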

\begin{proof}
  $\Rightarrow$: We have already noted that $x_1 \spe x_2$.
  Using the adjunction we find
  $(x_1,y'_1,x_2,r_{x_1 x_2}(y'_1))\spe(x_1,y'_1,x_2,y'_2)$
  and the conclusion follows.

  $\Leftarrow$: Using the fact that the adjoint maps into $Y^\sier$, we know
  $(x_1,y'_1) \spe (x_2,r_{x_1 x_2}(y'_1)) \spe (x_2,y'_2)$.
\end{proof}

There is a dual result if $p$ is a fibration, giving fibre maps contravariantly
$r_{x_\top x_\bot}: x_\top^{*}Y \to x_\bot^{*}Y$.
Then $(x_1,y'_1) \spe (x_2,y'_2)$ in $Y$ iff $x_1 \spe x_2$ in $X$ and
  $y'_1 \spe r_{x_2 x_1}(y'_2)$ in $x_1^{*}Y$.

If a locale property or construction is understood in terms of frames,
then its geometricity itself is likely to be non-obvious, because frames are non-geometric.
One potential way round this is to check preservation by the functors $f^{\#}$,
but this can be difficult without concrete knowledge of $f^{\#}$.
A more practical approach is usually to work with frame presentations by generators and relations.
A frame $\opens X$ that is presented by a set of generators $G$ and a set of relations $R$ will be denoted as $\opens
X\cong\mathbf{Fr}\langle G |R\rangle$. For the reader unfamiliar with frame presentations, see e.g. \cite[Chapter 4]{TVL} for an
introduction.
A general but primitive class of frame presentations is that of GRD-systems~\cite[Section 5]{PPExp},
but there are various more restricted kinds that are better adapted to particular problems.
Of whatever kind, the central property is that the relation between the presentation and the corresponding
bundle is geometric -- that is, preserved by bundle pullback.

\subsection{Toposes of presheaves and of sheaves} \label{sub:toppreshsh}

For the topos approach(es) to quantum theory we are in particular interested in toposes that
are either  functor categories or sheaves on a locale. In this subsection we consider geometricity for constructions on sets and locales in such toposes. As before, we are considering topos-valid constructions, so sets and their elements are understood in their generalized sense.

For functor categories, the fact that geometric constructions are preserved under the inverse image functor of
any geometric morphism entails the following lemma.

\begin{lemma}{(\cite[Corollary D1.2.14(i)]{Elephant1})} \label{lem:functormodel}
Let $\mathbb{T}$ be a geometric theory over a signature $\Sigma$ and let $\mathcal{C}$ be any small category.
A $\Sigma$-structure $M$ in the topos $[\mathcal{C},\Set]$ is a $\mathbb{T}$-model iff for every object $C\in\mathcal{C}_{0}$ the
$\Sigma$-structure $ev_{C}(M)$ is a $\mathbb{T}$-model in $\Set$.
Here $ev_{C}:[\mathcal{C},\Set]\to\Set$ denotes the functor that evaluates at the object $C$.
There is an isomorphism
\begin{equation*}
\Mod{\mathbb{T}}{[\mathcal{C},\Set]}\cong[\mathcal{C},\Mod{\mathbb{T}}{\Set}].
\end{equation*}
\end{lemma}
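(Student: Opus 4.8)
The plan is to exploit the fact, recorded just above, that inverse image functors of geometric morphisms preserve the interpretation of geometric formulae and hence restrict to categories of models. First I would identify each $ev_{C}$ with such an inverse image. The functor $C\colon\mathbf{1}\to\mathcal{C}$ picking out the object $C$ induces a geometric morphism $\Set\cong[\mathbf{1},\Set]\to[\mathcal{C},\Set]$ whose inverse image is precomposition with $C$, that is, precisely $ev_{C}$. By the discussion preceding the lemma, $\Str{\Sigma}{ev_{C}}$ therefore restricts to models: if $M$ is a $\mathbb{T}$-model in $[\mathcal{C},\Set]$ then $ev_{C}(M)$ is a $\mathbb{T}$-model in $\Set$ for every $C$. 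This already settles the ``only if'' direction and the easy half of the stated isomorphism.

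For the converse I would combine two standard features of presheaf toposes. Since $ev_{C}=C^{\ast}$ is an inverse image functor it preserves all the geometric operations used to build the subobject $M(\vec{x}.\phi)$ inductively -- finite limits (for terms and for $=$), finite intersections (for $\wedge$), image factorizations (for $\exists$) and arbitrary unions (for $\bigvee$) -- so there is a canonical isomorphism $ev_{C}(M(\vec{x}.\phi))\cong(ev_{C}M)(\vec{x}.\phi)$ as subobjects of $ev_{C}(MA_{1})\times\cdots\times ev_{C}(MA_{n})$. Secondly, because limits and subobjects in a functor category are computed pointwise, the family $\{ev_{C}\}_{C\in\mathcal{C}_{0}}$ jointly reflects containments of subobjects: for $S,T\rightarrowtail P$ one has $S\subseteq T$ iff $ev_{C}(S)\subseteq ev_{C}(T)$ for every $C$. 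Putting these together, a sequent $\phi\vdash_{\vec{x}}\psi$ holds in $M$, i.e.\ $M(\vec{x}.\phi)\subseteq M(\vec{x}.\psi)$, iff $(ev_{C}M)(\vec{x}.\phi)\subseteq(ev_{C}M)(\vec{x}.\psi)$ for all $C$, i.e.\ iff the sequent holds in each $ev_{C}(M)$; quantifying over the sequents of $\mathbb{T}$ gives the biconditional.

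For the isomorphism of categories I would unwind the definition of a $\Sigma$-structure in $[\mathcal{C},\Set]$ and observe that it is exactly a functor $\mathcal{C}\to\Str{\Sigma}{\Set}$: a choice of presheaf $MA$ per sort is a functor $\mathcal{C}\to\Set$, each function symbol is interpreted by a natural transformation, each predicate by a pointwise subobject, and evaluation at $C$ assembles these data into the $\Sigma$-structure $ev_{C}(M)$, naturally in $C$. This correspondence is functorial and bijective on homomorphisms (a homomorphism of $\Sigma$-structures in $[\mathcal{C},\Set]$ being the same as a natural transformation of the associated functors), so $\Str{\Sigma}{[\mathcal{C},\Set]}\cong[\mathcal{C},\Str{\Sigma}{\Set}]$. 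By the model criterion just established, $M$ lies in $\Mod{\mathbb{T}}{[\mathcal{C},\Set]}$ iff the corresponding functor factors through $\Mod{\mathbb{T}}{\Set}$, and hence the isomorphism restricts to $\Mod{\mathbb{T}}{[\mathcal{C},\Set]}\cong[\mathcal{C},\Mod{\mathbb{T}}{\Set}]$.

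I expect the only genuinely delicate step to be the commutation isomorphism $ev_{C}(M(\vec{x}.\phi))\cong(ev_{C}M)(\vec{x}.\phi)$, since it is here that one truly uses that $ev_{C}$ is geometric rather than merely left exact: left exactness alone handles terms and $\wedge$, but the clauses for $\exists$ and for the infinitary $\bigvee$ require preservation of images and of arbitrary unions, which is exactly what the inverse image part of a geometric morphism supplies. Everything else is a routine bookkeeping of the pointwise structure of the functor category.
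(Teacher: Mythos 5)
Your proposal is correct and follows essentially the same route as the paper, which only sketches the argument and cites \cite[Corollary D1.2.14(i)]{Elephant1}: the ``only if'' direction via recognizing $ev_{C}$ as the inverse image of a (point-like) geometric morphism, and the isomorphism of model categories via identifying $\Sigma$-structures in $[\mathcal{C},\Set]$ with functors $\mathcal{C}\to\Str{\Sigma}{\Set}$. Your filling-in of the ``if'' direction --- commutation of $ev_{C}$ with the interpretation of geometric formulae plus the pointwise computation of subobjects, so that the family $\{ev_{C}\}$ jointly reflects containments --- is exactly the standard argument behind the cited result (and matches the paper's remark that the points $\mathcal{Y}(C)$ suffice).
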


In this lemma the ``only if" part follows from the fact that $ev_{C}$ is the inverse image part of a geometric morphism.
The observation that we have an isomorphism of categories of models uses the fact that a homomorphism of
$\Sigma$-structures in $[\mathcal{C},\Set]$ can be identified with a natural transformation between the
$\Sigma$-structures, viewed as functors $\mathcal{C}\to\Str{\Sigma}{\Set}$.

\cite[Corollary D1.2.14(ii)]{Elephant1} 
shows that for a spatial locale $X$,
it suffices to check the fibrewise nature for fibres over the global points.
Let $\mathbb{T}$ be a geometric theory over a signature $\Sigma$.
Then a $\Sigma$-structure $M$ in the topos $\Sh(X)$ is a $\mathbb{T}$-model
iff, for each global point $x$, the fibre $x^{\ast}(M)$ is a $\mathbb{T}$-model in $\Set$.
Unfortunately, from a geometric point of view, spatiality is an uncommon property that often depends
on classical reasoning principles.
Lemma~\ref{lem:functormodel} is geometrically justified, because the Yoneda embedding $\mathcal{Y}$
already provides enough points of the form $\mathcal{Y}(C)$.

If the geometric theory $\mathbb{T}$ in Lemma~\ref{lem:functormodel} is classified by the topos $\mathcal{E}$, then the lemma
tells us that geometric morphisms from $[\mathcal{C},\Set]$ to $\mathcal{E}$ are equivalent to
$\mathcal{C}$-indexed diagrams of points of $\mathcal{E}$.
In fact, this can be used to show that $[\mathcal{C},\Set]$ is exponentiable as a topos,
with $\mathcal{E}^{[\mathcal{C},\Set]}$ classifying the theory of
$\mathcal{C}$-indexed diagrams of points of $\mathcal{E}$ (see e.g.~\cite{Johnstone/Joyal}).

We should also ask what are the geometric morphisms \emph{to} $[\mathcal{C},\Set]$,
i.e., its points.

\begin{definition}\label{defn:flatPresheaf}
  Let $\mathcal{C}$ be a small category, and $F:\mathcal{C}^{op}\to \Set$ a presheaf.
  $F$ is \textbf{flat} if it has the following properties
  \begin{enumerate}
  \item
    For some $C\in\mathcal{C}_{0}$, $F(C)$ is inhabited.
  \item
    If $x_{i}\in F(C_{i})$ ($i=1,2$), then there are morhisms $f_{i}:C_{i}\to D$
    in $\mathcal{C}$, and $y\in F(D)$, such that $x_{i}=F(f_{i})(y)$.
  \item
    If $f,g:C\to D$ in $\mathcal{C}$ and $y\in F(D)$ with
    $F(f)(y) = F(g)(y)$, there is is some morphism $h:D\to E$ such that $hf = hg$,
    and some $z\in F(E)$ such that $y = F(h)(z)$.
  \end{enumerate}
\end{definition}

The theory of flat presheaves over $\mathcal{C}$ is geometric, and its models are equivalent to the points of
$[\mathcal{C},\Set]$ (see e.g.~\cite{LocTopSp}).

The role of the flatness conditions becomes clearer if one considers the \textbf{Grothendieck construction},
which turns a presheaf $F$ into a category $\int F$, the so-called \textbf{category of elements} of $F$.
Its objects are pairs $(C,u)$ with $u\in F(C)$, and a morphism
from $(C,u)$ to $(D,v)$ is an arrow $f: C\to D$ such that $u = F(f)(v)$.
Then $F$ is flat iff $\int F$ is filtered, and in that case $F$ can be thought of as a filtered
diagram of representable presheaves $\mathcal{Y}(C)$ (which are themselves flat).
In fact, as point of $[\mathcal{C},\Set]$, $F$ is a filtered colimit of representables.

We shall mostly use this in the case where $\mathcal{C}$ is a poset $P$.

\begin{lemma}
For a poset $P$, flat presheaves $F:P^{op}\to\Set$ correspond up to isomorphism with ideals of $P$.
\end{lemma}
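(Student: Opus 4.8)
The plan is to show that a flat presheaf $F\colon P^{\mathrm{op}}\to\Set$ over a poset is, up to isomorphism, nothing more than its \emph{support}
\[
  U = \{\, p\in P : F(p)\text{ is inhabited}\,\}\text{,}
\]
and that flatness makes $U$ precisely an ideal. Throughout I would exploit that in a poset there is at most one morphism between any two objects: a relation $q\le p$ yields, via $F\colon P^{\mathrm{op}}\to\Set$, a canonical restriction $F(p)\to F(q)$, and any two parallel arrows coincide.

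First I would pin down the fibres, which I expect to be the crux. Since parallel arrows in $P$ are equal, condition~(3) of Definition~\ref{defn:flatPresheaf} holds automatically (take $h=\id$ and $z=y$). Applying condition~(2) with $C_1=C_2=C$ to a pair $x_1,x_2\in F(C)$ produces arrows $f_1,f_2\colon C\to D$ and $y\in F(D)$ with $x_i=F(f_i)(y)$; but $f_1=f_2$, whence $x_1=x_2$. Thus every $F(C)$ is a \emph{subsingleton}. This is the structural heart of the statement, and the genuine subtlety is reading it constructively: ``subsingleton'' means any two elements are equal, ``inhabited'' means we are handed an element, and the support $U$ is the subobject of $P$ cut out by inhabitedness of the fibre, not a decidable subset.

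Next I would check that $U$ is an ideal. It is downward closed purely by functoriality: if $p\in U$ and $q\le p$, the restriction $F(p)\to F(q)$ carries an element of $F(p)$ to one of $F(q)$, so $q\in U$ (flatness is not even used here). It is inhabited by condition~(1). It is directed by condition~(2): given $C_1,C_2\in U$, choose $x_i\in F(C_i)$; the witness $D$ satisfies $C_1,C_2\le D$ and has $F(D)$ inhabited, so $D\in U$ is an upper bound. Hence $U$ is an inhabited, down-closed, up-directed subset, i.e. an ideal.

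Conversely, to each ideal $I$ I would assign the subterminal presheaf $F_I$ — a subobject of the terminal presheaf $1$ — whose fibre over $p$ is inhabited exactly when $p\in I$; down-closure of $I$ supplies the required restriction maps, so $F_I$ is a well-defined presheaf, and this description is manifestly constructive, with no case split on decidability of membership in $I$. The flatness conditions then read off directly: (1) is inhabitedness of $I$, (2) is directedness, and (3) is automatic. Finally I would verify the two assignments are mutually inverse up to isomorphism: the subsingleton property realises $F$ itself as a subobject of $1$, giving a canonical isomorphism $F\cong F_U$, while $U(F_I)=I$ on the nose. Since a natural transformation $F_I\to F_J$ exists — and is then unique — precisely when $I\subseteq J$, this bijection on isomorphism classes even refines to an order isomorphism onto $\Idl(P)$, so the category of flat presheaves over $P$ is equivalent to the poset of ideals.
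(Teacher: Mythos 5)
Your proposal is correct and follows essentially the same route as the paper: you show each fibre is a subsingleton via flatness condition~(2) and the uniqueness of parallel arrows in a poset, identify the support with an ideal (down-closure from functoriality, inhabitedness and directedness from conditions~(1) and~(2), condition~(3) being automatic), and invert via the subterminal presheaf associated to an ideal. The paper phrases this through the category of elements $\int F$ rather than the support directly, but for subsingleton-valued presheaves these are the same thing, and your write-up is in fact somewhat more explicit than the paper's about verifying the two assignments are mutually inverse.
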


\begin{proof}
Let $F$ be a flat presheaf, and $p\in P$. By Definition~\ref{defn:flatPresheaf}(2) $F(p)$ contains at most one element. By this observation, we identify the set of objects of $\int F$ with a subset $I\subseteq P$, where $p\in I$ iff there exists an (automatically unique) element $x\in F(p)$, i.e., $(p,x)\in\int F$. Under the identification of $\int F$ with $I$, the arrows of $\int F$ translate to the order relation of $P$, restricted to $I$. By conditions (1) and (2) of Definition~\ref{defn:flatPresheaf}, $I$ is non-empty and upward directed. Condition (3) automatically holds for posets. The set $I$ is downward closed as $F$ is a presheaf. For a flat functor $F$, we have identified the category $\int F$ with an ideal $I$ of $P$. Conversely, to each ideal $I$ of $P$, we can associate a flat functor $F$, which has $I$ as its category of elements.
\end{proof}

Then the flat presheaves are simply the ideals of $P$, and we see that the functor topos $[P,\Set]$ is the topos of sheaves over the locale $\Idl P$
whose points are the ideals of $P$ and whose opens are the Scott opens of the ideal
completion, or, equivalently, the Alexandrov opens (up-closed subsets) of $P$.

A technique that we shall find useful in various places is a simple form of \textbf{iterated forcing}.
Suppose $\mathcal{D}$ is an internal category in $[\mathcal{C},\Set]$, hence a functor from
$\mathcal{C}$ to $\mathbf{Cat}$.
A topos $[\mathcal{D},[\mathcal{C},\Set]]$ of internal diagrams can be defined,
together with a geometric morphism to $[\mathcal{C},\Set]$.
A point of it is a pair $(F,G)$ where $F$ is flat presheaf over $\mathcal{C}$ and $G$ is
a flat presheaf over $\mathcal{D}(F)$,
i.e. $\mathop{\mathrm{colim}}_{(C,x)}\mathcal{C}(C)$, the filtered colimit of the diagram
corresponding to $F$.

It turns out (see e.g.~\cite[C2.5]{Elephant1})
that $[\mathcal{D},[\mathcal{C},\Set]]$ too is a presheaf topos, over the category
$\mathcal{C}\ltimes\mathcal{D}$ defined as follows.
Its objects are pairs $(C,D)$ with $C$ an object of $\mathcal{C}$ and
$D\in\mathcal{D}_{0}(C)$, where $\mathcal{D}_{0}:\mathcal{C}\to\Set$ is the object of objects of
$\mathcal{D}$.
An arrow $(f,g):(C,D)\to(C',D')$ in
$\mathcal{C}\ltimes\mathcal{D}$ is given by an arrow $f:C\to C'$ in $\mathcal{C}$
and an arrow $g\in\mathcal{D}_{1}(C')$,
$g:\mathcal{D}_{0}(f)(D)\to D'$.

As an example, consider the case where $\mathcal{D}$ is discrete, corresponding to an
object $U:\mathcal{C}\to \Set$ in $[\mathcal{C},\Set]$.
Then the geometric morphism
$[\mathcal{D},[\mathcal{C},\Set]]\to [\mathcal{C},\Set]$ is the local homeomorphism
corresponding to $U$.
We thus see that the external view of the internal discrete locale $U$ is also given
by a presheaf topos, on $C\ltimes U$.
Its objects are pairs $(C,u)$ ($C$ an object of $\mathcal{C}$, $u\in U(C)$),
and a morphism from $(C,u)$ to $(D,v)$ is a morphsm $f:C\to D$ such that
$v = U(f)(u)$.

\subsection{A result on exponentiability} \label{sub:expo}

This subsection presents a general result (Theorem~\ref{thm:expexp}) on exponentiability.
It is not specifically about geometricity,
but is related because geometricity would like to replace the non-geometric construction
of the frame $\opens X$ by the locale exponential $\sier^{X}$,
which has the same points ($\opens X$ is the set of points of $\sier^{X}$)
and is geometric.
However, the exponential exists only if $X$ is locally compact, and so we are going to be
interested in local compactness in order to define the elements of the frame geometrically.

In fact, the following known theorem~\cite{Hyl81} holds for locales in any elementary topos.

\begin{theorem} \label{thm:loccptloc}
Let $X$ be a locale, then the following are equivalent:
\begin{enumerate}
\item $X$ is locally compact.
\item The functor $(-)\times X:\Loc\to\Loc$ has a right adjoint $(-)^{X}$.
\item The exponential $\sier^{X}$ exists, where $\sier$ denotes the
Sierpi\'nski locale.
\end{enumerate}
\end{theorem}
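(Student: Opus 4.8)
The plan is to prove the cycle of implications $(2) \Rightarrow (3) \Rightarrow (1) \Rightarrow (2)$, of which the first link is immediate and the last is the substantial one. For $(2) \Rightarrow (3)$, if the right adjoint $(-)^{X}$ to $(-)\times X$ exists, then $\sier^{X}$ is simply its value at the Sierpiński locale $\sier$, so the exponential exists; nothing further is required.

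For $(3) \Rightarrow (1)$ I would exploit the fundamental identification $\Loc(Z,\sier)\cong\opens Z$, natural in $Z$, which says that maps into $\sier$ are the same thing as opens. If $\sier^{X}$ exists, then for every locale $Z$ we get
\[
  \Loc(Z,\sier^{X})\cong\Loc(Z\times X,\sier)\cong\opens(Z\times X),
\]
naturally in $Z$. Taking $Z=1$ shows that the points of $\sier^{X}$ are exactly the opens of $X$, with specialization order $\spe$ given by inclusion in $\opens X$. The content of local compactness is that $\opens X$ is a continuous lattice, i.e.\ every open is the directed join of the opens way-below it. The task is therefore to recover the way-below relation $\ll$ and to verify the continuity condition $V=\bigvee\{U : U\ll V\}$ purely from the frame $\opens(\sier^{X})$. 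I would do this by analysing the evaluation map $\ev:\sier^{X}\times X\to\sier$, together with the classification of its points; the mere existence of the representing frame forces its Scott-open character and hence the continuity of $\opens X$. Extracting this join condition from representability is where the real argument in this direction lies.

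For $(1) \Rightarrow (2)$, the heart of the theorem, I would assume $X$ locally compact, so that $\opens X$ is a continuous lattice whose way-below relation $\ll$ enjoys the interpolation property ($U\ll W$ implies $U\ll V\ll W$ for some $V$). For an arbitrary locale $Y$ I would construct $Y^{X}$ by presenting its frame by generators and relations, the generators being formal symbols indexed by pairs $(U,W)$ with $U\in\opens X$ and $W\in\opens Y$, to be read as the basic opens $\{f : U\ll f^{\ast}(W)\}$ of the space of maps $X\to Y$ (with $U$ playing the role of a compact part approximated from below by $\ll$). The relations would encode that $f^{\ast}$ is a frame homomorphism, together with the Scott-continuity packaged into $\ll$. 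I would then verify the exponential adjunction $\Loc(Z\times X,Y)\cong\Loc(Z,Y^{X})$, naturally in $Z$, thereby exhibiting $(-)^{X}$ as right adjoint to $(-)\times X$.

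The main obstacle is precisely this last verification: that the presented frame carries the correct universal property. The delicate point is that a frame homomorphism out of the presented frame must correspond bijectively and naturally to a map $Z\times X\to Y$, and showing that the chosen relations are simultaneously necessary and sufficient uses interpolation of $\ll$ in an essential way, since splitting a way-below witness is what matches the composite of a transpose with evaluation. Rather than manipulate the frames directly, I would organize the argument geometrically, describing points of $Y^{X}$ as frame homomorphisms $\opens Y\to\opens X$ and checking naturality of the transposition on generalized points, which is also the form best suited to the geometricity discussion of the preceding subsections.
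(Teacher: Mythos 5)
The paper does not actually prove this theorem: it is quoted as known, with a citation to \cite{Hyl81}, so the comparison here is with the standard proof found there. Your overall strategy coincides with that standard proof: the cycle $(2)\Rightarrow(3)\Rightarrow(1)\Rightarrow(2)$, the triviality of $(2)\Rightarrow(3)$, the identification $\Loc(1,\sier^{X})\cong\Loc(X,\sier)\cong\opens X$ of points of $\sier^{X}$ with opens of $X$, and, for $(1)\Rightarrow(2)$, the presentation of $\opens(Y^{X})$ by generators read as $\{f\mid U\ll f^{\ast}(W)\}$ with interpolation of $\ll$ driving the verification. For that last implication your plan is essentially Hyland's; the one ingredient you should name explicitly is the relation encoding binary joins, which rests on the decomposition property of continuous frames (if $u\ll v\vee v'$ then $u\leq u_{1}\vee u_{2}$ with $u_{1}\ll v$, $u_{2}\ll v'$), and interpolation enters exactly where you say, e.g.\ in showing $u\ll f^{\ast}(\bigvee^{\uparrow}v_{i})$ implies $u\ll f^{\ast}(v_{i})$ for some $i$.

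There is, however, a genuine gap in your $(3)\Rightarrow(1)$: the assertion that ``the mere existence of the representing frame forces its Scott-open character'' is precisely the statement to be proved, and your sketch supplies no mechanism for producing way-below witnesses. The missing idea is the rectangle decomposition of the evaluation open: let $E\in\opens(\sier^{X}\times X)$ classify the open point of $\sier$ under $\ev$, and write $E=\bigvee_{k}a_{k}\oplus u_{k}$ as a join of basic rectangles (possible in any frame coproduct). For $V\in\opens X$ with transposed point $v\colon 1\to\sier^{X}$, pulling $E$ back along $v\times\id_{X}$ gives $V=\bigvee\{u_{k}\mid v\models a_{k}\}$, and the claim $u_{k}\ll V$ whenever $v\models a_{k}$ is what needs proof. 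Here ordinary points do not suffice: given a directed cover $D$ of $V$, knowing that each $d\in D$ and $\bigvee D$ are points of $\sier^{X}$ (with $v\spe$ the point of $\bigvee D$, so the latter also satisfies $a_{k}$) does not by itself produce a single $d\in D$ with $u_{k}\leq d$. One must probe $a_{k}$ with the \emph{generalized} point $\Idl(D)\to\sier^{X}$ obtained by transposing the map $\Idl(D)\times X\to\sier$ that classifies $\bigvee_{d\in D}(\upset d\oplus d)$; the pullback of $a_{k}$ is then a Scott-open set of ideals containing the ideal $D$ itself, which forces some principal ideal $\downset d$ into it and hence $u_{k}\leq d$. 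Without this decomposition-plus-$\Idl(D)$ argument (the localic form of the Day--Kelly argument), the implication does not go through; with it, your outline becomes the standard proof.
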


We are going to build up to Theorem~\ref{thm:locexpexp}, stating that `locally perfect maps compose',
where $p: Y \to X$ is \textbf{locally perfect} if the corresponding internal locale
$\underline{Y}$ in $\Sh(X)$ is locally compact.%
\footnote{
  The terminology is debatable, and will depend on whether one calls $p$ \emph{perfect} or
  \emph{proper} for the situation where $\underline{Y}$ is compact.
  In the present paper we are following Johnstone's usage \cite{johnstoneloc},
  where a map $f:X\to Y$ between topological spaces is called locally perfect if the following condition holds.
  If $U\in\opens X$ and $x\in U$, then there exists an open neighborhood $V$ of $x$,
  an open neighborhood $N$ of $f(x)$, and a set $K\subseteq X$,
  such that $K\subseteq U\cap f^{-1}(N)$ and for each $b\in N$, the fibre $K_{b}$ is compact.
  He shows that if $f$ is locally perfect,
  then $f_*(X)$ is a locally compact locale in $\Sh(Y)$,
  and the converse holds if $Y$ satisfies the $T_D$ separation property.
}
The precise form of our result is that if $X$ is a locally compact locale,
and $p:Y\to X$ is locally perfect, then $Y$ is locally compact.
However, this can be relativized.
If also $q:Z\to Y$ is locally perfect, then by working in $\Sh(X)$ we find
that $pq$ is locally perfect.

We shall apply this to the spectrum of an internal C*-algebra to show that its external description is a locally compact locale.

Consider $p:Y\to X$ as above, and assume for a moment that the exponential $\sier^{Y}$ exists. A point of
$\sier^{Y}$ is equivalent to a map $Y\to\sier$, which corresponds to an open $U\in\opens Y$.  By assumption
$\underline{Y}$ is locally compact in $\Sh(X)$, so the exponential
$\underline{\sier}^{\underline{Y}}$ exists in $\Sh(X)$,
where $\underline{\sier}$ denotes the internal Sierpi\'nski locale.
Using the fact that $\Loc_{\Sh(X)}$ is equivalent to $\Loc/X$,
the locale $\underline{\sier}^{\underline{Y}}$ has an external description by a locale
map $q:\sier^{Y}_{X}\to X$, for some locale $\sier^{Y}_{X}$.
The external description of $\underline{\sier}$ is the projection
$\pi_{2}:\sier\times X\to X$.
As exponentiation of locales is geometric~\cite[Sec\ 10]{PPExp},
the fibre $q^*(\{x\})$ over a point $x$ in $X$ is given by $\sier^{Y_x}$,
where we write $Y_x$ for the fibre $x^* Y = p^{-1}(\{x\})$.

An open $U\in\opens Y$ and a point $x\in X$ give an open $U_{x}$ in the fibre $Y_x$,
as in the figure below.
This in turn is equivalent to a map $Y_x\to\sier$, which is a point of $q^*(\{x\})$.
This suggests that the global points of
$\sier^{Y}$ correspond exactly to the global sections of the bundle
$q:\sier^{Y}_{X}\to X$. That is, global points of $\sier^{Y}$
correspond to maps $\sigma: X\to\sier^{Y}_{X}$ such that $q\circ\sigma=\id_X$.

\begin{center}
 \includegraphics[height=50mm]{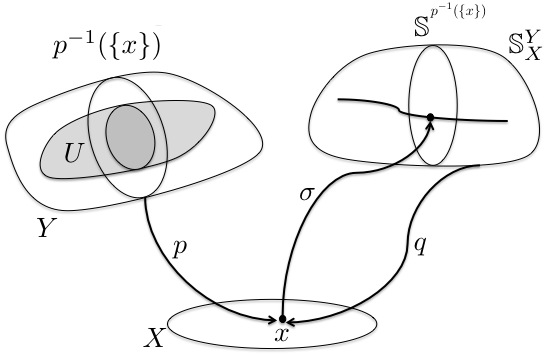}
\end{center}

The informal reasoning above can be made precise in order to prove that for a locally compact $X$, and $\underline{Y}$ a locally compact in $\Sh(X)$,
with external description $p:Y\to X$, the locale $Y$ is locally compact. We define locales by
providing their geometric theories (of generalized points) and we define
continuous maps as constructively described transformations of points of one locale to points
of another locale; see~\cite[Sec.~4.5]{LocTopSp}.

\begin{theorem}[locally perfect maps compose]\label{thm:locexpexp}
If $X$ is a locally compact locale, and $\underline{Y}$ a locally compact locale in $\Sh(X)$,
with external description $p:Y\to X$, then $Y$ is locally compact.
\end{theorem}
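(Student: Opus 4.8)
The plan is to invoke Theorem~\ref{thm:loccptloc}: to prove $Y$ locally compact it suffices to construct the exponential $\sier^{Y}$, i.e.\ a locale representing $W\mapsto\mathbf{Loc}(W\times Y,\sier)\cong\opens(W\times Y)$ naturally in the test locale $W$. Since $\underline{Y}$ is locally compact in $\Sh(X)$, the internal exponential $\underline{\sier}^{\underline{Y}}$ exists there; writing $q\colon\sier^{Y}_{X}\to X$ for its external description under the equivalence $\mathbf{Loc}_{\Sh(X)}\simeq\mathbf{Loc}/X$, geometricity of exponentiation guarantees that its fibre over a point $x$ is $\sier^{Y_{x}}$, as recorded in the discussion preceding the theorem. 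I would then verify the exponential adjunction for the candidate $\sier^{Y}$ directly on generalized points, so that no frame is ever computed.

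The heart of the argument is a chain of bijections natural in $W$. First, pairing any map $W\times Y\to\sier$ with the canonical map $W\times Y\to X$ identifies $\mathbf{Loc}(W\times Y,\sier)$ with the maps over $X$ from $(W\times Y\to X)$ to $(\sier\times X\to X)$. Writing $\gamma^{*}\colon\mathbf{Loc}\to\mathbf{Loc}_{\Sh(X)}$ for the constant-family functor (so $\gamma^{*}W$ has external description $W\times X\to X$ and $\gamma^{*}\sier=\underline{\sier}$), the equivalence $\mathbf{Loc}_{\Sh(X)}\simeq\mathbf{Loc}/X$ turns these into internal maps $\gamma^{*}W\times\underline{Y}\to\underline{\sier}$ in $\Sh(X)$, since $\gamma^{*}W\times\underline{Y}$ has external description $W\times Y\to X$. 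The internal exponential adjunction (valid as $\underline{Y}$ is locally compact) rewrites these as internal maps $\gamma^{*}W\to\underline{\sier}^{\underline{Y}}$, and applying the equivalence once more presents them as maps over $X$ from $(W\times X\to X)$ to $(\sier^{Y}_{X}\xrightarrow{q}X)$. Finally, because $X$ is locally compact its terminal projection is an exponentiable map, so the functor $W\mapsto(W\times X\to X)$ has a right adjoint $\Pi\colon\mathbf{Loc}/X\to\mathbf{Loc}$; this last hom-set over $X$ is therefore $\mathbf{Loc}(W,\Pi(\sier^{Y}_{X}\to X))$. Setting $\sier^{Y}:=\Pi(\sier^{Y}_{X}\to X)$ — morally the locale of continuous sections of $q$, matching ``global points of $\sier^{Y}$ are global sections of $q$'' — yields $\mathbf{Loc}(W\times Y,\sier)\cong\mathbf{Loc}(W,\sier^{Y})$ naturally, which is exactly the universal property, so $Y$ is locally compact.

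Conceptually this is just the composite of two exponential adjunctions — the internal one supplied by local compactness of $\underline{Y}$ and the base one supplied by local compactness of $X$ — which is why the result deserves the slogan that locally perfect maps compose. I expect the main obstacle to be bookkeeping rather than a single hard idea: one must transport the internal exponential adjunction faithfully across the Joyal--Tierney equivalence $\mathbf{Loc}_{\Sh(X)}\simeq\mathbf{Loc}/X$, checking that finite products and the Sierpi\'nski object are carried to the expected bundles and that hom-sets correspond, and then absorb the base $X$ using the right adjoint to $W\mapsto(W\times X\to X)$, all while maintaining naturality in $W$. The role of geometricity is precisely to license the fibrewise reading of $q$ (fibre $\sier^{Y_{x}}$ over $x$), reassuring us that the candidate $\sier^{Y}$ is the correct one; the remaining care is to confirm that these correspondences survive passage from global to genuinely generalized points $W$.
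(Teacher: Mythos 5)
Your proof is correct, and it is in substance the paper's own argument — though it matches the paper's \emph{abstract} version rather than the point-based proof attached to the statement itself. The paper proves Theorem~\ref{thm:locexpexp} directly by defining the section locale $E$ (via the exponential $(\sier^{Y}_{X})^{X}$ and an equalizer), constructing the evaluation map, and verifying the universal property on generalized points, with geometricity doing the work that naturality in $W$ does for you; it then immediately records the categorical generalization, Theorem~\ref{thm:expexp}, of which Theorem~\ref{thm:locexpexp} is noted to be the special case $\mathcal{C}=\Loc$, $Z=\sier$. Your argument coincides with that second proof: your right adjoint $\Pi$ to $X^{\ast}\colon\Loc\to\Loc/X$ (which exists precisely because $X$ is exponentiable) is computed as the pullback of $q^{X}\colon(\sier^{Y}_{X})^{X}\to X^{X}$ along $\ulcorner X\urcorner\colon 1\to X^{X}$, i.e.\ your $\Pi(q)$ \emph{is} the paper's equalizer $E$; and your chain of bijections $\mathbf{Loc}(W\times Y,\sier)\cong\mathbf{Loc}/X(W\times Y,\sier\times X)\cong\mathbf{Loc}_{\Sh(X)}(\gamma^{\ast}W\times\underline{Y},\underline{\sier})\cong\mathbf{Loc}_{\Sh(X)}(\gamma^{\ast}W,\underline{\sier}^{\underline{Y}})\cong\mathbf{Loc}/X(W\times X,\sier^{Y}_{X})\cong\mathbf{Loc}(W,\Pi(q))$ is exactly the chain in the proof of Theorem~\ref{thm:expexp}, with the middle ``maps over $X$'' steps made explicit through the Joyal--Tierney equivalence. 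Comparing the two routes: the paper's point-based proof of Theorem~\ref{thm:locexpexp} yields the concrete fibrewise picture (points of $E$ as sections $\sigma$, evaluation $(\sigma,y)\mapsto\ev(\sigma(py),y)$) and illustrates the geometric method the paper advocates, while your hom-set chain makes naturality automatic, never mentions points, and generalizes verbatim to an arbitrary exponent $Z$ in place of $\sier$ and an arbitrary finitely complete $\mathcal{C}$ in place of $\Loc$ — which is precisely the generalization the paper itself packages as Theorem~\ref{thm:expexp}. There is no gap: the one delicate point you flag, transporting the internal exponential adjunction across $\mathbf{Loc}_{\Sh(X)}\simeq\mathbf{Loc}/X$ with $\gamma^{\ast}W\mapsto(W\times X\to X)$, $\underline{\sier}\mapsto(\sier\times X\xrightarrow{\pi_{2}}X)$ and internal products as fibre products over $X$, is legitimate and is what the paper's ``over $X$'' bookkeeping silently encodes.
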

\begin{proof}
The locale $Y$ is the locale with (generalized) points the pairs
$(x,t)$ such that $x\in X,t\in Y_x$.
That is, we view the map $p$ as a bundle; see~\cite{Vickers:ContIsGeom} and the discussion of the bundle picture in Subsection~\ref{sub:geometricmath}.
As before, $\sier_X^Y$ denotes the external description of the internal locale
$\underline{\sier}^{\underline{Y}}$ in $\Sh(X)$.
Since the exponential is geometric \cite[Sec.\ 10.3]{PPExp},
the generalized points of $\sier_X^Y$ are the pairs $(x,w)$ such that
$x\in X, w\in \sier^{Y_x}$.
The internal evaluation map
$\underline{\ev}:
   \underline{\sier}^{\underline{Y}}\times\underline{Y}\to \underline{\sier}$,
part of the geometric structure of $\underline{\sier}^{\underline{Y}}$,
must then correspond to a map
$\ev: \sier_X^Y \times_X Y \to \sier$ given by $((x,w),y) \mapsto w(y)$.
We define $E$ as the locale with (generalized) points those $\sigma:X \to \sier_X^Y$
such that $q\circ \sigma=\id_X$.
Here local compactness of $X$ allows us to define the exponential $(\sier_X^Y)^X$,
and an equalizer captures the section condition to give a sublocale.
To define the map $\ev: E\times Y \to \sier$,
we first define the map $E\times Y \to \sier_X^Y\times_X Y$ by
$(\sigma,y)\mapsto (\sigma(py),y)$ and then compose it with the map
$\ev:\sier_X^Y\times_X Y \to \sier$.

To complete the proof, we require that if $g: Z \times Y \to \sier$,
then there is a unique $\tilde{g}:Z\to E$ such that
$g = \ev\circ(\tilde{g}\times \id_Y)$. This condition amounts to saying that for all $z$, $y$,
\[
  g(z,y) = w(y) \text{, where } \tilde{g}(z)(py) = (py,w)\text{.}
\]
Assuming this condition, consider $\tilde{g}(z)(x) = (x,w)$, for some $w$ in $\sier^{Y_x}$.
For all $y$ in $Y_x$ we must have $\tilde{g}(z)(py) = \tilde{g}(z)(x) = (py,w)$,
and by the condition $w(y) = g(z,y)$.
Hence $w$ is uniquely determined for each $x$, and so $\tilde{g}$ is unique.
Reversing the argument, and relying heavily of geometricity, we see that it leads to a
definition of $\tilde{g}$, given $g$.
\end{proof}

Theorem~\ref{thm:locexpexp} holds in a much more abstract categorical form.
This can be applied when the base $X$ is a non-localic topos that is exponentiable
in the category of toposes -- our prime example will be a topos of the form
$[\mathcal{C},\Set]$ where $\mathcal{C}$ is not a poset.

\begin{theorem}\label{thm:expexp}
  Let $\mathcal{C}$ be a category with finite limits, and let $X$ be an exponentiable object
  in $\mathcal{C}$.
  Let $p: Y \to X$ be an object $\underline{Y}$ of $\mathcal{C}/X$,
  let $Z$ be an object of $\mathcal{C}$, and suppose the exponential
  $\underline{Z}^{\underline{Y}}$ exists in $\mathcal{C}/X$.
  We shall write it as $q: Z^{Y}_{X}\to X$.
  Then $Z^{Y}$ exists in $\mathcal{C}$.
\end{theorem}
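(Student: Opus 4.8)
The plan is to reduce the whole statement to a single chain of natural isomorphisms of hom-sets, organised so that it works for an arbitrary test object $W$ of $\mathcal{C}$ rather than only for global points. This is the abstract version of the pointwise reasoning in the proof of Theorem~\ref{thm:locexpexp}: there one exhibited the points of $\sier^Y$ as the sections of $q$, and here the same idea is made to produce the full universal property. Write $\underline{Z}$ for the constant bundle $\pi_2\colon Z\times X\to X$ in $\mathcal{C}/X$, so that by hypothesis $\underline{Z}^{\underline{Y}}$ is the object $q\colon Z^Y_X\to X$. The goal is an object $Z^Y$ of $\mathcal{C}$ with a bijection $\mathcal{C}(W,Z^Y)\cong\mathcal{C}(W\times Y,Z)$ natural in $W$.

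First I would use exponentiability of $X$ to form $(Z^Y_X)^X$ and $X^X$ together with the adjunction $(-)\times X\dashv(-)^X$ on $\mathcal{C}$, and then \emph{define} $Z^Y$ as the pullback
\[
  \xymatrix{
    Z^Y \ar[r] \ar[d] & (Z^Y_X)^X \ar[d]^{q^X} \\
    1 \ar[r]_{\lceil\id_X\rceil} & X^X
  }
\]
where $\lceil\id_X\rceil\colon 1\to X^X$ is the name of the identity. This pullback exists because $\mathcal{C}$ has finite limits, and it is the abstract counterpart of the locale $E$ of sections $\sigma$ with $q\circ\sigma=\id_X$ used in Theorem~\ref{thm:locexpexp}, the section condition now being encoded by the pullback square rather than by an equalizer.

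The heart of the argument is then to string together natural isomorphisms. By the universal property of the pullback, a map $W\to Z^Y$ is the same as a map $\bar f\colon W\to (Z^Y_X)^X$ with $q^X\circ\bar f=\lceil\id_X\rceil\circ\,{!}_W$; transposing across $(-)\times X\dashv(-)^X$, this is the same as a map $f\colon W\times X\to Z^Y_X$ with $q\circ f=\pi_2$. The latter condition says precisely that $f$ is a morphism in $\mathcal{C}/X$ from the bundle $\pi_2\colon W\times X\to X$ to $q\colon Z^Y_X\to X$, so it is an element of $(\mathcal{C}/X)(\underline{W\times X},\underline{Z}^{\underline{Y}})$. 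Applying the defining adjunction of the exponential $\underline{Z}^{\underline{Y}}$ in $\mathcal{C}/X$ turns this into $(\mathcal{C}/X)(\underline{W\times X}\times_X\underline{Y},\underline{Z})$. Finally I would compute that $\underline{W\times X}\times_X\underline{Y}\cong W\times Y$ (with structure map $p\circ\pi_2$ to $X$) and that a morphism from it into the constant bundle $\underline{Z}=(\pi_2\colon Z\times X\to X)$ has its component to $X$ forced, hence is exactly a map $W\times Y\to Z$. Altogether this gives
\begin{align*}
  \mathcal{C}(W,Z^Y)
    &\cong (\mathcal{C}/X)(\underline{W\times X},\underline{Z}^{\underline{Y}}) \\
    &\cong (\mathcal{C}/X)(\underline{W\times X}\times_X\underline{Y},\underline{Z})
     \cong \mathcal{C}(W\times Y,Z),
\end{align*}
which is the required universal property of $Z^Y$.

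The step I expect to be the main obstacle is translating the section condition $q\circ f=\pi_2$ correctly across the transpose, namely verifying that under $(-)\times X\dashv(-)^X$ it corresponds to $q^X\circ\bar f=\lceil\id_X\rceil\circ\,{!}_W$ and that the pullback square captures exactly these $\bar f$; this is also where naturality in $W$ must be checked with care, since everything hinges on the transposition being natural in the codomain variable. The remaining identifications — the slice hom-set as maps over $X$, the pullback $\underline{W\times X}\times_X\underline{Y}\cong W\times Y$, and the forced second component into $\underline{Z}$ — are routine bookkeeping, and the existence of all the exponentials and limits involved is immediate from exponentiability of $X$ and finite limits in $\mathcal{C}$. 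Specialising to $\mathcal{C}=\Loc$, $Z=\sier$ (so that $\underline{Z}^{\underline{Y}}$ exists precisely when $\underline{Y}$ is locally compact in $\Sh(X)$) recovers Theorem~\ref{thm:locexpexp} via Theorem~\ref{thm:loccptloc}.
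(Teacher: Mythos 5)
Your proposal is correct and takes essentially the same route as the paper: your pullback of $q^X$ along $\lceil\id_X\rceil\colon 1\to X^X$ is precisely the paper's equalizer of $q^{X}$ and $\ulcorner X\urcorner\circ{!}$, and your chain of bijections $\mathcal{C}(W,E)\cong(\mathcal{C}/X)(\underline{W\times X},\underline{Z}^{\underline{Y}})\cong(\mathcal{C}/X)(\underline{W\times X}\times_X\underline{Y},\underline{Z})\cong\mathcal{C}(W\times Y,Z)$ is exactly the correspondence the paper runs in its final paragraph to verify the universal property. The only (legitimate) presentational difference is that the paper first constructs the evaluation map $E\times Y\to Z$ explicitly via a diagram chase through $Z^{Y}_{X}\times_{X}Y$, whereas you obtain it from representability via Yoneda, relying on the naturality in $W$ that you correctly flag as the point needing care.
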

\begin{proof}
By the considerations above Theorem~\ref{thm:locexpexp}, we arrive at the
following candidate for the exponential $Z^{Y}$.  Take the equalizer
\[
\xymatrix{E\  \ar@{^{(}->}[r]^{eq} & (Z^{Y}_{X})^{X}
\ar@/^/[r]^{q^{X}} \ar@/_/[r]_{\ulcorner X\urcorner\circ !} & X^{X}},
\]
where $\ulcorner X\urcorner:1\to X^{X}$ denotes the transpose of the identity arrow of $X$. Note that the
exponentials $(Z^{Y}_{X})^{X}$ and $X^{X}$
exist in $\mathcal{C}$ by exponentiability of $X$.
Also note that the global points of $E$ are exactly the global sections of $q$.
Next, we need to find a suitable evaluation map $ev: E\times Y\to Z$.
For the definition of $ev$ we will make use of the internal evaluation arrow
$\underline{Z}^{\underline{Y}}\times\underline{Y}\to\underline{Z}$.
Externally this gives the following commuting triangle:
\[
\xymatrix{ Z^{Y}_{X}\times_{X} Y \ar[rr]^{ev} \ar[dr] & &
Z\times X \ar[dl]^{\pi_{2}} \\
& X & } \]
With some abuse of notation, we denote the map $\pi_{1}\circ ev:Z^{Y}_{X}\times_{X}Y\to Z$ again by
$ev$. For the next step in defining the evaluation map, the diagram given below is commutative by the definition of $E$.

\[ \xymatrix{ E\times Y \ar[ddd]_{\pi_{2}} \ar[r]^{E\times p} & E\times X  \ar@/_3pc/[dddrrrr]_{\pi_{2}}
\ar[rr]^{eq\times X} & & (Z^{Y}_{X})^{X}\times X \ar[rr]^{ev} \ar[ld]_{!\times X} \ar[ddr]^{q^{X}\times X} & &
Z^{Y}_{X} \ar[ddd]^{q} \\
& & 1\times X \ar[drr]_{\ulcorner X\urcorner\times X} & & & \\
& & & & X^{X}\times X \ar[dr]^{ev} &\\
Y \ar[rrrrr]_{p} & & & & & X} \]

The evaluation maps and exponentials in this diagram exist because of the exponentiability of $X$.
By the universal property of pullbacks this diagrams yields
an arrow $E\times Y\to Z^{Y}_{X}\times_{X} Y$.
Taking the composition with $ev:Z^{Y}_{X}\times_{X} Y\to Z$ coming from the internal
exponential $\underline{Z}^{\underline{Y}}$ gives the desired
evaluation map $ev: E\times Y\to Z$.

It remains to check that this map satisfies the desired universal property.
Maps $Z\times Y\to Z$ correspond bijectively with maps $(Z\times
X)\times_{X} Y\to Z\times X$ over $X$. By the existence of the internal
exponent $\underline{Z}^{\underline{Y}}$ the latter maps correspond
bijectively with maps $Z\times X\to Z^{Y}_{X}$ over $X$.
Using exponentiability of $X$ maps $Z\times X\to Z^{Y}_{X}$ correspond bijectively
with maps $Z\to(Z^{Y}_{X})^{X}$. The maps  $Z\times
X\to Z^{Y}_{X}$ that are maps over $X$ precisely correspond to the maps
$Z\to(Z^{Y}_{X})^{X}$ that factor through $E$. This proves that $E$ is
indeed an exponential $Z^{Y}$.
\end{proof}

In particular it follows from the theorem that if $\underline{Y}$ is exponentiable in $\mathcal{C}/X$,
then $Y$ is exponentiable in $\mathcal{C}$. Theorem~\ref{thm:locexpexp} is the special case where $\mathcal{C}$ is the category of
locales.

\section{Toposes and Quantum Theory} \label{sec:quantum}

The topos approaches to quantum theory were inspired by the work of
Butterfield and Isham \cite{butterfieldisham1, butterfieldisham3, butterfieldisham2, butterfieldisham4}. The original ``BI'' approach,
or presheaf approach (as it is formulated using a topos of presheaves) was subsequently developed
by D\"oring and Isham~\cite{DoeringIsham:WhatThingTTFP}.
In the present paper we are particularly interested in the different formulation of~\cite{qtopos},
which we call the copresheaf approach.
What these two approaches share is their use, for a given quantum system described by a C*-algebra,
of a topos whose internal logic embodies the idea of fixing a generic \textbf{context},
or classical perspective on that system.
Thus in the internal logic, the system has a classical phase space.
Our aim here is to show how this internal phase space can be represented
externally by a bundle, whose fibres are the phase spaces for individual contexts.

In the approaches cited, a unital C*-algebra\footnote{In the presheaf approach the smaller class of von Neumann algebras are typically considered, rather than arbitrary unital C*-algebras.} $A$ is studied through the poset $\mathcal{C}(A)$
of commutative unital C*-subalgebras of $A$, ordered by inclusion,
and using a topos of functors from $\cC{A}$ to $\Set$.

At this point the presheaf and copresheaf approaches diverge, using contravariant and covariant functors respectively.
We shall focus now on the copresheaf approach with its topos $[\cC{A},\Set]$.
To study the operator algebra $A$ from the perspectives of its commutative C*-subalgebras,
it is replaced by the covariant functor
$\ua:\mathcal{C}(A)\to\Set$, $\ua(C)=C$, where the arrows are mapped to inclusions.
In the topos $[\mathcal{C}(A),\Set]$,
$\ua$ becomes an internal unital \emph{commutative} C*-algebra.

From the internal perspective of the topos, the quantum-mechanical observables,
now in the form of the commutative C*-algebra $\ua$,
look more like the observables of a classical physical theory.
Crucial for this is the topos-valid version of Gelfand duality,
described in the work of Banaschewski and Mulvey \cite{banaschewskimulvey00a,banaschewskimulvey00b,banaschewskimulvey06}
as a duality between the category of unital commutative C*-algebras and
the category of compact, completely regular locales.
A more explicit and fully constructive description of this Gelfand duality is given in \cite{coquand05,CoquandSpitters:cstar}.
By this duality, $\ua$ is isomorphic to the $C^{\ast}$-algebra of continuous,
complex-valued functions on a certain compact, completely regular locale $\uS$,
the spectrum of $\ua$, which we think of as a phase space.
For further discussion of the ideas of the topos approach, such as the treatment of states, see
\cite{qtopos,Bohrification_ql,Bohrification,CaspersHLS:IntQLnLS,FRV:Born}. We also mention exciting recent work which reconstructs the
Jordan algebra structure from the spectral
object~\cite{Hamhalter:iso-order,HamhalterTurilova,HardingDoering,Doering:2012a,Doering:2012b}.

In this section we apply the geometricity ideas of the previous section to the topos approach described
in \cite{qtopos}.
In particular, we concentrate on the description of the spectrum of internal commutative
unital C*-algebras in toposes.
The first subsection examines the geometricity of parts of the definition of C*-algebra.
This is a central issue, since the C*-algebra structure itself is not geometric,
so it must be replaced by a geometric structure that,
while not being the whole C*-algebra,
nonetheless supports a geometric construction of the spectrum.

The second subsection examines the geometricity of the construction of the spectrum.
The third subsection is concerned with finding an explicit
description of the spectrum of the algebra $\ua$ in the particular topos $[\mathcal{C}(A),\Set]$,
which is central to the topos approach.
The fourth subsection generalizes this description to spectra of commutative unital C*-algebras in functor categories in general. In the fifth subsection we consider extending the copresheaf approach to algebraic quantum field theory. Throughout the emphasis will be on the role of geometric logic.

\subsection{C*-algebras in toposes} \label{sub:CStarAlg}

In a topos $\mathcal{E}$, a C*-algebra $A$ (always assumed unital here) --
\begin{enumerate}
\item
  is an associative, unital algebra over $\mathbb{C}$ with an involution ${*}$, which is anti-multiplicative ($(ab)^{\ast}=b^{\ast}a^{\ast}$) and conjugate linear ($(za+b)^{\ast}=\bar{z}a^{\ast}+b^{\ast}$, where $z\in\mathbb{C}$),
\item
  has a norm $\lVert - \rVert$, which is submultiplicative and satisfies the axiom $\lVert aa^{*}\rVert = \lVert a \rVert^{2}$,
  and
\item
  is complete with respect to the norm.
\end{enumerate}

However, the definition is not geometric, for three reasons.

First, with regard to the algebraic structure (1) above, already $\mathbb{C}$ --
as \emph{set} of complex numbers rather than a locale -- is not a geometric construction.
For a truly geometric account, a C*-algebra would have to be a locale rather than a set.
In~\cite{banaschewskimulvey06}, a C*-algebra is defined as an algebra over the Gaussian
numbers $\scal$, upon which completeness then allows the action of $\scal$ to be extended
to $\mathbb{C}$.
Apart from this, the algebraic part is straightforward.
It consists of arrows
$\underline{+}, \underline{\cdot}:\ua\times\ua\to\ua$
for addition and multiplication,
an arrow $\underline{\ast}:\ua\to\ua$ for the involution,
an arrow
$\underline{\scal}\times\ua\to\ua$ for scalar multiplication,
and constants $0,1:\underline{1}\to\ua$ for the unit and zero element.
These arrows are to render all desired diagrams commutative.
Note that we made use of the geometricity of $\scal$.
In the language of Subsection~\ref{sub:geometriclogic},
in a formal geometric theory we could declare a sort $k$,
add structure and axioms to force it to be isomorphic to $\mathbb{Q}\times\mathbb{Q}$,
and then define the appropriate operations as ring with involution.

Second, the norm is not geometric.
We shall develop a geometric theory of ``commutative G*-algebra'',
more general than unital commutative C*-algebras, but expressing enough of the structure
to define the spectrum.
For the norm, we shall ignore the condition
$\lVert a \rVert = 0 \rightarrow a=0$, as it is not geometric%
\footnote{
  When the norm is expressed as a subobject of $A\times\mathbb{Q}^{+}$,
  the potential non-geometricity of the axiom is visible in $\lVert a \rVert = 0$ as a universal quantifier over $\mathbb{Q}^{+}$.
}:
so we have only a semi-norm.
As explained in~\cite{LocCompA}, the semi-norm as described in~\cite{banaschewskimulvey06} with a
binary relation $N\subseteq A\times\mathbb{Q}^{+}$ can be understood as a map
$\lVert\cdot\rVert:A\to\upperR$ taking its values in the \emph{upper reals}.
Then $(a,q)\in N$ if $\lVert a\rVert <q$. The third reason, connected to the first one, is that the completeness of $A$ with respect to the norm is not geometric.

What we seek, therefore, is a geometric notion that generalizes commutative C*-algebras,
and on which we can still, and geometrically, calculate the spectrum.

There is a geometric core to the definition of C*-algebra,
in the notion of \textbf{semi-normed pre-C*-algebra} --
that is, a *-algebra over $\mathbb{Q}[i]$, but dropping completeness and weakening the norm
to a semi-norm.
However, that runs into problems because at a certain point in constructing the spectrum
we need to know the the order on the self-adjoints $A_{sa}$, or, alternatively, its positive cone.
In a C*-algebra a self-adjoint is (non-strictly) positive iff it is a square,
but without completeness we cannot guarantee the vital property that the sum of squares is still a square -- for example, 2 might not be a square.

Our way round this is to use the preordered archimedean rings of~\cite{coquand05}.

\begin{definition}
  A commutative $\mathbb{Q}$-algebra $R$ is called \textbf{preordered} if it has a \textbf{positive cone},
  i.e., a subset $P$ that contains all squares and is closed under addition and multiplication by $\mathbb{Q}^{+}$.
  The preorder is then given by $a\leq b$ if $b-a\in P$.

  The preorder $R$ is \textbf{archimedean} if, in addition, for each $a\in R$ there is some $r\in \mathbb{Q}$
  such that $a\leq r$.

  A \textbf{commutative G*-algebra} is a commutative%
    \footnote{
      In this ad hoc naming, we write `G' for `geometric'. We have not attempted to define non-commutative G*-algebras,
      since it is not so easy to order the self-adjoints when they don't commute.
    }
  $\scal$-*-algebra $A$ for which the self-adjoint part
  $A_{sa}$ is a preordered archimedean ring.
\end{definition}

Since the definition is geometric, there is a canonical notion of homomorphism between two
commutative G*-algebras: a function that preserves the *-algebra structure and positivity.

\subsection{The Gelfand spectrum and normal lattices} \label{sub:Gelf}\label{sub:bohr}

If $A$ is a commutative C*-algebra in a topos, then its spectrum $\Sigma_{A}$ is the space of the continuous *-algebra
homomorphisms $x:A\to\mathbb{C}$.
The classical theory then says that $A$ is isomorphic to the complex algebra of continuous maps from $\Sigma_{A}$
to $\mathbb{C}$,
and its self-adjoint part $A_{sa}$ is isomorphic to the real algebra of continuous maps from $\Sigma_{A}$
to $\mathbb{R}$.
The topology on $\Sigma_{A}$ is the weak-* topology, and another way to say this is that
a subbasis of opens is provided by the sets of the form $\{x\mid x(a)>0\}$ for $a\in A$ self-adjoint.

This suggests that if we want a geometric description of the points of the spectrum, we should use
the elements $a\in A_{sa}$ to form propositional symbols (let us say $D(a)$),
and add axioms to say that in this system $D(a)$ behaves like $\{x\mid x(a)>0\}$.
This was done in~\cite{banaschewskimulvey06}.

Coquand~\cite{coquand05} defines, for any preordered archimedean ring, the spectrum (which he calls the maximal spectrum) in a point-free
way.

If $A$ is a commutative C*-algebra, $A$ is a commutative G*-algebra by restricting $\mathbb{C}$ to $\scal$ and defining its positive cone to be the set of
all squares. This is closed under addition and multiplication by positive rationals.
Also, the archimedean property follows from the existence of the norm.
Then~\cite{CoquandSpitters:cstar} its point-free Gelfand spectrum is isomorphic to the spectrum constructed in \cite{coquand05}.

As mentioned before, a point-free approach will give axioms characterizing the behaviour of
formal symbols $D(a)$ ($a\in A_{sa}$) with intended meaning $\{x\mid x(a)>0\}$. The axioms in~\cite{coquand05} are:
\begin{subequations}\label{equ:SigmaA1}
\begin{align}
  D(a) \wedge D(-a) &\vdash \bot             \label{equ:SigmaA1a}\\
  D(a) &\vdash \bot \text{ if } a\leq 0      \label{equ:SigmaA1b}\\
  D(a+b) &\vdash D(a) \vee D(b)              \label{equ:SigmaA1c}\\
  \top &\vdash D(1)                          \label{equ:SigmaA1d}\\
  D(ab) &\dashv\vdash
    (D(a)\wedge D(b)) \vee (D(-a)\wedge D(-b)) \label{equ:SigmaA1e}
\end{align}
\end{subequations}
\begin{equation}\label{equ:SigmaA2}
  D(a) \vdash \bigvee_{0<r\in\mathbb{Q}}D(a-r).
\end{equation}

These are all intuitively clear in terms of the intended meaning.
For example, (\ref{equ:SigmaA1a}) says that for
no $x$ can we have both $x(a)>0$ and $x(-a) = -x(a) > 0$,
while (\ref{equ:SigmaA2}) says that if
$x(a)>0$, then $x(a)>r$ for some $r>0$.

The fact that these axioms are enough is not at all obvious, and takes up some substantial,
non-trivial calculations~\cite{coquand05}.
The upshot is that each point $x$ of the spectrum can be described
geometrically by the set of those elements $a\in A_{sa}$ for which $x(a)>0$.
In fact, we could take it (but we shall modify this view) that that set \emph{is} the point $x$.
Such a set must conform with the axioms.
For example, it cannot contain both $a$ and $-a$.

We do not wish to recap those substantial calculations,
but there is an important structural part of the development of which we shall make considerable use.
The six axioms above (\ref{equ:SigmaA1},\ref{equ:SigmaA2}) are a propositional geometric theory,
with propositional symbols $D(a)$ indexed by elements
$a$ of $A_{sa}$. As mentioned in Section~\ref{sub:geometriclogic},
it can be used to present a frame $F_A$, with the $D(a)$ as generators and the axioms as relations.
Then the points of the spectrum correspond to completely prime filters of the frame.
However, the first group of axioms (\ref{equ:SigmaA1}) do not use infinitary disjunctions,
and so could be taken as presenting a finitary distributive lattice.
\begin{definition}
 Let $A$ be a commutative G*-algebra.
 We write $L_{A}$ for the distributive lattice presented by generators
 $D(a)$ ($a\in A_{sa}$) subject to the above relations (\ref{equ:SigmaA1}).
 (In~\cite{coquand05}, $L_A$ is referred to as $\mathrm{Spec}_{r}(A_{sa})$.)
\end{definition}

Unlike the frame $F_A$, the lattice $L_A$ is constructed geometrically from $A$.
This is not noted explicitly in~\cite{coquand05},
but is a consequence of the way it is presented by generators and relations
because geometric constructions include free algebras, and generating and factoring out congruences.
The techniques are as described in \cite{PHLCC}.
It is, in fact, possible to give a description that is more concrete than that of the general universal algebra,
though we shall not need it here.
\cite{EntSys} shows how to construct, geometrically, a free distributive lattice as a quotient
of a double finite powerset, in this case $\mathcal{FF}A_{sa}$.
\cite{coquand05} then gives an explicit concrete description of when, in $L_{A}$, we have
\[
   D(a_1) \wedge \cdots \wedge (a_{n}) \leq D(b_1) \vee \cdots \vee D(b_{m})\text{.}
\]

It follows that points $x$ of the spectrum can also be represented as subsets of $x\subseteq L_{A}$.
Each subset corresponds to a map from $L_{A}$ to the subobject classifier $\Omega$,
and respecting the axioms (\ref{equ:SigmaA1}) then amounts to saying that this map preserves
meets and joins (all finitary), in other words that $x$ is a \textbf{prime filter} of $L_{A}$.
It is a filter if it is up-closed and closed under finite meets,
and it is prime if whenever it contains a finite join, then it also contains one of the elements joined.

At this point we can thus describe the points of the spectrum of $A$ geometrically as prime filters $x$
of $L_{A}$, not arbitrary ones, but those respecting axiom (\ref{equ:SigmaA2}), in other words such that if $D(a)\in x$ then $D(a-r)\in x$
for some $0<r\in\mathbb{Q}$. This is slightly awkward because, although we have reduced much to the
lattice, we still have to refer explicitly to the elements of $A$. The next stage will remove
this awkwardness, and at the same time give access to a general geometric
treatment of compact regular locales.

\begin{definition}
A distributive lattice $L$ is said to be \textbf{normal} if whenever $a\vee b = \top$
then there are $x$ and $y$ such that $a\vee y = x\vee b = \top$ and $x\wedge y=\bot$.
Defining $a'\wi a$ ($a'$ \textbf{well inside} $a$) if there is $y$ such that $a\vee y = \top$ and $a'\wedge y = \bot$,
then another way to express normality is that if $a\vee b = \top$, then there is some $a'\wi a$
with $a'\vee b = \top$.
We also write $\dd a$ for the set $\{a'\mid a'\wi a\}$.

If $L$ is a normal distributive lattice, then a prime filter $x\subseteq L$ is \textbf{regular}
if whenever $a\in x$, then $a'\in x$ for some $a'\wi a$.
\end{definition}

The theory of regular ideals of a normal distributive lattice $L$ is geometric.
We could describe this by a propositional theory along the lines of that used earlier for $\Sigma_A$,
but to illustrate the use of geometric mathematics we present a predicate theory that describes
the regular ideals directly.
It has a predicate symbol $x\subseteq L$, and axioms as follows.
\begin{subequations}\label{equ:RSpec}
\begin{align}
  \top & \vdash x(\top)                         \label{equ:RSpeca}\\
  x(a) & \wedge x(b) \vdash_{ab} x(a \wedge b)  \label{equ:RSpecb}\\
  x(\bot) & \vdash \bot                         \label{equ:RSpecc}\\
  x(a \vee b) & \vdash_{ab} x(a) \vee x(b)      \label{equ:RSpecd}\\
  x(a) & \vdash_{a} (\exists a')(a'\wi a \wedge x(a')) \label{equ:RSpece}
\end{align}
\end{subequations}
(Note that the logical symbols $\top,\wedge,\bot,\vee$ are overloaded here,
denoting both operators in the lattice $L$ and logical connectives.
The two usages are syntactically quite different, though clearly the
axioms set up a semantic connection between them.)
Axioms (\ref{equ:RSpeca},\ref{equ:RSpecb}) say that $x$ is a filter,
(\ref{equ:RSpecc},\ref{equ:RSpecd}) that it is prime,
and (\ref{equ:RSpece}) that it is regular.

This is presented as a predicate theory, and so, in principle, has a classifying
topos that might not be localic.
However, it \emph{is} localic, and this is evident from the fact that no new sorts
are declared in the signature.
The cautious reader can write down an equivalent propositional theory explicitly --
this process is discussed in \cite{LocTopSp} for the Dedekind reals.

We write $\RSpec L$ (the \textbf{regular spectrum} of $L$) for the locale just defined,
whose points are the regular prime filters of $L$.

Coquand~\cite{coquand05} proved
(i) $L_{A}$ is normal, and
(ii) axiom (\ref{equ:SigmaA2}) is equivalent to the regularity axiom $a\vdash \bigvee_{a'\wi a} a'$.
This is important as it implies that the spectrum $\Sigma_{A}$ is isomorphic to $\RSpec L_{A}$,
the regular spectrum of $L_{A}$.

This completes our geometric construction of the spectrum from the C*-algebra.
We replaced the non-geometric commutative C*-algebra by the more general and geometric commutative G*-algebra.
The spectrum of this G*-algebra $A$ is constructed geometrically by first constructing the normal distributive lattice $L_{A}$ and subsequently constructing its regular spectrum.

If $f:A\to B$ is a homomorphism of commutative G*-algebras, then geometricity ensures that it gives
a lattice homomorphism $L_{f}: L_{A}\to L_{B}$, and (contravariantly) a map
$\Sigma_{f}:\Sigma_{B} \to \Sigma_{A}$.
If $y$ is a regular prime filter of $L_{B}$, then $\Sigma_{f}(y)$ is its
inverse image $L_{f}^{-1}(y)$.
If $f$ is an inclusion, we shall generally write
$L_{AB}$ for $L_{f}$ and $\rho_{BA}$ for $\Sigma_{f}$.

In the next two lemmas we shall consider formal expressions
$\phi(x_1,\ldots,x_n) = \phi(x_i)_{1}^{n}$
built using finite meets and finite joins from generators
$D(x_i)$, where $x_i$ is a variable ranging over $A_{sa}$.
From \cite{coquand05} we know that $\phi(a_i -r)_{1}^{n} \wi \phi(a_i)_{1}^{n}$
if $0<r\in\mathbb{Q}$.
\cite[Corollary 1.7]{coquand05} also shows that if $\phi$ is a finite join
of generators, and $\phi(a_i)_{1}^{n}=1$ in $L_A$ for some elements
$a_i \in A_{sa}$, then $\phi(a_i -r)_{1}^{n} = 1$ for some $0<r\in\mathbb{Q}$.
This clearly extends to the case where $\phi$ is a finite meet of finite joins
of generators, and hence to arbitrary distributive lattice expressions.

\begin{lemma}\label{lemma:WellInsideShrink}
 Let $A$ be a commutative G*-algebra, and suppose
 $v \wi u = \phi(a_i)_{1}^{n}$ in $L_A$.
 Then there is some $0<r\in\mathbb{Q}$ such that
 $v \leq \phi(a_i -r)_{1}^{n}$.
\end{lemma}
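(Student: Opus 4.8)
The plan is to combine the definition of ``well inside'' with the distributive-lattice shrinking result that extends \cite[Corollary 1.7]{coquand05} (the one quoted just before the lemma, valid for arbitrary distributive lattice expressions). The purely lattice-theoretic fact I shall lean on is that in any distributive lattice, $a \vee b = \top$ together with $c \wedge b = \bot$ forces $c \leq a$; indeed $c = c \wedge (a \vee b) = (c \wedge a) \vee (c \wedge b) = c \wedge a$.

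First I would unfold the hypothesis $v \wi u$: by definition there is a witness $y \in L_A$ with $u \vee y = \top$ and $v \wedge y = \bot$. Since $L_A$ is the distributive lattice \emph{generated} by the symbols $D(a)$, $a \in A_{sa}$, the element $y$ can be written as a finite distributive-lattice expression $\psi(b_j)_{1}^{m}$ in suitable $b_j \in A_{sa}$. Then $\phi(x_i)_{1}^{n} \vee \psi(z_j)_{1}^{m}$ is again a distributive-lattice expression, now in the $n+m$ variables $x_i, z_j$, and substituting $a_i, b_j$ gives $u \vee y = \top$.

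Next I would apply the quoted shrinking result to this combined expression: since it evaluates to $\top$, shrinking all arguments by a single $0 < r \in \mathbb{Q}$ keeps the value at $\top$, so $\phi(a_i - r)_{1}^{n} \vee \psi(b_j - r)_{1}^{m} = \top$. Writing $y_{-r} := \psi(b_j - r)_{1}^{m}$, I note $y_{-r} \leq y$: this is immediate from the fact (also quoted above the lemma) that $\psi(b_j - r)_{1}^{m} \wi \psi(b_j)_{1}^{m}$, since well-inside implies $\leq$; alternatively one checks directly that $b_j - r \leq b_j$ yields $D(b_j - r) \leq D(b_j)$ using axioms (\ref{equ:SigmaA1b}) and (\ref{equ:SigmaA1c}), and that $\psi$ is monotone. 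Consequently $v \wedge y_{-r} \leq v \wedge y = \bot$.

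Finally, applying the distributive-lattice fact from the first paragraph to $\phi(a_i - r)_{1}^{n} \vee y_{-r} = \top$ and $v \wedge y_{-r} = \bot$ gives $v \leq \phi(a_i - r)_{1}^{n}$, which is exactly the assertion. The step I expect to be the main obstacle is the second one: recognising that the well-inside witness $y$ may be absorbed, as a distributive-lattice expression, into a single expression together with $\phi$, so that one invocation of the shrinking result decreases $\phi$ and $y$ \emph{simultaneously} by a common $r$. Once this observation is in place the remaining manipulations in $L_A$ are routine.
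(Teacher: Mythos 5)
Your proof is correct and follows essentially the same route as the paper's: there too the well-inside witness is written as a lattice expression $\psi$ in the generators (the paper merely pads the variable list so $\phi$ and $\psi$ share the same arguments $a_i$, where you keep separate $b_j$'s), the extended shrinking result is applied once to the combined join $\phi\vee\psi$ to get a common $r$, and the conclusion follows from $\psi(a_i-r)_{1}^{n}\leq w$ together with $v\wedge w=\bot$. The only difference is cosmetic: you spell out the absorption step $a\vee b=\top$, $c\wedge b=\bot\Rightarrow c\leq a$, which the paper leaves implicit.
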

\begin{proof}
 We have $u\vee w = 1$ and $v\wedge w = 0$ for some $w$,
 and -- extending the list of $a_i$s as necessary --
 we can write $w = \psi(a_i)_{1}^{n}$.
 By the previous discussion we can find $0<r\in\mathbb{Q}$ such that
 $\phi(a_i -r)_{1}^{n}\vee \psi(a_i -r)_{1}^{n} = 1$,
 and then, knowing that $\psi(a_i -r)_{1}^{n}\leq w$,
 we conclude that $v\leq \phi(a_i -r)_{1}^{n}$.
\end{proof}

The following lemma will be important when we come to describe the external spectrum.

\begin{lemma}
\label{lemma:WellInsideInc}
 Let $f:A\to B$ be a homomorphism of G*-algebras, and suppose $u\in L_{A}$
 and $v\wi L_{f}(u)$ in $L_{B}$.
 Then there is some $u'\wi u$ such that $v\leq L_{f}(u')$.
\end{lemma}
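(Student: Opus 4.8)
The plan is to reduce everything to generators and then invoke Lemma~\ref{lemma:WellInsideShrink} inside $L_B$. Since $L_A$ is generated as a distributive lattice by the symbols $D(a)$ ($a\in A_{sa}$), I would first write $u = \phi(a_i)_1^n$ for some distributive lattice expression $\phi$ built from finite meets and joins of generators $D(a_i)$, with $a_i\in A_{sa}$. The induced homomorphism $L_f$ sends each generator $D(a)$ to $D(f(a))$, so that $L_f(u) = \phi(f(a_i))_1^n$ in $L_B$.

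The hypothesis then reads $v \wi \phi(f(a_i))_1^n$ in $L_B$. Applying Lemma~\ref{lemma:WellInsideShrink} inside $L_B$ (to the elements $f(a_i)\in B_{sa}$) produces some $0<r\in\mathbb{Q}$ with $v \leq \phi(f(a_i)-r)_1^n$. The crucial point is that $f$ is a homomorphism of G*-algebras, hence in particular $\scal$-linear and unit-preserving, so $f(a_i - r) = f(a_i) - r$ for each $i$. Because $L_f$ is a lattice homomorphism sending $D(a_i - r)$ to $D(f(a_i)-r)$, this gives $\phi(f(a_i)-r)_1^n = L_f\left(\phi(a_i - r)_1^n\right)$, and therefore $v \leq L_f\left(\phi(a_i-r)_1^n\right)$.

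To finish, I would set $u' = \phi(a_i - r)_1^n \in L_A$. The fact recalled just before Lemma~\ref{lemma:WellInsideShrink}, namely $\phi(a_i - r)_1^n \wi \phi(a_i)_1^n$ whenever $0<r\in\mathbb{Q}$, shows $u' \wi u$, while the previous step gives $v \leq L_f(u')$, as required.

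The main obstacle is really one of bookkeeping: ensuring that the ``shrinking'' operation $a_i \mapsto a_i - r$ commutes both with $f$ and with $L_f$. This hinges on $f$ respecting the $\scal$-algebra structure (so that the rational shift $-r$ is preserved) and on $L_f$ being defined on generators by $D(a)\mapsto D(f(a))$, both of which are guaranteed by the geometricity of the construction $A\mapsto L_A$ noted earlier. Everything else is a direct transport of Lemma~\ref{lemma:WellInsideShrink} from $L_B$ back to $L_A$ along $f$.
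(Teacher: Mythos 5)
Your proof is correct and follows essentially the same route as the paper's: write $u = \phi(a_i)_{1}^{n}$ in generators, apply Lemma~\ref{lemma:WellInsideShrink} to $v \wi \phi(f(a_i))_{1}^{n}$ in $L_B$, and take $u' = \phi(a_i - r)_{1}^{n} \wi u$. Your explicit verification that the rational shift commutes with $f$ and $L_f$ (via $\scal$-linearity and the action of $L_f$ on generators) is exactly the bookkeeping the paper's terser proof leaves implicit.
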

\begin{proof}
 Writing $u = \phi(a_i)_{1}^{n}$, we find that
 $v\wi \phi(f(a_i))_{1}^{n}$.
 Applying Lemma~\ref{lemma:WellInsideShrink}, we can find $0<r\in\mathbb{Q}$
 such that
 $v\leq \phi(f(a_i)-r)_{1}^{n} = L_f(u')$
 where $u' = \phi(a_i -r)_{1}^{n}\wi u$.
\end{proof}

The opens of $\RSpec L$ are described in \cite{qtopos} as the regular ideals of $L$, i.e. those ideals
$I$ such that if $\dd a\subseteq I$ then $a\in I$.
This follows from the coverage theorem,
a general result of topos-valid locale theory, but unfortunately it is not geometric.
The problem lies in the regularity condition, which amounts to
\[
   (\forall a')(a'\wi a \rightarrow I(a')) \vdash_{a} I(a)
\]
where the left-hand side is not a geometric formula.

In order to gain access to geometric methods, we replace the regular ideals by the
``rounded $\wi$-ideals''.
They differ in the way they use elements of $L$ to represent opens:
a regular ideal comprises those elements of $L$ that are less than the open,
whereas the $\wi$-ideal comprises those that are well inside.

The concept of rounded $\wi$-ideal is geometric,
and so there is a locale $\RIdl (L,\wi)$ (or, for short, $\RIdl L$)%
\footnote{Note that the `R' in $\RIdl$ stands for \emph{rounded},
not \emph{regular}.
}
whose points are the rounded $\wi$-ideals.
This much follows \cite{Infosys} from the simple fact that $(L,\wi)$ is a \emph{continuous information system} --- that is,
$\wi$ is an idempotent relation:
transitive ($\mathord{\wi}\circ\mathord{\wi} \subseteq \mathord{\wi}$) and
interpolative ($\mathord{\wi}\circ\mathord{\wi} \supseteq \mathord{\wi}$).

What is important in our situation is that the points of the locale $\RIdl L$ form a frame,
and indeed the frame of opens for $\RSpec L$.
This is expressed succinctly in Theorem~\ref{thm:RSpecLocComp},
that $\RIdl L$ is the locale exponential $\sier^{\RSpec L}$,
which allows us to use Theorem~\ref{thm:loccptloc}.

Given any idempotent relation $<$ on a set $X$,
\cite{Infosys} in effect defines a \textbf{rounded $<$-ideal} to be a model of the
geometric theory with one predicate symbol $I\subseteq X$ and axioms
\begin{align*}
  a' < a \wedge I(a) & \vdash_{a' a} I(a')     \\
  \top & \vdash (\exists a) I(a)             \\
  I(a') \wedge I(a'') & \vdash_{a' a''} (\exists a) (a' < a \wedge a'' < a \wedge I(a))
\end{align*}
The corresponding locale is $\RIdl(X,<)$,%
\footnote{
  \cite{Infosys} calls it $\Idl(X,<)$, but we want to stress the roundedness.
}
and its opens are the rounded upsets of X under $<$.

The proof of the following theorem is instructive.
Although the theorem is stated entirely for locales,
the proof is fairly simple if one digresses into non-localic toposes.
Note that the discussion in Section~\ref{sub:geometricmath} of opfibrations
holds equally for maps (geometric morphisms) between toposes.
A map from $\sier$ to a topos $\mathcal{E}$ is a pair of points of
$\mathcal{E}$, together with a specialization morphism between them.

\begin{theorem}\label{thm:RIdlopfib}
  Let $X$ be a locale, and $(P,<)$ a continuous information system in $\Sh(X)$.
  Let $p: Y \to X$ be the external description of the internal locale $\RIdl(P,<)$.
  Then $p$ is an opfibration.
\end{theorem}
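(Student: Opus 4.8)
The plan is to exhibit the opfibration structure directly, by constructing the fibre maps $r_{x_\bot x_\top}$ on rounded ideals and checking that they assemble into the left adjoint demanded by the generic definition of opfibration in Subsection~\ref{sub:geometricmath}. First I would describe the points of $Y$ geometrically. Since the construction $\RIdl$ is geometric it is computed fibrewise, so the fibre of $p$ over a point $x$ of $X$ is $\RIdl(x^{*}(P,<))$, and a point of $Y$ is a pair $(x,I)$ where $x$ is a point of $X$ and $I$ is a rounded $<$-ideal of the pulled-back information system $x^{*}(P,<)$. The essential geometric input is the transport of tokens along a base specialization: a specialization $x_\bot \spe x_\top$ in $X$ is a point of $X^\sier$, and pulling $(P,<)$ back along the generic specialization over $X^\sier$ produces, since $P$ is an object (a discrete locale, whose external description is a local homeomorphism), a canonical map $\tau_{x_\bot x_\top}\colon x_\bot^{*}P \to x_\top^{*}P$ on stalks. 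Because $<$ is a subobject of $P\times P$ preserved by this change of base, $\tau_{x_\bot x_\top}$ carries $x_\bot^{*}(<)$ into $x_\top^{*}(<)$. Everything here is geometric, i.e.\ preserved under pullback, which is what licenses working with the generic specialization rather than with individual global points.

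Next I would define the fibre map. Set $r_{x_\bot x_\top}(I_\bot) = \{\, b \in x_\top^{*}P \mid \exists a \in I_\bot,\ b < \tau_{x_\bot x_\top}(a)\,\}$, the rounded $<$-ideal over $x_\top$ generated by the image $\tau_{x_\bot x_\top}(I_\bot)$. I would verify the three rounded-ideal axioms: inhabitedness and directedness follow because $I_\bot$ is inhabited and directed and $\tau_{x_\bot x_\top}$ preserves $<$; downward closure under $<$ follows from transitivity of $<$; and roundedness follows from interpolativity, i.e.\ the idempotency of $<$ that is exactly the continuity of the information system. Minimality is then immediate: if $I_\top$ is any rounded ideal over $x_\top$ with $\tau_{x_\bot x_\top}(I_\bot)\subseteq I_\top$, then downward closure of $I_\top$ forces $r_{x_\bot x_\top}(I_\bot)\subseteq I_\top$, so $r_{x_\bot x_\top}(I_\bot)$ is the $\spe$-least such ideal.

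Finally I would verify the opfibration condition in the generic form. The forgetful map $Y^\sier \to (X^\bot)^{*}Y$ sends a specialization $(x_\bot,I_\bot)\spe(x_\top,I_\top)$ to $(x_\bot \spe x_\top, I_\bot)$, and the candidate left adjoint over $(X^\bot)^{*}Y$ is the geometric transformation $(x_\bot \spe x_\top, I_\bot)\mapsto\bigl((x_\bot,I_\bot)\spe(x_\top, r_{x_\bot x_\top}(I_\bot))\bigr)$. Two things must be checked: that this genuinely lands in $Y^\sier$, that is, that $(x_\bot,I_\bot)\spe(x_\top, r_{x_\bot x_\top}(I_\bot))$ is a bona fide specialization in $Y$; and that it is left adjoint to the forgetful map. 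Both reduce to establishing that $(x_\bot,I_\bot)\spe(x_\top,I_\top)$ in $Y$ holds iff $x_\bot \spe x_\top$ in $X$ and $\tau_{x_\bot x_\top}(I_\bot)\subseteq I_\top$, equivalently $r_{x_\bot x_\top}(I_\bot)\spe I_\top$ in the fibre $x_\top^{*}Y$. This is precisely the shape of Proposition~\ref{prp:opfibspec}, so once the left adjoint is in place that proposition recovers the specialization order, confirming that $r_{x_\bot x_\top}$ are the fibre maps of the opfibration.

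The main obstacle is the first item of the last step: constructing the comparison specialization in the total bundle $Y$ from the base specialization and the token transport, and doing so geometrically. Concretely, one must show that $\tau_{x_\bot x_\top}(I_\bot)$ actually induces a map $\sier$ into the bundle realizing $(x_\bot,I_\bot)\spe(x_\top, r_{x_\bot x_\top}(I_\bot))$, which is where the digression into the (non-localic) toposes $\Sh(X^\sier)$ and $\Sh(Y)$ pays off, the continuous-information-system axioms (especially interpolativity, for roundedness) doing the real work. The adjunction inequalities and the minimality of $r_{x_\bot x_\top}(I_\bot)$ are then routine, but verifying that the whole assignment is a map of locales and is preserved under change of base is the delicate part.
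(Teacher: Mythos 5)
Your fibre maps are the right ones---$r_{x_\bot x_\top}(I_\bot)=\dd\tau_{x_\bot x_\top}(I_\bot)$ is exactly the paper's $\RIdl(f)(J)=\dd f(J)$, your stalk transport $\tau_{x_\bot x_\top}$ being precisely the homomorphism of continuous information systems that the base specialization induces---and your verifications of the rounded-ideal axioms and of minimality are sound. But there is a genuine gap at exactly the point you flag as ``the main obstacle''. Everything hinges on the biconditional: $(x_\bot,I_\bot)\spe(x_\top,I_\top)$ in $Y$ iff $x_\bot\spe x_\top$ in $X$ and $\tau_{x_\bot x_\top}(I_\bot)\subseteq I_\top$. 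Your justification of it is circular: you appeal to Proposition~\ref{prp:opfibspec} ``once the left adjoint is in place'', but that proposition \emph{presupposes} the opfibration structure, and the adjoint cannot be put in place---indeed cannot even be seen to be a map of locales landing in $Y^\sier$---until the biconditional is known. A definition of $r_{x_\bot x_\top}$ on (generalized) points does not by itself assemble into a locale map $(X^\bot)^{*}Y\to Y^\sier$ over the base, and nothing in your argument supplies this; you defer it as ``the delicate part'' without resolving it.

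The paper closes this gap with a device your proposal lacks: it passes to the classifying topos $\mathcal{E}$ of continuous information systems and the generic bundle $q:\mathcal{F}\to\mathcal{E}$ classifying triples $(Q,<,J)$. There the delicate fact becomes automatic, because specialization morphisms between points of a classifying topos are exactly homomorphisms of the classified models: a specialization in $\mathcal{F}$ \emph{is} a homomorphism of triples, so ``$\RIdl(f)(J)\spe J'$ iff $f$ extends to a homomorphism $(Q_\bot,<,J)\to(Q_\top,<,J')$'' requires no localic computation; that $\RIdl(f)$ is a genuine locale map is imported from \cite[Proposition~2.10]{LocCompB} rather than verified by hand; and the result descends to $p$ by pulling $q$ back along the classifying map $(P,<):X\to\mathcal{E}$, opfibrations being pullback-stable by geometricity of their definition. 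Note also that your parenthetical ``(non-localic) toposes $\Sh(X^\sier)$ and $\Sh(Y)$'' misidentifies where the digression happens: those toposes are localic by construction, since $X^\sier$ and $Y$ are locales; the non-localic toposes the paper's proof exploits are $\mathcal{E}$ and $\mathcal{F}$. To repair your argument, replace the appeal to Proposition~\ref{prp:opfibspec} at the crux with this classifying-topos step (after which your explicit description of $\tau$ and $r$ is a useful unfolding of what the generic argument gives over $X$), or else carry out an explicit frame-level verification that $r$ is a locale map satisfying the unit and counit inequalities---which is substantially harder and is the work the generic argument is designed to avoid.
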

\begin{proof}
  Let $\mathcal{E}$ be the classifying topos for the theory of continuous information
  systems $(Q,<)$,
  and let $q:\mathcal{F}\to\mathcal{E}$ be the bundle in which the fibre over $(Q,<)$
  is the rounded ideal completion of $(Q,<)$.
  Hence $\mathcal{F}$ classifies triples $(Q,<,J)$ where $(Q,<)$ is a continuous information
  system and $J$ is a rounded ideal for it.
  We first show that $q$ is an opfibration.

  Let $f:(Q_\bot,<)\to (Q_\top,<)$ be a homomorphism of continuous information systems,
  in other words, a function $f:Q_\bot\to Q_\top$ that preserves $<$.
  By \cite[Proposition~2.10]{LocCompB}, we obtain a map
  $\RIdl(f): \RIdl(Q_\bot,<) \to \RIdl(Q_\top,<)$,
  mapping $J \mapsto \dd f(J)$.
  We have $\RIdl(f)(J)\spe J'$ iff $f$ extends to a homomorphism (necessarily unique) from
  $(Q_\bot,<,J)$ to $(Q_\top,<,J')$,
  and from this it follows that the map
  $((Q_\bot,<),(Q_\top,<),J) \mapsto (Q_\bot,<,J,Q_\top,<,\RIdl(f)(J))$
  provides the left adjoint required for an opfibration; see Proposition~\ref{prp:opfibspec}.

  Returning to the situation as in the statement,
  by the definition of classifying topos the continuous information system in $\Sh(X)$
  gives a map $(P,<): X \to \mathcal{E}$,
  and $p$ is the pullback of $q$ along it.
  From the geometricity of the definition of opfibration, we see that the property is
  preserved under pullback.
\end{proof}

To see that $\wi$ is an idempotent relation (which is well known),
first note that in any distributive lattice $L$ we have that if
$b'\leq a' \wi a \leq b$ then $b'\wi b$, and if $a' \wi a$ then $a' \leq a$.
From this is follows that $\wi$ is transitive.
If moreover $L$ is normal, then $\wi$ is interpolative.
For if $a' \wi a$ with $y$ as in the definition, then by normality $a''\vee y = \top$
for some $a'' \wi a$, and then also $a' \wi a''$.
Another useful fact is that if $a_i \wi a$ ($i=1,2$), then $a_1 \vee a_2 \wi a$.
We find also that the rounded $\wi$-ideals of $L$ are exactly the ordinary ideals $I$
(with respect to $\leq$) that are rounded in the sense that $I = \dd I$.

\begin{proposition}
  Let $L$ be a normal distributive lattice.
  Then there is a bijection between its regular ideals and its $\wi$-ideals.
\end{proposition}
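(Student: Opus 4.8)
The plan is to exhibit the bijection by the two evident maps and show they are mutually inverse. Given a regular ideal $J$ I would send it to $\dd J = \{a' \mid a' \wi b \text{ for some } b \in J\}$, and given a rounded $\wi$-ideal $I$ I would send it to $\{a \mid \dd a \subseteq I\}$. Intuitively $\dd J$ records the elements \emph{well inside} the open cut out by $J$, while $\{a \mid \dd a \subseteq I\}$ recovers the elements \emph{below} that open, so the two should undo one another. Throughout I would lean on the facts established just before the statement: $\wi$ is transitive, interpolative (via normality), sandwiched by $\leq$ on both sides (if $b' \leq a' \wi a \leq b$ then $b' \wi b$, and $a' \wi a$ gives $a' \leq a$), and $a_1, a_2 \wi a$ implies $a_1 \vee a_2 \wi a$.

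Before anything else I would isolate the one genuinely non-trivial ingredient, a \textbf{splitting lemma}: if $c \wi a \vee b$ then $c \leq a_1 \vee b_1$ for some $a_1 \wi a$ and $b_1 \wi b$. This is where normality does the real work, and I expect it to be the main obstacle; everything after it is bookkeeping. Starting from a witness $y$ with $(a \vee b) \vee y = \top$ and $c \wedge y = \bot$, I would rewrite the top element as $a \vee (b \vee y) = \top$ and apply normality in the form ``$a \vee b = \top$ yields some $a' \wi a$ with $a' \vee b = \top$'' to extract $a_1 \wi a$ with $a_1 \vee b \vee y = \top$; a second application to $b \vee (a_1 \vee y) = \top$ gives $b_1 \wi b$ with $a_1 \vee b_1 \vee y = \top$. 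Distributivity then finishes it: since $c \wedge y = \bot$, we get $c = c \wedge (a_1 \vee b_1 \vee y) = c \wedge (a_1 \vee b_1) \leq a_1 \vee b_1$.

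With the splitting lemma in hand I would check that the two maps are well defined. That $\dd J$ is a rounded $\wi$-ideal uses only the sandwiching property (down-closure), the join property for $\wi$ together with $J$ being an ideal (closure under joins), and interpolation (roundedness). That $\{a \mid \dd a \subseteq I\}$ is down-closed and regular again uses sandwiching and interpolation; its closure under finite joins is exactly the point where the splitting lemma is needed, since $c \wi a_1 \vee a_2$ with $a_1, a_2$ in the set yields $c \leq c_1 \vee c_2$ with $c_i \in \dd a_i \subseteq I$, whence $c \in I$ because $I$ is an ideal.

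Finally I would verify the two round trips. For a regular ideal $J$, the inclusion $J \subseteq \{a \mid \dd a \subseteq \dd J\}$ is immediate, and the reverse uses first that $\dd J \subseteq J$ (as $a' \wi b$ gives $a' \leq b \in J$) and then regularity of $J$ applied to $\dd a \subseteq \dd J \subseteq J$. For a rounded $\wi$-ideal $I$, the inclusion $\dd\{a \mid \dd a \subseteq I\} \subseteq I$ is immediate, and the reverse uses roundedness of $I$ to produce, for each $b \in I$, some $c \in I$ with $b \wi c$; since $c' \wi c$ forces $c' \leq c \in I$ we have $\dd c \subseteq I$, so $c$ lies in $\{a \mid \dd a \subseteq I\}$ and hence $b \in \dd\{a \mid \dd a \subseteq I\}$. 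This closes both loops and establishes the bijection.
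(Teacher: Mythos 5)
Your proof is correct and takes essentially the same route as the paper: the same pair of maps $J\mapsto\dd J$ and $I\mapsto\{a\mid \dd a\subseteq I\}$, with your ``splitting lemma'' being precisely the paper's double application of normality (producing $a'_i\wi a_i$ with $a'_1\vee a'_2\vee y=\top$) to establish join-closure, and interpolativity of $\wi$ handling roundedness and regularity in the same way. Your write-up simply makes explicit the round-trip verifications that the paper compresses into ``it is now easy to show this gives a bijection''.
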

\begin{proof}
  First, if $J$ is a regular ideal then $\dd J$ is a $\wi$-ideal.
  The only mildly non-obvious part of this is that if $a'_i\wi a$ ($i=1,2$),
  using $y_{i}$ as in the definition, then $a'_{1}\vee a'_{2} \wi a$ using $y_{1}\wedge y_{2}$.

  Next, if $I$ is a $\wi$-ideal then we write
  \[
    r\langle I \rangle = \{a\in L \mid \dd a \subseteq I\}\text{.}
  \]
  To show this is an ideal, suppose $\dd a_{i} \subseteq I$ and $b\wi a_1 \vee a_2$
  with $y$ as in the definition.
  Then we can find $a_{i}' \wi a_i$ with $a'_1 \vee a'_2 \vee y = \top$
  and it follows that $b \wi a'_1 \vee a'_2 \in I$.
  It is regular, because if $\dd a \subseteq r\langle I \rangle$ then $\dd\dd a \subseteq I$;
  but $\dd\dd a = \dd a$ by interpolativity of $\wi$,
  so $a\in r\langle I\rangle$.
  Indeed, it is the smallest regular ideal containing $I$.

  It is now easy to show this gives a bijection, with $r\langle\dd J\rangle = J$ and
  $\dd r\langle I \rangle = I$ (using $\dd I = I$).
\end{proof}

We now prove our central result in this section,
which gives a geometric account of the opens of $\RSpec L$.
One can compare it with Stone's representation theorem for Boolean algebras $B$
(see \cite{StoneSp}), where the frame of opens for the spectrum of $B$ is the ideal
completion of $B$. In fact, our result generalizes that, since every Boolean algebra
is normal, with $\wi$ coinciding with $\leq$.

\begin{theorem}
\label{thm:RSpecLocComp}
  If $L$ is a normal distributive lattice, then $\RIdl L \cong \sier^{\RSpec L}$.

  The evaluation map $\RIdl L \times \RSpec L \to \sier$ takes $(I,x)$ to the top point $\top$
  whenever the ideal $I$ meets the filter $x$.
\end{theorem}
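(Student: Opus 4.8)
The plan is to verify directly that $\RIdl L$, equipped with the stated evaluation map, satisfies the universal property that defines the exponential $\sier^{\RSpec L}$; by Theorem~\ref{thm:loccptloc} this simultaneously establishes that the exponential exists and that $\RSpec L$ is locally compact. Concretely, I would show that for every locale $Z$ the transposition
\[
 \mathrm{map}(Z,\RIdl L) \;\cong\; \mathrm{map}(Z\times\RSpec L,\sier)
\]
is a natural bijection implemented by $\ev$. Working geometrically, a map $Z\to\RIdl L$ is a rounded $\wi$-ideal of $L$ internal to $\Sh(Z)$ (a generalized point of $\RIdl L$), while a map $Z\times\RSpec L\to\sier$ is simply an open $W$ of $Z\times\RSpec L$. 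So everything reduces to a natural bijection between opens of $Z\times\RSpec L$ and rounded $\wi$-ideals over $Z$, compatible with evaluation. It is essential to argue over an arbitrary base $Z$ (equivalently, internally), and not merely with global points, since $\RSpec L$ need not be spatial and opens are not determined by the global points they contain.

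First I would read off the frame presentation of $\RSpec L$ from the axioms (\ref{equ:RSpec}): the basic opens $[a]=\{x\mid a\in x\}$, for $a\in L$, generate $\opens(\RSpec L)$ and are closed under finite meets and joins, with $[a\wedge b]=[a]\wedge[b]$ and $[a\vee b]=[a]\vee[b]$ by (\ref{equ:RSpecb},\ref{equ:RSpecd}), together with $[\top]=1$, $[\bot]=0$, and the regularity relation $[a]=\bigvee_{a'\wi a}[a']$ coming from (\ref{equ:RSpece}). Since $\{[a]\}$ is a basis closed under finite meets, $\{V\times[a]\}$ is a basis for $Z\times\RSpec L$, so any open $W$ satisfies $W=\bigvee_a\bigl(U_a\times[a]\bigr)$ where $U_a:=\bigvee\{V\in\opens Z\mid V\times[a]\le W\}$. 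The heart of the argument is to compute the closure conditions characterising the saturated families $(U_a)$: relation (\ref{equ:RSpecd}) forces $U_{a\vee b}=U_a\wedge U_b$ and downward closure $a\le b\Rightarrow U_b\le U_a$, while the regularity relation (\ref{equ:RSpece}) forces $U_a=\bigwedge_{a'\wi a}U_{a'}$. Reading these back as conditions on $I_z=\{a\mid z\in U_a\}$, they say precisely that $I$ is a \emph{regular} ideal (downward closed, join-closed, inhabited, and $\dd a\subseteq I\Rightarrow a\in I$) internal to $\Sh(Z)$. Thus $W\mapsto(U_a)$ is a natural bijection between opens of $Z\times\RSpec L$ and regular ideals of $L$ over $Z$.

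To reach $\RIdl L$ I would then invoke the preceding Proposition, the bijection between regular ideals and $\wi$-ideals, $J\mapsto\dd J$ with inverse $I\mapsto r\langle I\rangle$. This bijection is built from geometric operations, so it is preserved under change of base and applies internally over every $Z$; it therefore identifies the regular ideals over $Z$ with the rounded $\wi$-ideals over $Z$, i.e.\ with the generalized points of $\RIdl L$. It remains to check that this identification is the one induced by $\ev$. For a regular prime filter $x$ and the corresponding pair $I=\dd J$ one has ``$I$ meets $x$ iff $J$ meets $x$'': the forward direction is immediate from $\dd J\subseteq J$, and the converse uses regularity of $x$ (if $a\in J\cap x$ pick $a'\wi a$ with $a'\in x$, so $a'\in\dd J\cap x$). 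Hence $\ev(\dd J,x)=\top$ exactly when $x$ lies in the open associated with $J$, which is the value at $(z,x)$ of $W=\bigvee_a U_a\times[a]$; so $\ev\circ(\tilde g\times\id_{\RSpec L})=g$, and uniqueness of the transpose $\tilde g$ follows from the bijectivity of $W\leftrightarrow J$. This verifies the universal property and yields $\RIdl L\cong\sier^{\RSpec L}$, with $\ev$ sending $(I,x)$ to $\top$ iff $I$ meets $x$.

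I expect the main obstacle to be the saturation computation in the middle paragraph, specifically handling the regularity relation $[a]=\bigvee_{a'\wi a}[a']$: it is what turns the bare ``ideal'' conditions into the \emph{regularity} condition $\dd a\subseteq I\Rightarrow a\in I$, and getting its direction right is exactly the subtle point that distinguishes the saturated opens (regular ideals) from the points of $\RIdl L$ (the $\wi$-ideals). The remaining care is bookkeeping to ensure the whole chain is natural in $Z$ and compatible with $\ev$, so that a point-level correspondence genuinely upgrades to a localic isomorphism; this is where geometricity of both the frame presentation and the regular-ideal/$\wi$-ideal correspondence does the essential work.
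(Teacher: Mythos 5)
Your proposal is correct in strategy, but it takes a genuinely different route from the paper. The paper does not verify the universal property directly: it invokes \cite[Theorem 12.7]{PPExp} --- if $\sier^{X}$ exists and is homeomorphic to the double powerlocale $\mathbb{P}W$, then $\sier^{W}$ exists and is homeomorphic to $X$ --- with $X=\RIdl L$ and $W=\RSpec L$, and then computes $\mathbb{P}(\RSpec L)$ via the Double Coverage Theorem \cite{UniCharPP} applied to the DL-site $(L,\cov)$ with covers $a\cov\dd a$. There the only real work is checking that this coverage is meet- and join-stable (with the subtlety that $a'\wi a$ does \emph{not} give $a'\wedge b\wi a\wedge b$; one interpolates $c\wi a'\wi a$ instead), after which the splitting upsets are exactly the $\wi$-rounded upsets of $L$, i.e.\ the opens of $\RIdl L$. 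Your route --- checking the exponential's universal property over an arbitrary stage $Z$, computing $\opens(Z\times\RSpec L)$ as internal regular ideals, and transporting across the Proposition's regular-ideal/$\wi$-ideal bijection --- is sound and buys a self-contained argument independent of the powerlocale machinery; note that your ``heart of the argument'' saturation computation is essentially a relative form of the coverage theorem, which the paper itself cites (as topos-valid) just before this theorem, so you could legitimately shortcut that paragraph by applying the coverage theorem internally in $\Sh(Z)$. The paper's route is shorter given the cited results and yields the identification $\mathbb{P}(\RSpec L)\cong\sier^{\RIdl L}$ as a by-product. One correction to your justification rather than your argument: the map $I\mapsto r\langle I\rangle$ is \emph{not} geometric --- the condition $\dd a\subseteq I$ hides a universal quantifier, exactly the non-geometricity the paper flags for regular ideals --- so ``built from geometric operations, hence preserved under change of base'' is not quite right. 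What you actually need, and have, is that the Proposition's proof is constructive and hence topos-valid, so the bijection holds internally in $\Sh(Z)$ for each $Z$ separately, while the one direction used to define transposes, $J\mapsto\dd J$, \emph{is} geometric; since the universal property is verified stage-by-stage against the fixed evaluation map, no further naturality in $Z$ is required beyond what this gives automatically.
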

\begin{proof}
We use \cite[Theorem 12.7]{PPExp},
which states that if $X$ and $W$ are locales for which $\sier^{X}$ exists
and is homeomorphic to the double powerlocale $\mathbb{P}W$,
then $\sier^{W}$ exists and is homeomorphic to $X$.
In our case we take $X$ to be $\RIdl L$,
which is exponentiable with its opens the rounded upsets of $L$,
and $W$ to be $\RSpec L$, so it remains to calculate the double powerlocale $\mathbb{P}W$
and show that its points are again the rounded upsets of $L$.

We use the Double Coverage Theorem \cite[Theorem 7]{UniCharPP} to calculate its points.
As explained in \cite{CompLocFT}, the calculation is an analogue of one for the lower powerlocale
that is directly derived from the usual Coverage Theorem \cite{StoneSp}.
Given a site in the form of a meet-semilattice $S$ with a meet-stable coverage $\cov$,
the corresponding locale has for its points the filters $F$ of $S$ that ``split'' $\cov$
in the sense that if $a \cov U$ and $a\in F$ then $F$ meets $U$.
Then its lower powerlocale has for its points the upsets of $S$
(thus we drop the requirement that meets should be respected) that split $\cov$.
For the double powerlocale we require a \emph{DL-site},
in which $S$ is a distributive lattice and the coverage is stable for both meets and joins:
the points for the locale are the prime filters that split $\cov$,
and for the double powerlocale they are the upsets that split $\cov$.

To apply this theorem we must show that the covers $a \cov \dd a$ in $L$ are meet- and join-stable.
Meet-stability means that $a\wedge b$ is covered by $\{a'\wedge b\mid a'\wi a\}$.
It is \emph{not} true that if $a'\wi a$ then $a'\wedge b\wi a\wedge b$.
However, knowing that $a\wedge b\cov\dd(a\wedge b)$,
we argue that if $c\wi a\wedge b$ then $c\wi a' \wi a$ for some $a'$, and so $c \leq a'\wedge b$.
Join-stability is similar, but with joins.

It is clear that an upset splits $\cov$ iff it is rounded with respect to $\wi$,
and the result follows. The final part derives from the definition of the evaluation map
for $\RIdl L$.
\end{proof}

Now that the notion of commutative G*-algebra has been settled, and the geometricity of the construction of its spectrum has been demonstrated, the following theorem is immediate.

\begin{theorem} \label{thm:bundle}
Let $\ua$ be a unital commutative C*-algebra in a topos $\mathcal{E}$ and let $\uS_{\ua}$ be the Gelfand spectrum of $\ua$.
Let $\Sh_{\mathcal{E}}(\uS_{\ua})\to\mathcal{E}$ be the unique localic geometric morphism corresponding to the locale
$\uS_{\ua}$, and let $f:\mathcal{F}\to\mathcal{E}$ be any geometric morphism. Consider the commutative G*-algebra $f^{\ast}(\ua)$,
and let $\uS_{f^{\ast}(\ua)}$
be its spectrum in $\mathcal{F}$.
Then we have a pullback square
\[ \xymatrix{ \Sh_{\mathcal{F}}(\uS_{f^{\ast}(\ua)}) \ar[d] \ar[r] &
\Sh_{\mathcal{E}}(\uS_{\ua}) \ar[d]\\
\mathcal{F} \ar[r]_{f} & \mathcal{E} } \]
\end{theorem}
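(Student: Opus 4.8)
The plan is to prove this as a corollary of the geometricity of the \emph{entire} spectrum construction, which has been assembled piece by piece in Subsections~\ref{sub:CStarAlg} and \ref{sub:bohr}, and then to convert that geometricity into the stated pullback by means of the correspondence between internal locales and bundles recalled in Subsection~\ref{sub:geometricmath}. The strategy is deliberately to never touch the frame of opens, but to track only the geometric \emph{presentation} of the spectrum.

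First I would recall the construction in its geometric form. Viewing the commutative C*-algebra $\ua$ as a commutative G*-algebra --- restricting scalars to $\scal$ and taking the positive cone of $\ua\sa$ to be the squares --- one forms the normal distributive lattice $L_{\ua}$ presented by generators $D(a)$ and the relations (\ref{equ:SigmaA1}), and then $\uS_{\ua}\cong\RSpec L_{\ua}$ is the locale presented by the geometric theory (\ref{equ:RSpec}) of regular prime filters. Both stages are geometric: $A\mapsto L_A$ is a free-algebra-plus-generated-congruence construction, and $L\mapsto\RSpec L$ is presented by a (localic) geometric theory. Since $f^{\ast}$ is the inverse image of a geometric morphism, it preserves each of these ingredients up to isomorphism. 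Hence $f^{\ast}(\ua)$ is again a commutative G*-algebra, with $L_{f^{\ast}(\ua)}\cong f^{\ast}(L_{\ua})$ as normal distributive lattices, and the theory (\ref{equ:RSpec}) presenting $\uS_{f^{\ast}(\ua)}$ over $L_{f^{\ast}(\ua)}$ is precisely the $f^{\ast}$-image of the one presenting $\uS_{\ua}$ over $L_{\ua}$. In short, the presentation of the spectrum commutes with $f^{\ast}$.

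Finally I would convert this into the pullback square. Under the Joyal--Tierney correspondence (\cite{joyaltierney84}, Subsection~\ref{sub:geometricmath}), the internal locale $\uS_{\ua}$ in $\mathcal{E}$ is the same datum as the localic geometric morphism $\Sh_{\mathcal{E}}(\uS_{\ua})\to\mathcal{E}$, regarded as a bundle. The key geometricity principle from \cite{PPExp} is that applying $f^{\ast}$ to a frame presentation corresponds to pulling the associated bundle back along $f$. Combining this with the previous step, the pullback of $\Sh_{\mathcal{E}}(\uS_{\ua})\to\mathcal{E}$ along $f$ is the bundle presented by $f^{\ast}$ applied to the presentation of $\uS_{\ua}$ --- which we have just identified as the presentation of $\uS_{f^{\ast}(\ua)}$ --- so the pullback is $\Sh_{\mathcal{F}}(\uS_{f^{\ast}(\ua)})\to\mathcal{F}$. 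That is exactly the assertion that the displayed square is a pullback.

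The main obstacle, and the reason all of this machinery was needed, is that $f^{\ast}$ does \emph{not} preserve frames, so one cannot argue directly that $f^{\ast}$ carries the spectrum frame of $\ua$ to the spectrum frame of $f^{\ast}(\ua)$. The difficulty is sidestepped entirely by working only with the geometric presentation (generators, relations, and the geometric theory of points), by using that $f^{\ast}$ preserves such presentations, and by invoking the \cite{PPExp} identification of ``$f^{\ast}$ applied to a presentation'' with ``pullback of the corresponding bundle.'' Once the geometricity of $A\mapsto L_A$ and of $L\mapsto\RSpec L$ is in hand, the only remaining content is this careful threading of the presentation/bundle correspondence, which is why --- as the preceding remark claims --- the theorem is essentially immediate.
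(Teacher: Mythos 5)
Your proof is correct and is essentially the paper's own argument made explicit: the paper offers no separate proof, declaring the theorem ``immediate'' once the geometricity of the G*-algebra structure and of the two-stage construction $A\mapsto L_{A}\mapsto\RSpec L_{A}$ has been established, with the pullback square then following from the Joyal--Tierney bundle correspondence and the result of \cite{PPExp} that applying $f^{\ast}$ to a frame presentation corresponds to pulling back the associated bundle. You have simply spelled out the steps the paper leaves implicit, including the correct caveat that $f^{\ast}$ does not preserve frames, which is exactly why the presentation-level argument is needed.
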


We finish the section by observing that Theorem~\ref{thm:RSpecLocComp} and the results
in~\cite{Coquand/Spitters:integrals-valuations,Vickers:Riesz} provide entirely analoguous results for the geometric theory of integrals on a
G*-algebra. Following~\cite{Bohrification} these internal integrals correspond to quasi-states externally.

\subsection{Computation of the spectrum} \label{sub:calcspec1}

In this subsection we fix a unital C*-algebra $A$ and calculate, in various forms,
the spectrum as it arises in the copresheaf approach.

To summarize the notation, $\mathcal{C}(A)$ is the poset of unital commutative C*-subalgebras of $A$, partially ordered by inclusion,
and the topos $[\mathcal{C}(A),\Set]$ is the category of copresheaves on $\mathcal{C}(A)$,
equivalent to the sheaf topos $\Sh(\Idl\mathcal{C}(A))$.
$\ua$ is the tautological copresheaf mapping each context $C$ to itself,
and mapping each arrow $D\to C$ in $\mathcal{C}(A)$ to the inclusion $D\hookrightarrow C$.
It is a unital commutative C*-algebra, internal to the topos $[\mathcal{C}(A),\Set]$
and leads -- internally -- to a normal distributive lattice $\underline{L}_{\ua}$
and a Gelfand spectrum $\uS_{\ua} \cong \underline{\RIdl}(\underline{L}_{\ua})$.
This compact regular locale is of interest to the copresheaf model, as it internally plays the role of a phase space.

We seek the external representions of the Gelfand spectrum and other internal locales as bundles,
which we shall typically denote by removing underlinings --
sometimes in an \emph{ad hoc} way.
The external representation of $\uS_{\ua}$ will be $p: \Sigma_{\ua}\to\Idl\mathcal{C}(A)$.

First we calculate the locale $\Sigma_{\ua}$.
We characterize both its points (Theorem~\ref{thm:RSpecExternalPoints}),
in line with the geometric approach,
and its opens (Theorem~\ref{prp:extspec}).
We find it convenient to use local compactness to describe the opens geometrically as points of
the exponential $\sier^{\Sigma_{\ua}}$,
and this is easily translated into a description of the frame $\opens\Sigma_{\ua}$.

The explicit description of the opens was previously given Subsection 2.2 of \cite{Wolters}.
Although the proof given there has the advantage of not using any
advanced topos-theoretic methods, it has some disadvantages too.
The proof hides the role of geometric reasoning.
As we have seen in the previous subsection,
it is because of geometricity that the spectrum is so closely related to the
spectra of the commutative C*-subalgebras
(or, for reader familiar with the work of Butterfield and Isham,
why the spectrum is so closely related to the spectral presheaf of their approach).
Another disadvantage of the proof in \cite{Wolters} is that it is not clear how it can be generalized
when the topos $[\mathcal{C}(A),\Set]$ is replaced by a different topos.
This point is also related to the geometricity being hidden.

\begin{theorem}
\label{thm:RSpecExternalPoints}
  The points of $\Sigma_{\ua}$ can be geometrically described as the pairs
  $(I,x)$ where $I$ is an ideal of $\cC{A}$ and $x$ is a subset of $\coprod_{C\in \cC{A}}L_{C}$
  satisfying the following properties:
  \begin{enumerate}
  \item
    If $(C,a)\in x$ then $C\in I$.
  \item
    If $C\subseteq D\in I$ then $(C,a)\in x$ iff $(D,L_{CD}(a))\in x$.
  \item
    If $C\in I$ then $\{a\in L_{C}\mid(C,a)\in x\}$ is a regular prime filter in $L_{C}$.
  \end{enumerate}
\end{theorem}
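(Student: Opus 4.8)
The plan is to exploit the bundle picture of Subsection~\ref{sub:geometricmath} together with the fibrewise computation of the spectrum. Viewing $p:\Sigma_{\ua}\to\Idl\cC{A}$ as a bundle, a generalized point of $\Sigma_{\ua}$ is a pair $(I,y')$ where $I$ is a point of the base $\Idl\cC{A}$ and $y'$ is a point of the fibre $I^{\ast}\Sigma_{\ua}$. The points of the base are exactly the ideals of $\cC{A}$, by the identification of flat presheaves over a poset with its ideals, so the first component already supplies the $I$ of the statement. The whole correspondence will be geometric because each ingredient---the bundle decomposition, the fibre computation, and the filter description below---is preserved under change of base.

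First I would compute the fibre. By Theorem~\ref{thm:bundle} the spectrum is computed fibrewise, so $I^{\ast}\Sigma_{\ua}\cong\Sigma_{I^{\ast}\ua}$. Since $I$ is an ideal, hence filtered, the G*-algebra $I^{\ast}\ua$ is the directed colimit $\mathop{\mathrm{colim}}_{C\in I}C$ of the subalgebras in $I$. Geometricity of the lattice construction $L_{(-)}$ (Subsection~\ref{sub:bohr}) then gives $L_{I^{\ast}\ua}\cong\mathop{\mathrm{colim}}_{C\in I}L_{C}$, the filtered colimit along the transition maps $L_{CD}$. Writing $\eta_{C}:L_{C}\to L_{I^{\ast}\ua}$ for the colimit injections, a point $y'$ of the fibre is precisely a regular prime filter of this colimit.

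Next I would translate such a filter into the set $x$. Set $x=\{(C,a)\mid C\in I,\ a\in L_{C},\ \eta_{C}(a)\in y'\}$, a subset of $\coprod_{C\in\cC{A}}L_{C}$. Condition (1) holds by construction. Condition (2) holds because $\eta_{C}(a)=\eta_{D}(L_{CD}(a))$ whenever $C\subseteq D$, so membership is unchanged along the transition maps (and $C\in I$ follows from $D\in I$ since $I$ is downward closed). For condition (3), each $F_{C}:=\{a\mid(C,a)\in x\}=\eta_{C}^{-1}(y')$ is the preimage of a prime filter under a bounded-lattice homomorphism, hence a prime filter. Conversely, given $(I,x)$ satisfying (1)--(3), the sets $F_{C}$ form a compatible family of prime filters that glue, via a routine filtered-colimit argument, to a single prime filter $y'$ on $\mathop{\mathrm{colim}}_{C\in I}L_{C}$, and condition (2) guarantees this gluing is well defined.

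The main obstacle is the equivalence between regularity of $y'$ and regularity of each $F_{C}$, and this is exactly what Lemma~\ref{lemma:WellInsideInc} was prepared for. The key observation is that $\wi$ in a filtered colimit is detected at a finite stage: $\eta_{C}(a')\wi\eta_{C}(a)$ holds iff $L_{CD}(a')\wi L_{CD}(a)$ in some $L_{D}$ with $C\subseteq D\in I$. The direction ``each $F_{C}$ regular $\Rightarrow y'$ regular'' is then immediate, since a witness $a'\wi a$ in $L_{C}$ maps forward to $\eta_{C}(a')\wi\eta_{C}(a)$. For the converse, given $a\in F_{C}$ I would use regularity of $y'$ to obtain $b\wi\eta_{C}(a)$ with $b\in y'$; representing $b$ at a stage $D\supseteq C$ yields $b_{0}\wi L_{CD}(a)$ in $L_{D}$, and Lemma~\ref{lemma:WellInsideInc} produces $a'\wi a$ in $L_{C}$ with $b_{0}\leq L_{CD}(a')$. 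Then $\eta_{C}(a')=\eta_{D}(L_{CD}(a'))\geq\eta_{D}(b_{0})=b\in y'$ forces $a'\in F_{C}$, establishing regularity of $F_{C}$. This closes the correspondence, and since every step is geometric, the description of points as pairs $(I,x)$ is geometric as claimed.
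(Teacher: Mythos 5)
Your proposal is correct and follows essentially the same route as the paper's proof: the bundle decomposition of a point into an ideal $I$ and a point of the fibre, geometricity of $L_{(-)}$ giving $L_{I}\cong\mathop{\mathrm{colim}}_{C\in I}L_{C}$, encoding the filter as a subset of $\coprod_{C}L_{C}$, and the identical use of Lemma~\ref{lemma:WellInsideInc} to transfer regularity from the colimit back to each stage $L_{C}$. You merely spell out more explicitly the gluing of the compatible family $F_{C}$ and the finite-stage detection of $\wi$, both of which the paper leaves implicit.
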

\begin{proof}
Recall from Subsection~\ref{sub:geometricmath} that a point of $\Sigma_{\ua}$ is equivalent to a pair $(I,x)$ where $I$ is an ideal of $\cC{A}$ (i.e. a point of the locale $\Idl\mathcal{C}(A)$), and $x$ is a point of
$I^{*}\Sigma_{\ua}$, in other words, a regular prime filter of $L_{I}$.
Now $I$ is the filtered colimit (actually here a directed join) of the principal ideals $\downset C$ for
$C\in I$, and geometric constructions preserve filtered colimits.
It follows that
\[
  L_{I} \cong \mathop{\mathrm{colim}}_{C\in I} L_{C}\text{.}
\]
Each element of $L_{I}$ is the image of some $(C,a)$, so to specify the filter $x$ of $L_{I}$
it suffices to specify $x\subseteq \coprod_{C\in \cC{A}}L_{C}$.
Then condition (1) says that $x$ is in the colimit over $C\in I$, and (2) expresses the fact that equality in
the filtered colimit derives from $(C,a)=(D,L_{CD}(a))$.
The final condition expresses the regular prime filter property. However, some care is needed with regularity. Actually, regularity says that
if $(C,a)\in x$ then there is some $D\in I$ with $C\subseteq D$ and some $b\wi L_{CD}(a)$
in $L_{D}$ such that $(D,b)\in x$.
Lemma~\ref{lemma:WellInsideInc} says that in this case there is some $a'\in L_{C}$ with $a'\wi a$ and
$b \leq L_{CD}(a')$, and we then have that $(C,a')\in x$.
\end{proof}

Note how this geometric description provides a subbase for the topology (a set of generators for the frame)
as the pairs $(C,a)$, where $C\in \cC{A}$ and $a\in L_{C}$. The point $(I,x)$ is in this open iff $C\in I$
and $(C,a)\in x$.

We now use local compactness of $\uS_{\ua}$ to give an explicit description in geometric form of the internal frame, as $\underline{\sier}^{\uS_{\ua}}$.
We shall write $\sier^{\uS_{\ua}}$ for its external description.

\begin{theorem}\label{thm:RSpecInternalOpens}
  The points of $\sier^{\uS_{\ua}}$ are pairs $(I,U)$ where $I$ is an ideal of $\cC{A}$
  and $U$ is a subset of $\coprod_{C\in\cC{A}}L_{C}$ satisfying the following properties.
  \begin{enumerate}
  \item
    If $(C,a)\in U$ then $C\in I$.
  \item
    If $C\subseteq D\in I$ then $(C,a)\in U$ iff $(D,L_{CD}(a))\in U$.
  \item
    If $C\in I$ then $\{a\in L_{C}\mid(C,a)\in U\}$ is an ideal in $L_{C}$.
  \item
    If $(C,a)\in U$ then there is some $(D,b)\in U$ with $C\subseteq D$ and $L_{CD}(a)\wi b$.
  \end{enumerate}
\end{theorem}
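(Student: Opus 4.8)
The plan is to run the same argument as in the proof of Theorem~\ref{thm:RSpecExternalPoints}, replacing ``regular prime filter'' by ``rounded $\wi$-ideal'' throughout. The starting point is Theorem~\ref{thm:RSpecLocComp}, applied internally in $[\cC{A},\Set]$ to the normal distributive lattice $\underline{L}_{\ua}$: since $\uS_{\ua}\cong\underline{\RSpec}(\underline{L}_{\ua})$, it yields an internal isomorphism $\underline{\sier}^{\uS_{\ua}}\cong\underline{\RIdl}(\underline{L}_{\ua})$. Taking external descriptions, $\sier^{\uS_{\ua}}$ is thus the bundle over $\Idl\cC{A}$ associated with $\underline{\RIdl}(\underline{L}_{\ua})$, and I can read off its points fibrewise.

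First I would invoke the bundle picture of Subsection~\ref{sub:geometricmath}: a point of $\sier^{\uS_{\ua}}$ is a pair $(I,U)$, where $I$ is a point of the base $\Idl\cC{A}$ (an ideal of $\cC{A}$) and $U$ is a point of the fibre $I^{*}\underline{\RIdl}(\underline{L}_{\ua})$. Because both the lattice $L$ and the $\RIdl$ construction are geometric, they commute with the pullback to the fibre, so this fibre is $\RIdl(L_I)$ with $L_I = I^{*}\underline{L}_{\ua}\cong\mathop{\mathrm{colim}}_{C\in I}L_C$ --- exactly the filtered colimit (over the directed join $I=\bigcup_{C\in I}\downset C$) already computed in the proof of Theorem~\ref{thm:RSpecExternalPoints}. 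Hence $U$ is a rounded $\wi$-ideal of $L_I$, which, as recalled above, is the same as an ordinary ideal $U$ of $L_I$ satisfying $U=\dd U$.

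The remaining work is to represent $U$ as a subset $U\subseteq\coprod_{C\in\cC{A}}L_C$ and to translate the rounded-ideal conditions into (1)--(4). Conditions (1) and (2) are the purely structural clauses --- support over $I$, and compatibility of representatives under the maps $L_{CD}$ --- which say precisely that $U$ names a subset of the colimit $L_I$; they are identical to the corresponding clauses in Theorem~\ref{thm:RSpecExternalPoints}. Condition (3) is the ideal property: restricting $U$ to each fibre $L_C$ gives an ideal of $L_C$, and conversely a family satisfying (1)--(3) is closed under $\leq$ and finite joins in $L_I$, since any finite configuration can be pushed to a common stage $D\in I$ by directedness. Condition (4) is roundedness $U\subseteq\dd U$: if $(C,a)\in U$ then $a$ is well inside some element of $U$, witnessed at a stage $D\supseteq C$ by $L_{CD}(a)\wi b$ with $(D,b)\in U$; the reverse inclusion $\dd U\subseteq U$ is automatic, since $\wi$ implies $\leq$ and $U$ is downward closed.

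I expect the only delicate point to be condition (4) and its interaction with the filtered colimit: I must check that roundedness of $U$ in $L_I$ can always be witnessed at a single stage $D\in I$, using directedness of $I$ together with the fact that the finitary well-inside relation transfers faithfully through the filtered colimit. Note that, in contrast with the regularity clause in Theorem~\ref{thm:RSpecExternalPoints}, here the witness need not be pulled back to the original fibre $L_C$ --- so Lemma~\ref{lemma:WellInsideInc} is not needed --- because roundedness looks upward toward larger elements rather than downward; this makes (4) slightly simpler than the regularity condition of the points theorem.
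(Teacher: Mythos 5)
Your proposal is correct and takes essentially the same route as the paper: apply Theorem~\ref{thm:RSpecLocComp} internally to identify $\underline{\sier}^{\uS_{\ua}}$ with $\underline{\RIdl}(\underline{L}_{\ua})$, then repeat the fibrewise filtered-colimit analysis of Theorem~\ref{thm:RSpecExternalPoints} with rounded $\wi$-ideals of $L_{I}\cong\mathop{\mathrm{colim}}_{C\in I}L_{C}$ in place of regular prime filters. You even isolate the same subtlety the paper flags, namely that roundedness must be witnessed at a later stage $D\supseteq C$ because Lemma~\ref{lemma:WellInsideInc} does not apply here, which is precisely why condition (4) is stated cross-stage rather than fibrewise.
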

\begin{proof}
  The first part is an application of Theorem~\ref{thm:RSpecLocComp}.
  The rest is done in exactly the same way as Theorem~\ref{thm:RSpecExternalPoints}, except we have
  to take care in expressing the fact that $L_{I}$ is rounded under $\wi$, because
  Lemma~\ref{lemma:WellInsideInc} does not apply.
  Note that in the case where $I$ is a principal ideal $\downset D$,
  $U$ is equivalent to a $\wi$-ideal of $L_{D}$ and hence to an open of $\Sigma_{D}$.
\end{proof}

We can now give an explicit description of the frame $\opens\Sigma_{\ua}$.

\begin{theorem} \label{prp:extspec}
$\Sigma_{\ua}$ is locally compact. The points of $\sier^{\Sigma_{\ua}}$ are the
$\cC{A}$-indexed families $U$, where each $U_{C}$ is a $\wi$-ideal of $L_{C}$,
and if $C\subseteq D$, then $L_{CD}(U_{C})\subseteq U_{D}$.
\end{theorem}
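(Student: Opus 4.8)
The plan is to prove the two claims in sequence: local compactness of $\Sigma_{\ua}$ via the composition theorem, and the description of its opens as sections of an exponential bundle.

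First I would deduce local compactness. The base locale $X=\Idl\cC{A}$ is the ideal completion of a poset: it is an algebraic domain whose compact points are the principal ideals $\downset C$, and in which every ideal is the directed join of the principal ideals it contains. Such a domain is continuous, so $X$ is locally compact. Internally in $\Sh(X)=[\cC{A},\Set]$ the Gelfand spectrum $\uS_{\ua}$ is compact regular, hence locally compact. Applying Theorem~\ref{thm:locexpexp} to the external description $p:\Sigma_{\ua}\to X$ then yields that $\Sigma_{\ua}$ is locally compact, and by Theorem~\ref{thm:loccptloc} the exponential $\sier^{\Sigma_{\ua}}$ exists.

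Next I would identify the points of $\sier^{\Sigma_{\ua}}$ with sections of a bundle. By the construction in the proof of Theorem~\ref{thm:locexpexp}, $\sier^{\Sigma_{\ua}}\cong E$, whose points are exactly the sections $\sigma$ of $q:\sier^{\uS_{\ua}}\to X$, the external description of the internal exponential $\underline{\sier}^{\uS_{\ua}}$. Such a section assigns, continuously in the ideal $I$, a point of the fibre $q^{\ast}\{I\}$, which by Theorem~\ref{thm:RSpecInternalOpens} is the datum $(I,U)$ and, for a principal ideal $I=\downset C$, is precisely a $\wi$-ideal $U_C$ of $L_C$. Since every ideal is the directed join of the principal ideals below it and the colimit $L_I\cong\mathop{\mathrm{colim}}_{C\in I}L_C$ is preserved geometrically, a section is determined by its restriction to the principal ideals, i.e.\ by the family $(U_C)_{C\in\cC{A}}$; conversely, as in the proof of Theorem~\ref{thm:RSpecExternalPoints}, a compatible family reassembles into a section.

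The remaining task, which I expect to be the main obstacle, is to pin down the compatibility condition on the family. By Theorem~\ref{thm:RSpecLocComp} the internal exponential $\underline{\sier}^{\uS_{\ua}}$ is the rounded-ideal locale $\RIdl\underline{L}_{\ua}$, so by Theorem~\ref{thm:RIdlopfib} the bundle $q$ is an opfibration; over the specialization $\downset C\spe\downset D$ (which holds precisely when $C\subseteq D$) its fibre map is $\RIdl(L_{CD})$, namely $J\mapsto\dd L_{CD}(J)$. A section must preserve specialization, so Proposition~\ref{prp:opfibspec} forces $\dd L_{CD}(U_C)\spe U_D$; and since specialization of $\wi$-ideals is inclusion this reads $\dd L_{CD}(U_C)\subseteq U_D$. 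Finally I would check, using that a lattice homomorphism preserves $\wi$ and that $U_D$ is rounded ($\dd U_D=U_D$), that this is equivalent to $L_{CD}(U_C)\subseteq U_D$, the stated condition. The delicate points are fixing the direction of the specialization order on $X$, verifying that the opfibrational fibre map is exactly $\dd L_{CD}(-)$, and the roundedness manipulation that replaces $\dd L_{CD}(U_C)\subseteq U_D$ by $L_{CD}(U_C)\subseteq U_D$.
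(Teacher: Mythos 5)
Your proposal is correct, and its first half coincides with the paper's proof: the paper likewise gets local compactness of $\Sigma_{\ua}$ and the identification of the points of $\sier^{\Sigma_{\ua}}$ with sections of $\sier^{\uS_{\ua}}\to\Idl\cC{A}$ from the exponentiability result (the paper cites Theorem~\ref{thm:expexp}, you cite its localic special case Theorem~\ref{thm:locexpexp} together with Theorem~\ref{thm:loccptloc} — same content). Where you genuinely diverge is in extracting the compatibility condition. The paper's proof is a one-line appeal to Lemma~\ref{lem:functormodel}: a section, being a map out of $[\cC{A},\Set]$ over the base, is a $\cC{A}$-indexed family of models with homomorphisms over each $C\subseteq D$, and the homomorphism condition on the predicate symbol unpacks \emph{directly} to $L_{CD}(U_C)\subseteq U_D$, with no mention of $\dd$ or specialization. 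You instead route through the opfibration machinery: Theorem~\ref{thm:RIdlopfib} plus Proposition~\ref{prp:opfibspec} give $\dd L_{CD}(U_C)\subseteq U_D$, which you then reconcile with the stated condition via roundedness and the fact that lattice homomorphisms preserve $\wi$. That reconciliation does go through — note only that the direction $\dd L_{CD}(U_C)\subseteq U_D\Rightarrow L_{CD}(U_C)\subseteq U_D$ uses roundedness of $U_C$ (pick $a\wi a'$ in $U_C$, so $L_{CD}(a)\wi L_{CD}(a')$), while the converse needs merely that $U_D$ is a $\leq$-downset — so your attribution of the roundedness hypotheses is slightly loose but all the ingredients are present. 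Amusingly, your detour is precisely the technique the paper saves for Theorem~\ref{thm:RSpecInternalFrame}, where it shows $\RIdl(L_{D_1 D_2})=\rho^{\ast}_{D_2 D_1}$; so your proof buys an explicit picture of the specialization/fibre-map structure at the cost of extra verification, while the paper's buys brevity by letting Lemma~\ref{lem:functormodel} absorb it. One small mis-citation: your ``reassembly'' of a compatible family into a section is really the content of Lemma~\ref{lem:functormodel} (functors into models equal internal models), not of the proof of Theorem~\ref{thm:RSpecExternalPoints}, which assembles a single point over one ideal; your surrounding geometricity argument is nonetheless sound, since specialization-monotone families indexed by $\cC{A}$ do yield continuous maps exactly by that lemma.
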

\begin{proof}
By Theorem~\ref{thm:expexp} we know that $\Sigma_{\ua}$ is locally compact, and that
$\sier^{\Sigma_{\ua}}$ has the sections of $\sier^{\uS_{\ua}}$ for its points.
Since a section is a map from $\Idl \cC{A}$, we can use Lemma~\ref{lem:functormodel}
to see that the sections are as described in the statement.
\end{proof}

In terms of frames, it is now immediate that $\opens \Sigma_{\ua}$ is isomorphic to the subframe of $\prod_{C\in\mathcal{C}(A)}\opens \Sigma_{C}$
comprising those elements $U$ such that for any
$C\subseteq D$, $\rho^*_{DC}(U_{C})\subseteq U_{D}$.

Unlike the case for the external $\sier^{\Sigma_{\ua}}$,
it is a non-trivial calculation to calculate the internal frame
$\underline{\opens}(\uS_{\ua}) = \underline{\pt}(\underline{\sier}^{\uS_{\ua}})$
We should like to emphasize that this non-trivial calculation is usually unnecessary.
However much one might like to think that the internal locale ``is'' the internal frame,
it is usually better to identify the locale with its external description.

\begin{theorem}\label{thm:RSpecInternalFrame}
  The internal frame $\underline{\opens}(\uS_{\ua})$ is given
  as a copresheaf by
  \[
    \underline{\opens}(\uS_{\ua}) =
      \{U\in\prod_{C\subseteq D}\opens\Sigma_{D} \mid
         \text{ if } C \subseteq D_{1}\subseteq D_{2} \text{ then }
         \rho^{\ast}_{D_{2}D_{1}}(U_{D_{1}}) \subseteq U_{D_{2}}\}.
  \]
\end{theorem}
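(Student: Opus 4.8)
The plan is to obtain the copresheaf $\underline{\opens}(\uS_{\ua})$ from the Joyal--Tierney external description of internal frames, and then to match it against the explicit description of the external frame $\opens\Sigma_{\ua}$ already obtained in Theorem~\ref{prp:extspec}. Write $X=\Idl\cC{A}$ and let $p:\Sigma_{\ua}\to X$ be the external description of $\uS_{\ua}$. Since frames are \emph{not} geometric, we cannot compute $\underline{\opens}(\uS_{\ua})$ fibrewise in the way the \emph{points} were computed in Theorem~\ref{thm:RSpecExternalPoints} or the exponential in Theorem~\ref{prp:extspec}; instead we use the fact (\cite{joyaltierney84}, \cite[Section C1.6]{Elephant1}) that the internal frame of the bundle $p$, regarded as a sheaf on $X$, assigns to an open $W\in\opens X$ the frame $\{V\in\opens\Sigma_{\ua}\mid V\leq p^{\ast}W\}$ of opens of $\Sigma_{\ua}$ lying below $p^{-1}(W)$, with restriction along $W'\leq W$ given by $V\mapsto V\wedge p^{\ast}W'$.

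First I would translate this into the copresheaf picture. Under the equivalence $[\cC{A},\Set]\cong\Sh(X)$, the opens of $X$ are the up-closed subsets of the poset $\cC{A}$, and the value of a copresheaf at $C$ is, by Yoneda, its set of sections over the open corresponding to the representable $\cC{A}(C,-)$, namely $\upset C$; as a set of points this open is $\{I\mid C\in I\}$. Hence $\underline{\opens}(\uS_{\ua})(C)\cong\{V\in\opens\Sigma_{\ua}\mid V\leq p^{-1}(\upset C)\}$.

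Next I would feed in Theorem~\ref{prp:extspec}, which presents $\opens\Sigma_{\ua}$ as the subframe of $\prod_{D\in\cC{A}}\opens\Sigma_{D}$ consisting of the families $U=(U_{D})_{D}$ with $\rho^{\ast}_{D_2 D_1}(U_{D_1})\subseteq U_{D_2}$ whenever $D_1\subseteq D_2$. The open $p^{-1}(\upset C)=p^{\ast}(\upset C)$ is the family that equals $\top$ on $\upset C$ and $\bot$ elsewhere, so, because the order on this subframe is computed componentwise, $V\leq p^{-1}(\upset C)$ holds exactly when $U_{D}=\bot$ for every $D\not\supseteq C$. Such a family is determined by, and freely reconstructed from, its restriction $(U_{D})_{D\supseteq C}$: the compatibility conditions involving a $D\not\supseteq C$ hold automatically (as $\rho^{\ast}(\bot)=\bot$), while those with $C\subseteq D_1\subseteq D_2$ are precisely the stated condition. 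This yields the displayed description of $\underline{\opens}(\uS_{\ua})(C)$. Finally I would record that the copresheaf action for $C\subseteq C'$ is the evident restriction of families from $\upset C$ to $\upset C'$, which agrees with the sheaf restriction $V\mapsto V\wedge p^{\ast}(\upset C')$.

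The main obstacle is the very first step: unlike the points and the exponential $\sier^{\uS_{\ua}}$, the internal frame is a genuinely non-geometric object, so it is not transported by inverse-image functors and cannot be read off fibre-by-fibre. Everything hinges on invoking the Joyal--Tierney correspondence correctly and identifying the sheaf of opens with the assignment $W\mapsto\{V\in\opens\Sigma_{\ua}\mid V\leq p^{\ast}W\}$; once that external description is in hand, the remaining steps are the bookkeeping of translating between the Alexandrov opens of $\Idl\cC{A}$, copresheaf evaluation, and the product-of-frames picture of Theorem~\ref{prp:extspec}.
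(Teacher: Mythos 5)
Your proof is correct, but it takes a genuinely different route from the paper's. You invoke the Joyal--Tierney external description of internal frames directly: the internal frame of the localic bundle $p:\Sigma_{\ua}\to\Idl\cC{A}$ is the sheaf $W\mapsto\{V\in\opens\Sigma_{\ua}\mid V\leq p^{\ast}W\}$ (i.e.\ $p_{\ast}\Omega_{\Sh(\Sigma_{\ua})}$), evaluate it at the subterminal open $\upset C$ corresponding to the representable, and then simply truncate the product description of $\opens\Sigma_{\ua}$ from Theorem~\ref{prp:extspec} (in its $\rho^{\ast}$-form stated just after that theorem). The paper instead works internally: it identifies $\underline{\opens}(\uS_{\ua})$ as the discrete coreflection $\underline{\pt}(\underline{\sier}^{\uS_{\ua}})$, converts maps $\mathcal{Y}(C)\to\underline{\opens}(\uS_{\ua})$ into locale maps $\Idl(\upset C)\to\sier^{\uS_{\ua}}$ over the base (using iterated forcing to externalize $\mathcal{Y}(C)$), characterizes these as monotone families via the opfibration structure of $\RIdl$ bundles (Proposition~\ref{prp:opfibspec} and Theorem~\ref{thm:RIdlopfib}), and only then computes $\RIdl(L_{D_{1}D_{2}})=\rho^{\ast}_{D_{2}D_{1}}$ -- a computation your route inherits for free by citing the frame reformulation of Theorem~\ref{prp:extspec}, where it is already folded in. Your argument is shorter and more elementary given Theorem~\ref{prp:extspec}, and your bookkeeping (the componentwise order on the subframe, the compatibility conditions with $D_{1}\not\supseteq C$ holding automatically, the restriction maps being $V\mapsto V\wedge p^{\ast}(\upset C')$) is sound; the one step worth an explicit line is the identification of $p^{\ast}(\upset C)$ with the family that is $\top$ on $\upset C$ and $\bot$ elsewhere, which follows from the section picture of Theorem~\ref{prp:extspec} since $p^{\ast}(\upset C)$ corresponds to the section taking value $\top_{\Sigma_{D}}$ for $D\supseteq C$ and $\bot$ otherwise. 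The trade-off: your method is wedded to the poset/localic setting -- it exploits that $\mathcal{Y}(C)$ is subterminal so that sections over it are exactly the opens below $p^{\ast}(\upset C)$ -- whereas the paper's route via points and opfibrations transfers essentially verbatim to the non-localic functor-category generalization (Theorem~\ref{thm:RSpecInternalFrameFunctor}), where $\mathcal{Y}(C)$ is no longer subterminal and one would have to replace your ``opens below $p^{\ast}W$'' picture by subobjects of $p^{\ast}\mathcal{Y}(C)$.
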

\begin{proof}
  By Yoneda's Lemma,
  \[
    \underline{\opens}(\uS_{\ua})(C)
         \cong [\cC{A},\Set](\mathcal{Y}(C),\underline{\opens}(\uS_{\ua}),
  \]
  where $\mathcal{Y}:\cC{A}^{op}\to [\cC{A},\Set]$ is the Yoneda embedding.
  Now, internally, $\underline{\opens}(\uS_{\ua})$ is the set of points of
  $\underline{\sier}^{\uS_{\ua}}$, in other words its discrete coreflection.
  It follows that morphisms $\mathcal{Y}(C)\to \underline{\opens}(\uS_{\ua})$
  are equivalent to locale maps
  \[
    \mathcal{Y}(C)\to \underline{\sier}^{\uS_{\ua}}
        \cong \underline{\RIdl}(\underline{L}_{\ua})\text{,}
  \]
  and we can analyse these by their external representation.
  Externally, the local homeomorphism for $\mathcal{Y}(C)$ has bundle space $\Idl(\upset C)$.
  This can be proved by iterated forcing -- see Section~\ref{sub:toppreshsh}.
  Hence we seek maps $\Idl(\upset C) \to \sier^{\uS_{\ua}}$ over $\Idl \cC{A}$ and,
  by Lemma~\ref{lem:functormodel},
  the required maps are the same as monotone families $U_{D}$ ($C\subseteq D$), with each $U_{D}$
  in the fibre of $\sier^{\uS_{\ua}}$ over $\downset D$, in other words a rounded ideal of $L_D$.
  ``Monotone'' is with respect to the given order on $\cC{A}$ and the specialization order on
  $\sier^{\uS_{\ua}}$.
  Using Proposition~\ref{prp:opfibspec} and Theorem~\ref{thm:RIdlopfib},
  we see that if $C \subseteq D_{1}\subseteq D_{2}$,
  then we require $\RIdl(L_{D_1 D_2})(U_{D_{1}}) \subseteq U_{D_{2}}$.
  Now suppose $I \subseteq L_{D_1}$ is a $\wi$-ideal,
  and $y \subseteq L_{D_2}$ a regular prime filter.
  Then the point $\rho^{\ast}_{D_{2}D_{1}}(y) = L_{D_1 D_2}^{-1}(y)$ is in the open $I$ iff,
  as subsets of $L_{D_1}$, they \textbf{meet} -- they have inhabited intersection.
  Clearly this is equivalent to $L_{D_1 D_2}(I)$ meeting $y$,
  which in turn is equivalent to $y$ being in $\RIdl(L_{D_1 D_2})(I) = \dd L_{D_1 D_2}(I)$.
  It follows that $\RIdl(L_{D_1 D_2}) = \rho^{\ast}_{D_{2}D_{1}}$,
  and we have the description in the statement.
\end{proof}

\subsection{C*-algebras in functor categories} \label{sec:functor}

In this subsection we generalize Theorem~\ref{prp:extspec} to unital
commutative C*-algebras in toposes that are functor categories.
Subsequently, in Subsection~\ref{examples}, we use this result when we explore examples of functor categories
(other than $[\mathcal{C}(A),\Set]$, or $[\mathcal{C}(A)^{\text{op}},\Set]$)
which may be of interest to the topos approaches to quantum theory.
As a first step, we use presheaf semantics to identify all C*-algebras in functor categories.
The reader unfamiliar with presheaf semantics may want to consult \cite[Chapter VI]{maclanemoerdijk92}.

Let $\mathcal{C}$ be any small category. Below we prove the following:

\begin{proposition} \label{prp:internal}
The object $\ua$ (with additional structure $\underline{+},\underline{\cdot}, \underline{\ast},\underline{0}$)
is a C*-algebra in the topos $[\mathcal{C},\Set]$ iff it is given by a functor
$\ua:\mathcal{C}\to\mathbf{CStar}$, where $\mathbf{CStar}$ is the category of C*-algebras and
$\ast$-homomorphisms in $\Set$. The C*-algebra $\ua$ is commutative iff each $\ua(C)$ is
commutative. The algebra $\ua$ is unital iff every $\ua(C)$ is unital and for each $f:C\to D$, the
$\ast$-homomorphism $\ua(f):\ua(C)\to\ua(D)$ preserves the unit.
\end{proposition}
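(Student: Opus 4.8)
The plan is to read off the C*-algebra structure on $\ua$ stage by stage using the Kripke--Joyal (presheaf) semantics of $[\mathcal{C},\Set]$, after splitting the definition of C*-algebra into its three ingredients: the $\scal$-$\ast$-algebra structure, the norm, and completeness. The first ingredient is algebraic, hence geometric, and is handled by Lemma~\ref{lem:functormodel}; the remaining two are not geometric and must be analysed directly, exploiting the fact that in a presheaf topos the connectives $\wedge,\vee,\exists$ are computed pointwise while $\forall$ and $\Rightarrow$ quantify over all later stages $C\to D$.

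First I would dispose of the algebraic part. The operations $\underline{+},\underline{\cdot},\underline{\ast},\underline{0}$, together with scalar multiplication by the constant (and geometric) object $\scal$, are natural transformations, so at each $C$ they restrict to operations on the set $\ua(C)$, and their naturality is exactly the assertion that each transition map $\ua(f)\colon\ua(C)\to\ua(D)$ is a $\ast$-algebra homomorphism. As the axioms of a $\scal$-$\ast$-algebra are equational, they constitute a geometric theory, so Lemma~\ref{lem:functormodel} gives that $\ua$ is an internal $\scal$-$\ast$-algebra precisely when every $\ua(C)$ is one and every $\ua(f)$ is a homomorphism, i.e.\ precisely when $\ua$ is a functor into $\ast$-algebras over $\scal$.

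Next I would treat the norm and completeness, where the real content lies. Keeping the norm as the map $\lVert\cdot\rVert\colon\ua\to\upperR$ of Subsection~\ref{sub:CStarAlg}, evaluation at each $C$ yields a norm on $\ua(C)$, with submultiplicativity and the C*-identity inherited fibrewise; compatibility of these norms across the diagram is automatic, since a $\ast$-homomorphism of C*-algebras is always contractive. The non-geometric ingredient is completeness, which I would unwind in the semantics. Because the natural numbers object $\underline{\mathbb{N}}$ is constant, an internal sequence at stage $C$ is just a sequence in $\ua(C)$, and its pushforward along $f\colon C\to D$ is the image sequence in $\ua(D)$; contractivity of $\ua(f)$ then shows that being internally Cauchy at $C$ is the same as being Cauchy in $\ua(C)$. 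Since existentials are computed pointwise, the internal existence of a limit at stage $C$ is the existence of an honest limit in $\ua(C)$, so internal completeness holds iff every fibre $\ua(C)$ is Cauchy-complete; the $\scal$-action then extends fibrewise to the usual action of $\mathbb{C}$. Assembling the three ingredients gives the functor characterisation in both directions.

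The clauses on commutativity and unitality drop out of the same analysis and illustrate the method. Commutativity is $\forall a,b\,(ab=ba)$, and the forward-looking $\forall$ forces it to hold in every $\ua(D)$, so it is internal iff each fibre is commutative. Unitality is $\exists u\,\forall a\,(ua=au=a)$: the pointwise $\exists$ supplies a unit $u_{C}\in\ua(C)$, and the forward-looking $\forall$ requires its image to be a unit in each later $\ua(D)$, whence by uniqueness of units each $\ua(C)$ is unital and each $\ua(f)$ preserves the unit --- exactly the asymmetry in the statement, and the reason unit-preservation is an extra hypothesis rather than an automatic one. The main obstacle is the completeness clause: being non-geometric it lies outside the scope of Lemma~\ref{lem:functormodel}, so one must chase the universal quantifiers concealed in ``Cauchy'' and ``convergent'' through the forcing relation and rely on contractivity of the transition maps to see that fibrewise completeness is both necessary and sufficient.
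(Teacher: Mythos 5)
Your proposal is correct and follows essentially the same route as the paper's proof: the equational $\scal$-$\ast$-algebra and semi-norm structure is handled geometrically via Lemma~\ref{lem:functormodel} (with contractivity of the transition maps being exactly what makes the internal semi-norm $\underline{N}\subseteq\ua\times\underline{\mathbb{Q}^{+}}$ a well-defined subobject), while the non-geometric norm and completeness axioms are unwound through presheaf semantics, the forward-looking $\forall$ and $\Rightarrow$ collapsing to fibrewise conditions in each $\ua(C)$ precisely because the transition maps are contractions. The one point the paper makes explicit that you leave tacit is that the axiom of dependent choice holds in any presheaf topos, which is what licenses phrasing internal completeness in terms of sequences indexed by the constant object $\underline{\mathbb{N}}$ in the first place.
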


\begin{proof}
It follows from Lemma~\ref{lem:functormodel} and the discussion
in Subsection~\ref{sub:bohr} that a semi-normed $\ast$-algebra over $\underline{\mathbb{C}}$ in $[\mathcal{C},\Set]$
is equivalent to a functor $\ua:\mathcal{C}\to\Set$, such that each $\ua(C)$ is a semi-normed $\ast$-algebra over $\mathbb{C}$, and,
for each arrow $f:D\to C$ in $\mathcal{C}$, the function $\ua(f):\ua(D)\to\ua(C)$ is a $\ast$-homomorphism such that $\Vert\ua(f)(a)\Vert_{D}\leq\Vert a\Vert_{C}$.
We used $\Vert\cdot\Vert_{C}$ to denote the semi-norm on $\ua(C)$. The internal semi-norm $\underline{N}$ is submultiplicative and satisfies the C*-property
iff each semi-norm $\Vert\cdot\Vert_{C}$ is submultiplicative and satisfies the C*-property $\Vert a^{\ast}a\Vert_{C}=\Vert a\Vert_{C}^{2}$.

Recall that the semi-norm $\underline{N}$ of $\ua$ is defined as a subobject of $\ua\times\underline{\mathbb{Q}^{+}}$.
The internal semi-norm is connected to the external semi-norms by the identities
\begin{equation} \label{equ:norm}
\underline{N}(C)=\{(a,q)\in\ua(C)\times\mathbb{Q}^{+}\mid\Vert a\Vert_{C}<q\},
\end{equation}
\begin{equation} \label{equ:normie}
\Vert\cdot\Vert_{C}:\ua(C)\to\mathbb{R}^{+}_{0},\ \ \Vert a\Vert_{C}=\text{inf}\{q\in\mathbb{Q}^{+}\mid(a,q)\in\underline{N}(C)\}.
\end{equation}
Note that the fact that $\ast$-homomorphisms are contractions, in the sense that
$\Vert\ua(f)(a)\Vert_{C}\leq\Vert a\Vert_{C}$,
precisely states that $\underline{N}$ defined by (\ref{equ:norm}) is a well-defined subobject of $\ua\times\underline{\mathbb{Q}^{+}}$. The semi-norm $\underline{N}$ is a norm iff it satisfies the axiom
\begin{equation} \label{equ:norm0}
(\forall q\in\underline{\mathbb{Q}^{+}}\ \ (a,q)\in\underline{N})\Rightarrow(a=\underline{0}).
\end{equation}
By the rules of presheaf semantics, externally, this axiom translates to: for each $C\in\mathcal{C}$ the semi-norm $\Vert\cdot\Vert_{C}$ is a norm.

Completeness can be checked in the same way as in \cite{qtopos}, because the axiom of dependent choice works in any presheaf topos. For
completeness, we thus need to check the axiom
\begin{align*}
\forall f\in\ua^{\underline{\mathbb{N}}} & (\ (\forall n\in\underline{\mathbb{N}}\ \forall
m\in\underline{\mathbb{N}}\ (f(n)-f(m),2^{-n}+2^{-m})\in\underline{N}) \nonumber \\
& \ \Rightarrow(\exists a\in\ua\ \forall n\in\underline{\mathbb{N}} (a-f(n),2^{-n})\in\underline{N})\ ).
\end{align*}
Note that for any object $C\in\mathcal{C}$, the elements of $\ua^{\underline{\mathbb{N}}}(C)$ correspond
exactly to sequences $(a_{n})_{n\in\mathbb{N}}$ in $\ua(C)$. By presheaf semantics, the axiom for completeness
holds iff for every object $C\in\mathcal{C}$ and any sequence $(a_{n})_{n\in\mathbb{N}}$ in $\ua(C)$, if
\begin{equation*}
C\Vdash  (\forall n\in\underline{\mathbb{N}}\ \forall m\in\underline{\mathbb{N}}\
(a_{n}-a_{m},2^{-n}+2^{-m})\in\underline{N}),
\end{equation*}
then
\begin{equation*}
C\Vdash (\exists a\in\ua\ \forall n\in\underline{\mathbb{N}}\  (a-a_{n},2^{-n})\in\underline{N}).
\end{equation*}
This can be simplified by repeated use of presheaf semantics, and the identity
\begin{equation*}
\Vert\ua(f)(a_{n})-\ua(f)(a_{m})\Vert_{D}=\Vert\ua(f)(a_{n}-a_{m})\Vert_{D}\leq\Vert
a_{n}-a_{m}\Vert_{C},
\end{equation*}
where $f:C\to D$ is any arrow. In the end, the axiom of completeness simplifies to the statement that given an object
$C\in\mathcal{C}$ and any sequence $(a_{n})_{n\in\mathbb{N}}$ in $\ua(C)$ such that for any pair
$n,m\in\mathbb{N}$ we have $(a_{n}-a_{m},2^{-n}+2^{-m})\in\underline{N}(C)$, there exists an element
$a\in\ua(C)$ such that for every $n\in\mathbb{N}$, $(a-a_{n},2^{-n})\in\underline{N}(C)$. By definition of
$\underline{N}$ this simply states that each $\ua(C)$ is complete with respect to the norm
$\Vert\cdot\Vert_{C}$. This completes the proof that C*-algebras in $[\mathcal{C},\Set]$ are equivalent to functors $\mathcal{C}\to\mathbf{CStar}$.
\end{proof}

Proposition~\ref{prp:internal} makes it easy to calculate the spectrum $\uS_{\ua}$ of any
commutative unital C*-algebra $\ua$ in any functor category,
using the methods of Subsection~\ref{sub:calcspec1}.
The first step is the construction of the distributive lattice $\underline{L}_{\ua}$,
which we will simply denote as $\underline{L}$.
This is done as in Subsection~\ref{sub:Gelf}.
The construction is geometric, based on the G*-algebra structure of $\ua$,
and so from the discussion in Section~\ref{sec:geo} we can derive $\underline{L}$:
\begin{equation} \label{equ:latt}
\underline{L}:\mathcal{C}\to\Set,\ \ \underline{L}(C)=L_{\ua(C)},
\end{equation}
\begin{equation*}
\underline{L}(f) = L_{\ua(f)}:L_{\ua(C)}\to L_{\ua(D)}\text{.}
\end{equation*}

As in Subsection~\ref{sub:calcspec1}, we can now describe the spectrum $\uS$
in various ways.
To start with, it is described externally as a localic geometric morphism
$\Sigma\to [\mathcal{C},\Set]$.
We must keep in mind that if $\mathcal{C}$ is not a poset,
then neither $\Sigma_{\ua}$ nor $[\mathcal{C},\Set]$ is a localic topos:
they classify non-propositional theories and are \emph{generalized} spaces in
the sense of Grothendieck.
However, the bundle $\Sigma_{\ua} \to[\mathcal{C},\Set]$ is still a \textbf{localic bundle},
because it is a localic geometric morphism~\cite[A4.6.1]{Elephant1}.
Its fibres are locales.
This all follows because it arises from an internal locale in $[\mathcal{C},\Set]$.

To make clear the analogy with the localic case, we shall use the same notation for
a locale as for its topos of sheaves.
For example, 1 will often denote the topos of sets, and $\sier$ the topos
$\Sh\sier$, whose objects are functions.
In practice this does not cause confusion, but one should refrain from asking whether
the locale actually ``is'' the frame or the topos.
Rather, one should think that a locale is described equally well by its frame of opens
or its topos of sheaves.
Note that geometric morphisms between localic toposes are equivalent to maps between
the locales, and we shall frequently refer to geometric morphisms in general as
\emph{maps} between the toposes.

For the rest of this subsection we shall let $\mathcal{C}$ be a small category,
and let $\ua$ be a C*-algebra in $[\mathcal{C},\Set]$.
We write $p: \Sigma_{\ua}\to [\mathcal{C},\Set]$ for the localic bundle
corresponding to the internal spectrum $\uS_{\ua}$.

\begin{theorem}
\label{thm:RSpecExternalPointsFunctor}
  (Analogue of Theorem~\ref{thm:RSpecExternalPoints}.)
  The points of $\Sigma_{\ua}$ can be geometrically described as the pairs
  $(F,x)$ where $F$ is a flat presheaf over $\mathcal{C}$ and
  $x$ is a subset of
  $\coprod_{(C,u)\in (\int F)_{0}}L_{C}$
  satisfying the following properties:
  \begin{enumerate}
  \item
    If $f: (C,u)\to (D,v)$ in $\int F$ then $(C,u,a)\in x$ iff $(D,v,L_{f}(a))\in x$.
  \item
    If $(C,u)$ is an object in $\int F$ then $\{a\in L_{C}\mid(C,u,a)\in x\}$
    is a regular prime filter in $L_{C}$.
  \end{enumerate}
\end{theorem}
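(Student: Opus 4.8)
The plan is to replay the proof of Theorem~\ref{thm:RSpecExternalPoints}, replacing the ideal $I$ of $\cC{A}$ by a flat presheaf $F$ and the principal ideals $\downset C$ by the representables indexing $\int F$. By the bundle picture of Subsection~\ref{sub:geometricmath}, a point of $\Sigma_{\ua}$ is a pair $(F,x)$ where $F$ is a point of the base $[\mathcal{C},\Set]$ and $x$ is a point of the fibre $F^{\ast}\Sigma_{\ua}$. By the discussion following Definition~\ref{defn:flatPresheaf}, the points of $[\mathcal{C},\Set]$ are exactly the flat presheaves $F$; and by Theorem~\ref{thm:bundle} the fibre $F^{\ast}\Sigma_{\ua}$ is the spectrum of the G*-algebra $F^{\ast}\ua$, so that $x$ is a regular prime filter of $F^{\ast}\underline{L}$.

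First I would identify this fibre lattice as a filtered colimit. Since $F$ is flat, its category of elements $\int F$ is filtered and $F$ is the filtered colimit of the representables $\mathcal{Y}(C)$ indexed by $\int F$, so the inverse image $F^{\ast}$ is the stalk functor $\mathop{\mathrm{colim}}_{(C,u)\in\int F}(-)(C)$. As $A\mapsto L_{A}$ is a geometric construction, it is preserved by the inverse image functor $F^{\ast}$, giving
\[
  F^{\ast}\underline{L}\;\cong\;\mathop{\mathrm{colim}}_{(C,u)\in\int F}L_{C}\text{,}
\]
the analogue of $L_{I}\cong\mathop{\mathrm{colim}}_{C\in I}L_{C}$. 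Every element of this colimit is the image of some triple $(C,u,a)$ with $(C,u)\in(\int F)_{0}$ and $a\in L_{C}$, so the filter $x$ is pinned down by the subset $\{(C,u,a)\mid a\text{ maps into }x\}$ of $\coprod_{(C,u)}L_{C}$. Condition~(1) is exactly the compatibility of this subset with the transition maps $L_{f}$ of the colimit; conversely, since in the filtered $\int F$ any identification $(C,u,a)\sim(D,v,b)$ is witnessed by a cospan into a common object $(E,w)$, condition~(1) ensures the subset descends to a well-defined subset of $F^{\ast}\underline{L}$.

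It then remains to translate the regular-prime-filter property into condition~(2). The finitary filter and primeness conditions transfer fibrewise by filteredness: any finite family of elements of the colimit is represented at a single object $(C,u)$, so meets, joins and the order are computed in $L_{C}$, and $x$ is a prime filter iff each set $\{a\in L_{C}\mid(C,u,a)\in x\}$ is one. As in Theorem~\ref{thm:RSpecExternalPoints}, the delicate point -- and the one I expect to be the main obstacle -- is regularity. A priori, regularity of $x$ on the colimit only supplies, given $(C,u,a)\in x$, a morphism $f:(C,u)\to(D,v)$ in $\int F$ and some $b\wi L_{f}(a)$ with $(D,v,b)\in x$, i.e.\ the witness lives at a larger stage. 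Here Lemma~\ref{lemma:WellInsideInc} is the crucial step: it produces $a'\wi a$ in $L_{C}$ with $b\leq L_{f}(a')$, whence $(C,u,a')\in x$, so regularity already holds fibrewise at $C$. This yields condition~(2), and reversing each implication reconstructs a point $(F,x)$ from any such pair, giving the stated correspondence.
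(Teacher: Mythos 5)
Your proposal is correct and follows essentially the same route as the paper, whose proof simply says to replay Theorem~\ref{thm:RSpecExternalPoints} with the ideal $I$ replaced by a flat presheaf $F$ written as a filtered colimit of representables over $\int F$; you have spelled out exactly the intended substitutions, including the key use of geometricity to get $F^{\ast}\underline{L}\cong\mathop{\mathrm{colim}}_{(C,u)\in\int F}L_{C}$ and of Lemma~\ref{lemma:WellInsideInc} to pull regularity witnesses back to the fibre at $(C,u)$.
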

\begin{proof}
The proof is essentially the same as that of Theorem~\ref{thm:RSpecExternalPoints},
once one has taken on the fact (Subsection~\ref{sub:toppreshsh}, just below Definition~\ref{defn:flatPresheaf}) that the points
of $[\mathcal{C},\Set]$ are the flat presheaves over $\mathcal{C}$ and that each
is a filtered colimit of representables $\mathcal{Y}(C)$.
\end{proof}

\begin{theorem}\label{thm:RSpecInternalOpensFunctor}
  (Analogue of Theorem~\ref{thm:RSpecInternalOpens}.)
  The frame of opens of $\uS_{\ua}$ has internal localic form
  $\underline{\sier}^{\uS_{\ua}} = \underline{\RIdl}( \underline{L}_{\ua})$.
  Externally, the points of the bundle locale $\sier ^{\uS_{\ua}}$ are pairs $(F,U)$,
  where $F$ is a flat presheaf of
  $\mathcal{C}$ and $U$ is a subset of $\coprod_{(C,u)\in(\int F)_{0}}L_{C}$
  satisfying the following properties:
  \begin{enumerate}
  \item
    If $f:(C,u)\to (D,v)$ in $\int F$, then $(C,u,a)\in U$ iff $(D,v,L_{f}(a))\in U$.
  \item
    If $(C,u)$ is an object in $\int F$, then $\{a\in L_{C}\mid(C,u,a)\in U\}$
    is an ideal in $L_{C}$.
  \item
    If $(C,u,a)\in U$, then there is some $f: (C,u)\to (D,v)$ in $\int F$
    and some $(D,v,b)\in U$ with $L_{f}(a)\wi b$.
  \end{enumerate}
\end{theorem}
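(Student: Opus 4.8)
The plan is to follow the template established by Theorem~\ref{thm:RSpecExternalPoints} and its functor-category analogue Theorem~\ref{thm:RSpecExternalPointsFunctor}, but now for the exponential bundle $\sier^{\uS_{\ua}}$ rather than the spectrum itself. The first sentence of the statement is immediate from Theorem~\ref{thm:RSpecInternalOpensFunctor}'s own first claim (or from Theorem~\ref{thm:RSpecLocComp} internally), so the work is entirely in identifying the external points. First I would recall that a point of the localic bundle $\sier^{\uS_{\ua}}$ over $[\mathcal{C},\Set]$ is, by the bundle-picture discussion of Subsection~\ref{sub:geometricmath}, a pair $(F,-)$ where $F$ is a point of the base, i.e.\ a flat presheaf over $\mathcal{C}$, together with a point of the fibre $F^{*}\sier^{\uS_{\ua}}$. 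By geometricity of exponentiation \cite[Sec.\ 10.3]{PPExp}, this fibre is $\sier^{(F^{*}\uS_{\ua})} \cong \RIdl(F^{*}\underline{L}_{\ua})$, so a fibre point is exactly a rounded $\wi$-ideal of the stalk lattice $L_{F}$.

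Next I would use that $F$, as a point of $[\mathcal{C},\Set]$, is a filtered colimit of representables indexed by its category of elements $\int F$ (Subsection~\ref{sub:toppreshsh}), and that the geometric construction of $\underline{L}_{\ua}$ commutes with this filtered colimit, giving
\[
  L_{F} \cong \mathop{\mathrm{colim}}_{(C,u)\in\int F} L_{C}\text{.}
\]
Every element of $L_{F}$ is therefore the image of some $(C,u,a)$ with $(C,u)\in(\int F)_{0}$ and $a\in L_{C}$, so specifying a subset $U$ of $L_{F}$ amounts to specifying a subset of $\coprod_{(C,u)}L_{C}$, with condition (1) exactly encoding that the subset is well defined on the colimit (equality in the filtered colimit is generated by $(C,u,a)=(D,v,L_{f}(a))$ along arrows $f$ of $\int F$). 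Condition (2) then says each stalk-restriction is an ideal, i.e.\ $U$ is an ideal of $L_{F}$, and condition (3) is the roundedness clause for a $\wi$-ideal, transcribed through the colimit: $a\in U$ requires some $b\in U$ with $a\wi b$, and well-insideness $L_{f}(a)\wi b$ must be witnessed at some stage $(D,v)$ reachable by an arrow of $\int F$.

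The delicate step, and the one flagged in Theorem~\ref{thm:RSpecInternalOpens}, is precisely condition (3): unlike the regular-prime-filter case of Theorem~\ref{thm:RSpecExternalPointsFunctor}, here Lemma~\ref{lemma:WellInsideInc} does \emph{not} apply to let us reflect the well-inside witness back down to the original stage $C$. The reason is that $\wi$ in a filtered colimit of lattices need not be computed stagewise: $a\wi b$ may hold in $L_{F}$ without being witnessed in any single $L_{C}$, so roundedness genuinely has to be expressed as an existential over arrows of $\int F$ moving to a later stage $(D,v)$. So the main obstacle is to argue carefully that the colimit description of $\wi$-roundedness is faithfully captured by clause (3) as stated, rather than by a naive stagewise version; I would handle this by interpolativity of $\wi$ and the fact that the filtered colimit $L_{F}$ inherits the continuous-information-system structure, checking that a rounded $\wi$-ideal of $L_{F}$ corresponds bijectively to a family satisfying (1)--(3). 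As a sanity check I would note, as the proof of Theorem~\ref{thm:RSpecInternalOpens} does, that when $\int F$ has a terminal object the description collapses to a single $\wi$-ideal of the corresponding stalk lattice, recovering an open of the fibre spectrum.
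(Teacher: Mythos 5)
Your proposal is correct and follows essentially the paper's own route: the paper disposes of this theorem with ``essentially the same as for Theorem~\ref{thm:RSpecInternalOpens}'', which in turn applies Theorem~\ref{thm:RSpecLocComp} and repeats the filtered-colimit argument of Theorem~\ref{thm:RSpecExternalPoints} ($L_{F}\cong\mathop{\mathrm{colim}}_{(C,u)\in\int F}L_{C}$, conditions (1)--(2) encoding colimit equality and the stagewise ideal property), with exactly your flagged point that Lemma~\ref{lemma:WellInsideInc} does not apply, so roundedness in clause (3) must quantify over later stages of $\int F$; your representable sanity check matches the paper's closing remark as well. One minor gloss to correct: $a\wi b$ between two given elements of $L_{F}$ \emph{is} witnessed at some single stage (the witness $y$ and the two equations defining $\wi$ are finitary, so filteredness pushes everything to a common $L_{D}$); what genuinely fails is reflecting the roundedness witness back to the original stage $(C,u)$, since Lemma~\ref{lemma:WellInsideInc} runs in the wrong direction for ideals --- but your clause (3) and its justification via the existential over arrows of $\int F$ are exactly right regardless.
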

\begin{proof}
  Essentially the same as for Theorem~\ref{thm:RSpecInternalOpens}.
\end{proof}

At this point in Subsection~\ref{sub:calcspec1} we moved on to
giving an explicit description of the frame $\opens\Sigma_{\ua}$.
Here we must be more careful, since in general $\Sigma_{\ua}$ is a non-localic topos.
Any topos map (geometric morphism) has a hyperconnected-localic factorization~\cite[A4.6.1]{Elephant1},
and by applying this to a map $\mathcal{E}\to 1$ we see that $\mathcal{E}$ has a
\emph{localic reflection} $\mathcal{L}(\mathcal{E})$.
Its opens are the maps $\mathcal{E}\to\sier$.
In other words, since $\sier$ classifies subsingletons,
the opens of $\mathcal{L}(\mathcal{E})$ are the subobjects of 1 in $\mathcal{E}$.
If $\mathcal{E}$ is exponentiable as a topos (as is the case for $[\mathcal{C},\Set]$),
then the opens of $\mathcal{L}(\mathcal{E})$ are the points of $\sier^{\mathcal{E}}$
and can be calculated immediately by the method of Theorem~\ref{prp:extspec}.

\begin{theorem} \label{prp:extspecFunctor}
(Analogue of Theorem~\ref{prp:extspec}.)
  The opens of $\Sigma_{\ua}$
  (equivalently: the opens of the localic reflection $\mathcal{L}(\Sigma_{\ua})$)
  can be geometrically described as the elements
  $U\in\prod_{C\in\mathcal{C}_{0}}\Sigma_{\ua(C)}$
  such that if $f:C\to D$ is a morphism in $\mathcal{C}$, then
  $\Sigma^{\ast}_{\ua(f)}(U_{C})\leq U_{D}$.
\end{theorem}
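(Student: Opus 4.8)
The plan is to follow the strategy announced in the paragraph immediately preceding Theorem~\ref{prp:extspecFunctor}, reducing the computation of the opens of the (possibly non-localic) topos $\Sigma_{\ua}$ to the points of an exponential, exactly as in the proof of Theorem~\ref{prp:extspec} but with the general exponentiability machinery of Theorem~\ref{thm:expexp} replacing the localic Theorem~\ref{thm:locexpexp}. The opens of the localic reflection $\mathcal{L}(\Sigma_{\ua})$ are the subobjects of $1$ in $\Sigma_{\ua}$, equivalently the maps $\Sigma_{\ua}\to\sier$, equivalently (since $[\mathcal{C},\Set]$ is exponentiable as a topos) the \emph{points} of $\sier^{\Sigma_{\ua}}$. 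So the first step is to invoke Theorem~\ref{thm:expexp} with $\mathcal{C}$ the 2-category of toposes, $X=[\mathcal{C},\Set]$ the exponentiable base, $p:\Sigma_{\ua}\to[\mathcal{C},\Set]$ the localic bundle, and $Z=\sier$, to conclude that $\sier^{\Sigma_{\ua}}$ exists and that its points are the global sections of the bundle $q:\sier^{\uS_{\ua}}\to[\mathcal{C},\Set]$ whose external description was computed in Theorem~\ref{thm:RSpecInternalOpensFunctor}.

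Next I would identify those global sections concretely. A section is a map $\sigma:[\mathcal{C},\Set]\to\sier^{\uS_{\ua}}$ splitting $q$, i.e.\ an internal point of $\underline{\sier}^{\uS_{\ua}}=\underline{\RIdl}(\underline{L}_{\ua})$ in $[\mathcal{C},\Set]$, and so by Lemma~\ref{lem:functormodel} (the functor-category fibrewise principle) such a section is determined by a $\mathcal{C}_{0}$-indexed family $U=(U_{C})_{C\in\mathcal{C}_0}$ together with a compatibility condition across the arrows of $\mathcal{C}$. Fibrewise, over the representable point $\mathcal{Y}(C)$, the fibre of $\sier^{\uS_{\ua}}$ is $\sier^{\Sigma_{\ua(C)}}$, whose points are the opens of $\Sigma_{\ua(C)}$; thus each $U_C$ is an open of $\Sigma_{\ua(C)}$ (equivalently a $\wi$-ideal of $L_{\ua(C)}$, matching the fibrewise form of Theorem~\ref{thm:RSpecInternalOpensFunctor}). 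For the compatibility, I would feed each arrow $f:C\to D$ of $\mathcal{C}$ through the specialization analysis: by Theorem~\ref{thm:RIdlopfib} the bundle is an opfibration, so by Proposition~\ref{prp:opfibspec} monotonicity of the section along $f$ amounts to $\RIdl(L_{\ua(f)})(U_C)\subseteq U_D$, and the calculation at the end of Theorem~\ref{thm:RSpecInternalFrame} identifies $\RIdl(L_{\ua(f)})$ with $\Sigma^{\ast}_{\ua(f)}$. This gives precisely the stated condition $\Sigma^{\ast}_{\ua(f)}(U_{C})\leq U_{D}$.

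The remaining task is purely to transcribe this into the language of the statement and confirm that the two descriptions of the opens agree: the global points of $\sier^{\Sigma_{\ua}}$ are the opens of $\mathcal{L}(\Sigma_{\ua})$, and these are the opens of $\Sigma_{\ua}$ in the sense intended (subobjects of $1$). Since every construction used---exponentiation of locales, the opfibration structure, the lattice $\underline{L}_{\ua}$ and its transition maps---is geometric, the description transports correctly along the points $\mathcal{Y}(C)$, which by the justification of Lemma~\ref{lem:functormodel} are plentiful enough to detect the family and its compatibility.

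\textbf{The main obstacle} I anticipate is not the fibrewise identification of $U_C$, which is routine given Theorem~\ref{thm:RSpecInternalOpensFunctor}, but rather getting the \emph{direction and form} of the cross-fibre compatibility condition exactly right. In the poset case of Theorem~\ref{thm:RSpecInternalFrame} one had the luxury of the inclusion-indexed family and could appeal directly to the opfibrational specialization; here $\mathcal{C}$ is an arbitrary small category and $f$ need not be monic, so I must check carefully that it is the opfibration (not fibration) half of Proposition~\ref{prp:opfibspec} that applies, yielding a \emph{covariant} condition $\Sigma^{\ast}_{\ua(f)}(U_C)\le U_D$ rather than its contravariant dual, and that the identification $\RIdl(L_{\ua(f)})=\Sigma^{\ast}_{\ua(f)}$ from Theorem~\ref{thm:RSpecInternalFrame} survives when $L_{\ua(f)}$ is a general lattice homomorphism rather than an inclusion-induced one. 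Once the opfibrational bookkeeping is pinned down, the result follows.
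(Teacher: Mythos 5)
Your proposal is correct and follows essentially the same route as the paper: exponentiability (Theorem~\ref{thm:expexp}) identifies the opens with the points of $\sier^{\Sigma_{\ua}}$ and hence with the sections of $\sier^{\uS_{\ua}}\to[\mathcal{C},\Set]$, which are then described fibrewise via Lemma~\ref{lem:functormodel} as families of $\wi$-ideals of the $L_{\ua(C)}$ with the covariant compatibility condition $\Sigma^{\ast}_{\ua(f)}(U_{C})\leq U_{D}$. Your explicit routing of the compatibility through Theorem~\ref{thm:RIdlopfib} and Proposition~\ref{prp:opfibspec} merely unpacks what the paper compresses into ``a homomorphism of points of $\sier^{\uS_{\ua}}$ over $f$'', and your worry about general (non-monic) $f$ is harmless since the identification $\RIdl(L_{\ua(f)})=\Sigma^{\ast}_{\ua(f)}$ in Theorem~\ref{thm:RSpecInternalFrame} nowhere uses injectivity.
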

\begin{proof}
  Using exponentiability as in Theorem~\ref{prp:extspec}, we can calculate the
  opens as the points of $\sier^{\Sigma_{\ua}}$ and thence as the sections of
  $\sier^{\uS_{\ua}}\to [\mathcal{C},\Set]$.
  Sections are described using Lemma~\ref{lem:functormodel}.
  For each object $C$ of $\mathcal{C}$ we require a $\wi$-ideal of $L_{\ua(C)}$,
  hence an element of $\opens\Sigma_{\ua(C)}$.
  For each morphism $f:C\to D$ we require a homomorphism of points of
  $\sier^{\uS_{\ua}}$ over $f$, and the homomorphism property comes down to
  the condition $\Sigma^{\ast}_{\ua(f)}(U_{C})\leq U_{D}$.
\end{proof}

For completeness, we sketch how one might describe the full topos $\Sh(\Sigma_{\ua})$),
i.e.\ the sheaves of $\Sigma_{\ua}$ rather than just its opens (which are not enough, and only describe
the localic reflection). Again one would use exponentiability, but this time to
calculate $\oc^{\Sigma}$ instead of $\sier^{\Sigma}$.
Here, $\oc$ denotes the \textbf{object classifier},
classifying the geometric theory with one sort and no other ingredients.
Its models are just sets (carrying the single sort),
so the maps $\Sigma\to \oc$ are the sheaves of $\Sigma$ (by which we just mean the
objects of the topos, even in this non-localic case).
\cite{ViglasThesis} shows how to give a geometric description of the sheaves over any
stably compact locale when presented by a strong proximity lattice,
and as a special case works for a compact regular locale when presented by a
normal distributive lattice $L$.
The sheaves are, first of all, finitary sheaves over the distributive lattice $L$,
i.e. those presheaves $F$ that satisfy finite instances of the sheaf pasting condition.
In addition, they must satisfy a continuity condition that
$F(a)\cong \mathop{\mathrm{colim}}_{a\wi a'} F(a')$.

We now proceed by analogy with Theorem~\ref{thm:RSpecInternalOpensFunctor},
but replacing $\sier$ by $\oc$.
Internally in $[\mathcal{C},\Set]$ we have a geometric theory describing the sheaves
over the internal locale $\uS_{\ua}$.
This corresponds externally to a bundle (\emph{not} localic)
$\oc^{\uS_{\ua}}\to [\mathcal{C},\Set]$.
We can now describe the points of the external topos $\oc^{\uS_{\ua}}$
as pairs $(F,G)$ where $F$ is a flat presheaf over $\mathcal{C}$ and $G$
is a sheaf (presheaf with finitary pasting and continuity) over
$\mathop{\mathrm{colim}}_{(C,u)\in(\int F)_{0}}L_{\ua(C)}$.
After this, the sheaves of $\Sigma$ can be described as the sections of
$\oc^{\uS_{\ua}}\to [\mathcal{C},\Set]$.

Finally in this subsection, we calculate the internal frame of $\uS_{\ua}$.

\begin{theorem}\label{thm:RSpecInternalFrameFunctor}
  (Analogue of Theorem~\ref{thm:RSpecInternalFrame}.)
  The internal frame $\underline{\opens}(\uS_{\ua})$ is given
  as an object of $[\mathcal{C},\Set]$ by
  \[
    \underline{\opens}(\uS_{\ua})(C) =
      \{U\in\prod_{f:C\to D}\opens\Sigma_{D} \mid
         \text{ if } C \stackrel{f}{\rightarrow} D_{1}\stackrel{g}{\rightarrow} D_{2} \text{ then }
         \rho^{\ast}_{g}(U_{f}) \leq U_{gf}\}
  \]
\end{theorem}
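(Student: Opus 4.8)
The plan is to follow the proof of Theorem~\ref{thm:RSpecInternalFrame} line by line, replacing each poset-specific ingredient ($\downset C$, $\upset C$ and $\Idl(\upset C)$) by its functor-category analogue built from the coslice category under $C$. I would start from Yoneda's Lemma, writing $\underline{\opens}(\uS_{\ua})(C)\cong[\mathcal{C},\Set](\mathcal{Y}(C),\underline{\opens}(\uS_{\ua}))$, where $\mathcal{Y}\colon\mathcal{C}^{op}\to[\mathcal{C},\Set]$ is the Yoneda embedding. Internally $\underline{\opens}(\uS_{\ua})$ is the object of points (discrete coreflection) of $\underline{\sier}^{\uS_{\ua}}=\underline{\RIdl}(\underline{L})$, so such natural transformations are the same as internal locale maps from the discrete locale on $\mathcal{Y}(C)$ to $\underline{\RIdl}(\underline{L})$, which I would analyse through their external description as bundle maps over $[\mathcal{C},\Set]$.

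The next step uses the iterated-forcing computation of Subsection~\ref{sub:toppreshsh}: the local homeomorphism corresponding to the representable $\mathcal{Y}(C)$ has as bundle the presheaf topos $[C/\mathcal{C},\Set]$, where $C/\mathcal{C}$ is the coslice category whose objects are the morphisms $f\colon C\to D$ and whose morphisms $(f\colon C\to D_1)\to(gf\colon C\to D_2)$ are the arrows $g\colon D_1\to D_2$ (the functor analogue of $\upset C$). So I must describe the maps $[C/\mathcal{C},\Set]\to\sier^{\uS_{\ua}}$ over $[\mathcal{C},\Set]$. Applying Lemma~\ref{lem:functormodel} fibrewise, such a map amounts to a family $(U_f)_{f\colon C\to D}$ in which each $U_f$ lies in the fibre of $\sier^{\uS_{\ua}}$ over the representable point $\mathcal{Y}(D)$ of $[\mathcal{C},\Set]$. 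Because $\mathcal{Y}(D)^{\ast}\ua=\ua(D)$, geometricity (Theorem~\ref{thm:bundle}) identifies that fibre with $\sier^{\Sigma_{\ua(D)}}$, whose points are the opens $U_f\in\opens\Sigma_D$; this already yields the product $\prod_{f\colon C\to D}\opens\Sigma_D$ of the statement.

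The main obstacle, exactly as in the poset case, is extracting the compatibility condition that a family $(U_f)$ must satisfy to assemble into a genuine map over the base, together with identifying the transition maps. For this I would invoke Theorem~\ref{thm:RIdlopfib} (the spectrum bundle is an opfibration) and Proposition~\ref{prp:opfibspec}: for a morphism $g\colon D_1\to D_2$ the opfibration fibre map is $\RIdl(L_{\ua(g)})$, and the specialization requirement becomes $\rho^{\ast}_g(U_f)\leq U_{gf}$ once one checks the fibrewise identity $\RIdl(L_{\ua(g)})=\rho^{\ast}_g$, which reduces to the ``meet'' characterization already established at the end of the proof of Theorem~\ref{thm:RSpecInternalFrame}. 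The genuinely new feature relative to the poset case is that the family is indexed by \emph{morphisms} $f\colon C\to D$ rather than objects, since several arrows $C\to D$ may now exist. Finally I would verify that the copresheaf action of $h\colon C\to C'$ is the reindexing $(h\cdot U)_{f'}=U_{f'h}$, which preserves the compatibility condition and agrees with precomposition by $\mathcal{Y}(h)$, so that the description holds as an object of $[\mathcal{C},\Set]$.
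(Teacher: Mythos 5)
Your proposal is correct and follows essentially the same route as the paper's proof: Yoneda plus the discrete coreflection of $\underline{\sier}^{\uS_{\ua}}$, iterated forcing to externalize $\mathcal{Y}(C)$ (your coslice category $C/\mathcal{C}$ is exactly the paper's $\mathcal{C}\ltimes\mathcal{Y}(C)$, whose objects are pairs $(D,f:C\to D)$), Lemma~\ref{lem:functormodel} with Theorem~\ref{thm:RSpecInternalOpensFunctor} to identify each $U_f$ with an open of $\Sigma_{\ua(D)}$, and the opfibration analysis via Proposition~\ref{prp:opfibspec}, Theorem~\ref{thm:RIdlopfib} and the identity $\RIdl(L_{\ua(g)})=\rho^{\ast}_{g}$ carried over from Theorem~\ref{thm:RSpecInternalFrame}. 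The only cosmetic deviations are that the paper phrases the transition condition directly as a homomorphism of models over $g$ (equivalent to your specialization condition) and justifies the fibre identification by representability of the flat presheaf in conditions (2) and (3) of Theorem~\ref{thm:RSpecInternalOpensFunctor} rather than by your appeal to geometricity of the fibre.
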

\begin{proof}
  Just as in Theorem~\ref{thm:RSpecInternalFrame}, we find that
  $\underline{\opens}(\uS_{\ua})(C)$ is the set of maps (geometric morphisms)
  from $\mathcal{Y}(C)$ to $\RIdl L_{\ua}$ over $[\mathcal{C},\Set]$.
  We can calculate the external form of the discrete locale $\mathcal{Y}(C)$
  using iterated forcing (Subsection~\ref{sub:toppreshsh}).
  Its topos is $[\mathcal{C}\ltimes\mathcal{Y}(C),\Set]$,
  where now $\mathcal{Y}(C)$ denotes the discrete internal category.
  From the definition of $\mathcal{Y}$ we see that the objects of
  $\mathcal{C}\ltimes\mathcal{Y}(C)$ are the pairs $(D,f)$ with $f: C\to D$ in
  $\mathcal{C}$,
  and a morphism from $(D_{1},f_{1})$ to $(D_{2},f_{2})$ is a morphism
  $g:D_{1}\to D_{2}$ in $\mathcal{C}$ such that $f_{2} = gf_{1}$.

  Now combining Lemma~\ref{lem:functormodel} with
  Theorem~\ref{thm:RSpecInternalOpensFunctor},
  we see that the maps we are looking for are functors $U$ from
  $\mathcal{C}\ltimes\mathcal{Y}(C)$ to the models as described in
  Theorem~\ref{thm:RSpecInternalOpensFunctor}.
  For each object $f:C\to D$, its image $U_{f}$ must be such that the flat presheaf $F$
  as described in the Theorem is representable (for $D$).
  Its representability allows us to combine conditions (2) and (3)
  to say that we have a $\wi$-ideal in $L_{\ua(D)}$,
  hence an open of $\Sigma_{\ua(D)}$.
  Now for each morphism $g:D\to D'$, with $f'=gf$, we need a homomorphism
  $U_{f}\to U_{f'}$ over $g$.
  In other words, if $a\in U_{f}$ (as $\wi$-ideal of $L_{\ua(D)}$),
  then $L_{\ua(g)}(a)\in U_{f'}$.
  When the $\wi$-ideals of $L_{\ua(D)}$ are viewed as opens of
  $\Sigma_{\ua(D)}$, this is equivalent to saying that
  $U_{f}\leq \rho^{\ast}_{\ua(g)}(U_{f'})$.
  We now see that the internal frame is as described.
\end{proof}

\subsection{Algebraic quantum field theory} \label{examples}

The copresheaf approach is based on algebraic quantum theory in the sense that in this approach quantum theory is described using abstract
C*-algebras. In this section we seek to extend the copresheaf approach, as already suggested in~\cite{qtopos}, to the Haag--Kastler
formalism,
which is an algebraic approach to quantum field theory. Introductions to the Haag--Kastler formalism, or algebraic quantum field theory
(AQFT), can be found in~\cite{araki,Haag:LQP}. In this formalism (where, for the sake of simplicity we consider Minkowski spacetime
$\mathcal{M}$) the physical content of a quantum field theory is described by a net of C*-algebras $O\to\frk{A}(O)$, where $O$ ranges over
certain (open connected causally complete) regions of spacetime. This means that we associate to each region $O$ of spacetime of interest, a
C*-algebra $\frk{A}(O)$. We think of the self-adjoint elements of $\frk{A}(O)$ as the observables that can be measured in the region $O$.
With this in mind, we can make the assumption that if $O_{1}\subseteq O_{2}$, then $\frk{A}(O_{1})\subseteq\frk{A}(O_{2})$. If
$\mathcal{K}(\mathcal{M})$ denotes the set of the spacetime regions of interest, partially ordered by inclusion, then a net of C*-algebras
defines a covariant functor $\frk{A}:\mathcal{K}(\mathcal{M})\to\mathbf{CStar}$. We assume that the algebras $\frk{A}(O)$ are unital for
convenience.

By Proposition~\ref{prp:internal} an AQFT is a C*-algebra $\frk{A}$ internal to $[\mathcal{K}(\mathcal{M}),\Set]$. Note that $\frk{A}$ is
in general not commutative. As for the copresheaf approach we can Bohrify the C*-algebra $\frk{A}$. This means that we make it commutative
by considering it as a copresheaf over the poset of commutative subalgebras. The difference with the copresheaf approach is that the
Bohrification takes place internal in the topos  $[\mathcal{K}(\mathcal{M}),\Set]$, instead of the topos $\Set$. We obtain a commutative
C*-algebra internal to a topos (which in turn is internal to a functor category), and, using the ideas of the preceding paragraphs, we
describe the points of the Gelfand spectrum of this commutative C*-algebra.

Instead of the Haag--Kastler formalism, we could have considered the more general and more recent locally covariant quantum field
theories~\cite{BrunettiFredenhagenVerch}. This amounts to replacing the poset $\mathcal{K}(\mathcal{M})$ by a more complicated category of
manifolds and
embeddings (which is no longer a poset). Although Bohrification of the locally covariant field theories can be described using the same
ideas of Subsection~\ref{sec:functor}, we stick with the Haag--Kastler formalism, as this makes the presentation a bit easier.

An internal unital commutative C*-subalgebra of $\frk{A}$ is simply a subobject $\frk{C}$ of
$\frk{A}$ such that for each $O\in\mathcal{K}(\mathcal{M})$, $\frk{C}(O)$ is a commutative unital
C*-algebra in $\mathbf{Set}$. These internally defined commutative C*-subalgebras form a poset $\mathcal{C}(\frk{A})$ in
$[\mathcal{K}(\mathcal{M}),\mathbf{Set}]$ and we can consider the
(internal) functor category over this poset. By using iterated forcing (see Section~\ref{sub:toppreshsh}),
we can describe this functor category within a functor category using a single Grothendieck topos over $\mathbf{Set}$, given by the
site $\mathcal{K}(\mathcal{M})\ltimes\mathcal{C}(\frk{A})$. In this (composite) topos, the Bohrified net is given
by the functor $(O,\frk{C})\mapsto \frk{C}(O)$, where $O\in\mathcal{K}(\mathcal{M})$ and $\frk{C}$ is
a commutative unital C*-subalgebra of $\frk{A}|_{\uparrow O}$. Before considering Gelfand Duality, we first simplify the topos in which we
are working. Instead of labeling the objects of the base category by subalgebras $\frk{C}$ of $\frk{A}|_{\uparrow O}$, we only concentrate
on the part $\frk{C}(O)$. This motivates using the topos $[\mathcal{P},\mathbf{Set}]$, where the poset $\mathcal{P}$ is defined as follows:
an element $(O,C)\in\mathcal{P}$, consists of an $O\in\mathcal{K}(\mathcal{M})$ and a $C\in\mathcal{C}_{O}:=\mathcal{C}(\frk{A}(O))$, and
the order relation is given by
\begin{equation*}
(O_{1},C_{1})\leq(O_{2}, C_{2})\ \ \text{iff}\ \ O_{1}\subseteq O_{2},\ \ C_{1}\subseteq C_{2}.
\end{equation*}
We are interested in the unital commutative C*-algebra $\ua:\mathcal{P}\to\mathbf{Set}$, $(O,C)\mapsto C$ in the topos $[\mathcal{P},\mathbf{Set}]$. Note that $\ua((O_{1},C_{1})\leq(O_{2}, C_{2}))$ is the inclusion map $C_{1}\hookrightarrow C_{2}$.

Next, we want to compute the points of the locale $\Sigma$, which is the external description of the Gelfand spectrum of $\ua$. An easy way
to do this is by using the reasoning in~\cite{Spitters:SpaceMOSNCA}. As a category, the topos $[\mathcal{P},\mathbf{Set}]$ is equivalent to
the topos $\Sh(\mathcal{P}_{\uparrow})$, where $\mathcal{P}_{\uparrow}$ is the set $\mathcal{P}$, equipped with the Alexandrov upset
topology. As in~\cite{Moer:CtsFibILT} we can find a site $\mathcal{P}\ltimes\uS$ such that $\Sh_{\Sh(\mathcal{P})}(\uS)$, the topos of
sheaves over $\uS$, internal to $\Sh(\mathcal{P}_{\uparrow})$ is equivalent to $\Sh(\mathcal{P}\ltimes\uS)$. The locale $\Sigma$ is the
locale generated by the posite $\mathcal{P}\ltimes\uS$. We use the posite description $\mathcal{P}\ltimes\uS$ in order to find the points.
We start with the functor $\underline{L}$, the distributive lattice object in $[\mathcal{P},\mathbf{Set}]$, given by
\begin{equation*}
\underline{L}:\mathcal{P}\to\mathbf{Set},\ \ \underline{L}(O,C)=L_{C},
\end{equation*}
\begin{equation*}
\underline{L}((O_{1},C_{1})\leq(O_{2}, C_{2})): L_{C_{1}}\to L_{C_{2}},\ [a]_{C_{1}}\mapsto[a]_{C_{2}}.
\end{equation*}
The elements of $\aqftcov$ are triples $(O,C,[a]_{C})$, where $O\in\mathcal{K}(\mathcal{M})$, $C\in\mathcal{C}_{O}$ and $[a]_{C}\in L_{C}$. The order of this poset is given by
\begin{equation*}
(O_{1},C_{1},[a_{1}]_{C_{1}})\leq(O_{2},C_{2},[a_{2}]_{C_{2}}),\ \ \text{iff}\ \ O_{2}\subseteq O_{1},\ C_{2}\subseteq C_{1},\ \ [a_{1}]_{C_{1}}\leq[a_{2}]_{C_{1}}.
\end{equation*}
The poset $\aqftcov$ is equipped with the following covering relation $\cov$, which is inherited from the covering relation $\underline{\cov}$, exploiting the fact that we are working over $\mathcal{P}$. We have a covering $(O,C,[a]_{C})\cov W$ iff for $W_{0}=\{[b]_{C}\in L_{C}\mid (O,C,[b]_{C})\in W\}$, the condition $[a]_{C}\cov W_{0}$ holds in $L_{C}$. Note that the covering relation on $\aqftcov$ is completely described in terms of covering relations on the $L_{C}$.

A point $\sigma$ of the external spectrum $\Sigma$ corresponds to a completely prime filter of $\aqftcov$. Recall that a filter $\sigma$ is a nonempty, upward closed and lower directed subset of $\aqftcov$, and that $\sigma$ is completely prime if it satisfies
\begin{equation*}
(O,C,[a]_{C})\in\sigma\ \ \text{and}\ \ (O,C,[a]_{C})\cov W,\ \ \text{implies}\ \ U\cap\sigma\neq\emptyset.
\end{equation*}

Let $\sigma$ be a point of $\Sigma$. It is straightforward to show that
\begin{equation*}
\mathcal{R}=\{O\in\mathcal{K}(\mathcal{M})\mid\exists C\in\mathcal{C}_{O}, \exists [a]_{C}\in L_{C},\ \text{s.t.}\ (O,C,[a]_{C})\in\sigma\}
\end{equation*}
is an ideal of $\mathcal{K}(\mathcal{M})$. Fix any $O\in\mathcal{R}$ and consider the set
\begin{equation*}
\mathcal{I}_{O}=\{C\in\mathcal{C}_{O}\mid\exists [a]_{C}\in L_{C}\ \text{s.t.}\ (O,C,[a]_{C})\in\sigma\}.
\end{equation*}
For any $O\in\mathcal{R}$, $\mathcal{I}_{O}$ is an ideal of $\mathcal{C}_{O}$. For a pair $O\in\mathcal{R}$ and $C\in\mathcal{I}_{O}$, define
\begin{equation*}
\sigma_{O,C}:=\{[a]_{C}\in L_{C}\mid (O,C,[a]_{C})\in\sigma\}.
\end{equation*}
As in~\cite{Spitters:SpaceMOSNCA}, it can be shown that $\sigma_{O,C}$ is a completely prime filter of $L_{C}$. A completely prime filter
$\sigma_{O,C}$ on $L_{C}$ corresponds to a unique point $\lambda(O,C)$ of the Gelfand spectrum $\Sigma_{C}$.

Next, we show how for different $O\in\mathcal{R}$, $C\in\mathcal{I}_{O}$, the $\lambda(O,C)\in\Sigma_{C}$ are related. Let, for some fixed $O\in\mathcal{R}$, $D\subset C$ in $\mathcal{C}_{O}$. Let $a\in D^{+}$ and assume that $[a]_{C}\in\sigma_{O,C}$. By the order on $\aqftcov$,
\begin{equation*}
(O,C,[a]_{C})\leq(O,D,[a]_{D})\in\sigma,
\end{equation*}
where we used that $\sigma$ is a filter, and therefore it is upward closed. For any $a\in D^{+}$, if $[a]_{C}\in\sigma_{O,C}$, then $[a]_{D}\in\sigma_{O,D}$. The filter $\sigma_{O,D}$ can be viewed as a frame map $\sigma_{O,D}:\opens\Sigma_{C}\to\underline{2}$ mapping the open $X^{D}_{a}=\{\lambda\in\Sigma_{D}\mid\langle\lambda,a\rangle>0\}$ to $1$ iff $\lambda(O,D)\in X^{D}_{a}$, iff $[a]_{D}\in\sigma_{O,D}$. If $\rho_{CD}:\Sigma_{C}\to\Sigma_{D}$ is the restriction map, then $\sigma_{O,C}\circ\rho^{-1}_{CD}:\opens\Sigma_{D}\to 2$ corresponds to the point $\lambda(O,C)|_{D}$. At the level of points of $\Sigma_{D}$, the implication
\begin{equation*}
\forall a\in D^{+},\ \ [a]_{C}\in\sigma_{O,C}\Rightarrow [a]_{D}\in\sigma_{O,D}
\end{equation*}
translates to:
\begin{equation*}
\forall a\in D^{+},\ \ \left(\sigma_{O,C}(X^{C}_{a})=1\right)\Rightarrow\left(\sigma_{O,D}(X^{D}_{a})=1\right).
\end{equation*}
As the $X^{D}_{a}$ form a basis of the Hausdorff space $\Sigma_{D}$, and $\rho^{-1}(X^{D}_{a})=X^{C}_{a}$, this can only mean that $\sigma_{O,D}=\sigma_{O,C}\circ\rho^{-1}_{DC}$. In other words, whenever $D\subseteq C$, one has $\lambda(O,D)=\lambda(O,C)|_{D}$.

Assume that $O'\subset O$ in $\mathcal{K}(\mathcal{M})$ and that $C\in\mathcal{C}_{O'}$. In $\aqftcov$,
\begin{equation*}
\forall [a]_{C}\in L_{C},\ \ (O,C,[a]_{C})\leq(O',C,[a]_{C}).
\end{equation*}
If $[a]_{C}\in\sigma_{O,C}$, then by the filter property of $\sigma$, $[a]_{C}\in\sigma_{O',C}$. We conclude that if $O'\subseteq O$ in $\mathcal{R}$ and $C\in\mathcal{C}_{O'}$, then $\lambda(O',C)=\lambda(O,C)$. Hence:

\begin{theorem}
A point $\sigma$ of $\Sigma$ is described by a triple $(\mathcal{R},\mathcal{I}_{\mathcal{R}},\lambda_{\mathcal{R},\mathcal{I}})$, where:
\begin{itemize}
\item $\mathcal{R}$ is an ideal in $\mathcal{K}(\mathcal{M})$.
\item The function $\mathcal{I}_{\mathcal{R}}$ associates to each $O\in\mathcal{R}$, an ideal $\mathcal{I}_{O}$ of $\mathcal{C}_{O}$ satisfying two conditions. Firstly, if $O_{1}\subseteq O_{2}$, then $\mathcal{I}_{O_{2}}\cap\mathcal{C}_{O_{1}}\subseteq\mathcal{I}_{O_{1}}$. Secondly, if $C_{i}\in\mathcal{I}_{O_{i}}$, where $i\in\{1,2\}$, then there is an $O\in\mathcal{R}$ and a $C\in\mathcal{I}_{O}$ such that $O_{i}\subseteq O$ and $C_{i}\subseteq C$.
\item The function $\lambda_{\mathcal{R},\mathcal{I}}$ associates to each $O\in\mathcal{R}$ and $C\in\mathcal{I}_{O}$, an element $\lambda_{O,C}\in\Sigma_{C}$, such that if $O_{1}\subseteq O_{2}$ and $C_{1}\subseteq C_{2}$, then $\lambda_{O_{1},C_{1}}=\lambda_{O_{2},C_{2}}|_{C_{1}}$.
\end{itemize}
\end{theorem}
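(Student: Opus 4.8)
The plan is to establish a bijection between the completely prime filters $\sigma$ of the posite $\aqftcov$ (which, as recalled above, are exactly the points of $\Sigma$) and the triples $(\mathcal{R},\mathcal{I}_{\mathcal{R}},\lambda_{\mathcal{R},\mathcal{I}})$ of the statement. The forward direction is largely carried out in the discussion preceding the theorem: from $\sigma$ one reads off the ideal $\mathcal{R}\subseteq\mathcal{K}(\mathcal{M})$, the ideals $\mathcal{I}_{O}\subseteq\mathcal{C}_{O}$, the completely prime filters $\sigma_{O,C}\subseteq L_{C}$ and hence the points $\lambda(O,C)\in\Sigma_{C}$, together with the two restriction relations $\lambda(O,D)=\lambda(O,C)|_{D}$ (for $D\subseteq C$ in $\mathcal{C}_{O}$) and $\lambda(O',C)=\lambda(O,C)$ (for $O'\subseteq O$ in $\mathcal{R}$, $C\in\mathcal{C}_{O'}$). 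It therefore remains to verify the two compatibility conditions on $\mathcal{I}_{\mathcal{R}}$, to combine the two restriction relations into the single stated $\lambda$-condition, and to supply the converse.

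First I would dispatch the two conditions on $\mathcal{I}_{\mathcal{R}}$, each a direct consequence of a filter property of $\sigma$. For the first, if $O_{1}\subseteq O_{2}$ and $C\in\mathcal{I}_{O_{2}}\cap\mathcal{C}_{O_{1}}$, pick a witness $[a]_{C}$ with $(O_{2},C,[a]_{C})\in\sigma$; since $(O_{2},C,[a]_{C})\leq(O_{1},C,[a]_{C})$ in $\aqftcov$ and $\sigma$ is up-closed, $(O_{1},C,[a]_{C})\in\sigma$, so $C\in\mathcal{I}_{O_{1}}$. For the second, given $C_{i}\in\mathcal{I}_{O_{i}}$ witnessed by $(O_{i},C_{i},[a_{i}])\in\sigma$, lower directedness of $\sigma$ yields some $(O,C,[a])\in\sigma$ below both, and unpacking the order gives $O_{i}\subseteq O$, $C_{i}\subseteq C$ with $O\in\mathcal{R}$ and $C\in\mathcal{I}_{O}$. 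The combined $\lambda$-condition then follows by chaining: for $O_{1}\subseteq O_{2}$ and $C_{1}\subseteq C_{2}$ one has $\lambda(O_{1},C_{1})=\lambda(O_{2},C_{1})=\lambda(O_{2},C_{2})|_{C_{1}}$, using the second relation and then the first.

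The bulk of the work, and the main obstacle, lies in the converse. Given a triple satisfying all the stated conditions, I would reconstruct
\[
  \sigma=\{(O,C,[a]_{C})\mid O\in\mathcal{R},\ C\in\mathcal{I}_{O},\ [a]_{C}\in\sigma_{O,C}\},
\]
where $\sigma_{O,C}\subseteq L_{C}$ is the completely prime filter corresponding to $\lambda_{O,C}$, and check that $\sigma$ is a completely prime filter of $\aqftcov$. The key bookkeeping fact, used throughout, is that the relation $\lambda_{O',C'}=\lambda_{O,C}|_{C'}$ translates into $\sigma_{O',C'}=L_{C'C}^{-1}(\sigma_{O,C})$, so that membership in the smaller filter is detected by applying the lattice map $L_{C'C}$ and testing in the larger one. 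Up-closure then reduces, via the first $\mathcal{I}$-condition together with downward closure of $\mathcal{R}$ and of each $\mathcal{I}_{O}$, to this translation fact; lower directedness uses the second $\mathcal{I}$-condition to find a common $(O,C)$ and then takes the meet in $\sigma_{O,C}$ of the images $L_{C_{i}C}([a_{i}])$, which lie in $\sigma_{O,C}$ precisely by the translation; and complete primeness is immediate from the definition of the covering relation on $\aqftcov$ (which is computed fibrewise in $L_{C}$) combined with complete primeness of $\sigma_{O,C}$. Finally I would observe that these two assignments are mutually inverse, which is routine once the translation fact is available.
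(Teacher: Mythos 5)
Your proposal is correct, and for the forward direction it follows the paper's route exactly: the paper's ``proof'' of this theorem is precisely the discussion preceding it, which extracts $\mathcal{R}$, the $\mathcal{I}_{O}$, the completely prime filters $\sigma_{O,C}$ of $L_{C}$ and the points $\lambda(O,C)\in\Sigma_{C}$ from a completely prime filter $\sigma$ of the posite $\aqftcov$, and derives the two restriction relations (note that the step from the one-sided implication $[a]_{C}\in\sigma_{O,C}\Rightarrow[a]_{D}\in\sigma_{O,D}$ to the \emph{equality} $\sigma_{O,D}=\sigma_{O,C}\circ\rho^{-1}_{DC}$ uses that the $X^{D}_{a}$ form a basis of the Hausdorff space $\Sigma_{D}$ -- you correctly defer to this). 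Where you go beyond the paper is in two places, and this is genuine added value. First, the paper never verifies the two conditions on $\mathcal{I}_{\mathcal{R}}$; it only remarks \emph{after} the theorem that they ensure $\mathcal{I}=\{(O,C)\mid O\in\mathcal{R},\,C\in\mathcal{I}_{O}\}$ is an ideal of $\mathcal{P}$. Your derivations from up-closure and lower directedness of $\sigma$ are correct, and you handle the contravariant order on $\aqftcov$ (where $(O_{1},C_{1},[a_{1}])\leq(O_{2},C_{2},[a_{2}])$ requires $O_{2}\subseteq O_{1}$ and $C_{2}\subseteq C_{1}$) without slipping. Second, the paper states a correspondence but proves only the extraction direction; your converse -- reconstructing $\sigma$ from a triple via the translation $\sigma_{O',C'}=L_{C'C}^{-1}(\sigma_{O,C})$ and checking the filter axioms and complete primeness fibrewise in $L_{C}$ -- is entirely absent from the paper and is what actually makes the theorem a characterization rather than a one-way description. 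Your sketch of it is sound: up-closure uses the first $\mathcal{I}$-condition together with downward closure exactly as you say, lower directedness uses the second condition plus meets in $\sigma_{O,C}$, and complete primeness is indeed immediate because the covering relation $\cov$ on $\aqftcov$ is defined fibrewise in the $L_{C}$. One small point worth making explicit in your chaining argument $\lambda(O_{1},C_{1})=\lambda(O_{2},C_{1})=\lambda(O_{2},C_{2})|_{C_{1}}$: the middle term is defined only because $C_{1}\in\mathcal{I}_{O_{2}}$, which follows from $C_{1}\subseteq C_{2}\in\mathcal{I}_{O_{2}}$, $C_{1}\in\mathcal{C}_{O_{2}}$ (since $\frk{A}(O_{1})\subseteq\frk{A}(O_{2})$), and downward closure of the ideal $\mathcal{I}_{O_{2}}$.
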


The two conditions in the second bullet point are included to ensure that the set
\begin{equation*}
\mathcal{I}=\{(O,C)\in\mathcal{P}\mid O\in\mathcal{R},C\in\mathcal{I}_{O}\}
\end{equation*}
is an ideal of $\mathcal{P}$. Mathematically, the theorem would look more elegant if it were formulated in terms of ideals of $\mathcal{P}$
instead of using pairs $(\mathcal{R},\mathcal{I}_{\mathcal{R}})$, but that description would miss an important physical point. Namely, a
spacetime point $x\in M$ corresponds to a specific filter of $\mathcal{K}(\mathcal{M})$, consisting of all $O\in\mathcal{K}(\mathcal{M})$
containing $x$. However, a point $\sigma$ of $\Sigma$ is labelled by an ideal $\mathcal{R}$ of $\mathcal{K}(\mathcal{M})$ and not by a
filter. If we want the points of the phase space to be indexed by points of the spacetime $\mathcal{M}$, it might be interesting to look at
the contravariant functor $\underline{\Sigma}:\mathcal{P}^{op}\to\mathbf{Set}$, $(O,C)\mapsto\Sigma_{C}$. Remaining on the topic of applying
topos approaches to quantum theory to algebraic quantum field theory, this functor $\underline{\Sigma}$ is also interesting when we consider
the work by Nuiten on this subject \cite{nuiten}. Nuiten investigates relations between independence conditions on nets of operator algebras
 on the one hand, and sheaf conditions on the corresponding Bohrified functors on the other. As argued in \cite{Wolters/Halvorson}, the
sheaf condition of Nuiten can be viewed as a sheaf condition on the functor $\underline{\Sigma}:\mathcal{P}^{op}\to\mathbf{Set}$ (although
strictly speaking the covering relation which is involved does not satisfy all conditions for the basis of a Grothendieck topology).

\section{Acknowledgements}

The authors would like to thank Klaas Landsman for his comments, which greatly improved this paper,
and also an anonymous referee for his or her comments.
\bibliographystyle{eptcs}
\bibliography{geoBS}
\end{document}